\renewcommand{\wr}{\mathop{\textrm{wr}}}
\newcommand{\Sym}{\mathop{\textrm{Sym}}}
\newcommand{\SL}{\mathop{\textrm{SL}}}
\newcommand{\Aut}{\mathop{\textrm{Aut}}}
\newcommand{\PSL}{\mathop{\textrm{PSL}}}
\newcommand{\Alt}{\mathop{\textrm{Alt}}}
\renewcommand{\H}{\mathop{\mathrm{H}}}
\newtheorem{theorem}{Theorem}[section]
\newtheorem{lemma}[theorem]{Lemma}
\newtheorem{example}[theorem]{Example}
\newtheorem{remark}[theorem]{Remark}
\newtheorem{definition}[theorem]{Definition}
\newtheorem{proposition}[theorem]{Proposition}
\newtheorem{corollary}[theorem]{Corollary}
\begin{document}
\title[$\wedge$-transitive graphs and cartesian decompositions]{$\wedge$-transitive digraphs preserving a cartesian decomposition}
\author[J.~Morris]{Joy Morris} 
\address{Department of Mathematics and Computer Science \\
University of Lethbridge \\
Lethbridge, AB. T1K 3M4. Canada}
\email{joy.morris@uleth.ca}

\author[P. Spiga]{Pablo Spiga}
\address{Pablo Spiga, Dipartimento di Matematica Pura e Applicata,\newline
University of Milano-Bicocca,
Via Cozzi~53, 20126 Milano Italy}  \email{pablo.spiga@unimib.it}

\thanks{This research was supported in part by the National Science
  and Engineering Research Council of Canada} 

\subjclass[2000]{Primary 20B25; Secondary 05E18}

\keywords{$2$-distance transitive, cartesian decomposition, wreath product, tournament} 

\begin{abstract}
In this paper, we combine group-theoretic and combinatorial techniques to study $\wedge$-transitive digraphs admitting a cartesian decomposition of their vertex set. In particular, our approach uncovers a new family of digraphs that may be of considerable interest.
\end{abstract}
\maketitle

\section{Introduction}\label{intro1}
One of the most interesting families of highly symmetric graphs is the family of {\em distance-transitive} graphs. We refer the reader to the survey article~\cite{vanBon} for the current status of the project of classifying these graphs. A major step towards this classification is a theorem of Praeger, Saxl and Yokoyama~\cite{PSY}, which investigates the structure of  distance-primitive graphs:  distance-transitive graphs admitting a group of automorphisms acting primitively on the vertices. (A graph is distance-transitive if its automorphism group acts transitively on ordered pairs of vertices at each fixed distance.) The main tool in~\cite{PSY} is the O'Nan-Scott theorem for finite primitive permutation groups.

%

In this paper, by relaxing the conditions required of a distance-primitive graph in three ways, we discover a hitherto-unstudied family of digraphs ((iii) from Theorem~\ref{main-thm} below).  These digraphs might well provide nice examples or counter-examples to other interesting problems.  Their automorphism groups are not transitive on directed paths between vertices at distance 2 (except when $n=1$), but are transitive on the arcs of the digraph, with a lot of additional symmetry.  Our three relaxations are:
first, we allow directed graphs.  Second, our (di)graphs admit a kind of 2-distance-transitivity, but we do not require any higher distance-transitivity. More precisely, our (di)graphs are transitive on ordered pairs of vertices that are either adjacent, or are non-adjacent but share an out-neighbour. In the case of graphs, this is the same as 2-distance-transitivity.  Third, our (di)graphs do not necessarily admit a group of automorphisms acting primitively on their vertex set. It should be noted that the (di)graphs we consider must admit a group of automorphisms that preserves a cartesian decomposition, and cannot be Cayley (di)graphs on a specified subgroup of their automorphism group (see Section~\ref{intro} and Definition~\ref{def} for a precise statement of our hypothesis).  Additional examples do arise if we allow Cayley (di)graphs; Lemma~\ref{tournamentprel}, for example, shows that Payley tournaments are one such class; cycles (directed or undirected) are another.

In~\cite{PSY}'s analysis of distance-primitive graphs, the authors use the Classification of Finite Simple Groups in two ways: first, in their Proposition 2.4, to deal with primitive groups of product action type, and then (via the Schreier conjecture) to deal with primitive groups of twisted wreath type.  Proposition 2.4 of~\cite{PSY} is about digraphs, and while Classification-free proofs are known in the case where the graphs are undirected (one appears in~\cite{vanBon thesis}), they are not readily available. Neither \cite{PSY} nor \cite{vanBon thesis} claim knowledge of a Classification-free proof in the directed case.  We provide such a proof in this paper.  Naturally, since our paper is about product action, we do not consider the use of the Classification in \cite{PSY}'s analysis of twisted wreath type. 

Our main result is the following.  The notation $\H(m,n)$ is used for the Hamming graph that is isomorphic to the cartesian product of the complete graph $K_m$ with itself $n$ times.
\begin{theorem}\label{main-thm}Let $G$ be a product action type group with base $N$. Suppose that $G$ acts
  transitively on the arcs of the digraph $\Gamma$, and on the pairs of vertices that are non-adjacent but share an out-neighbour. If $\Gamma$ is not a Cayley digraph on $N$ then $\Gamma$ is isomorphic
  to one of:
  \begin{itemize}
  \item[(i)] $\H(m,n)$;
  \item[(ii)] the complement of $\H(m,2)$; or 
  \item[(iii)] one of the graphs $X_q(n)$ in Example~\ref{Ex2}. In this case, $\Gamma$ is a
  digraph.
  \end{itemize}
\end{theorem}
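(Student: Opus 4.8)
The plan is to work entirely on the cartesian decomposition, writing $V\Gamma=\Delta^n$ with $m=|\Delta|$ and $G\le\Sym(\Delta)\wr S_n$ in product action, so that the base $N$ is the coordinatewise-acting direct factor. Fix a base vertex $v=(0,\dots,0)$. By arc-transitivity the out-neighbourhood $\Gamma^+(v)$ is a single orbit of the stabiliser $G_v$, so the first task is to understand how $G_v$ moves the other vertices. To each vertex $u$ I would attach its \emph{support} $\operatorname{supp}(u,v)\subseteq\{1,\dots,n\}$, the set of coordinates in which $u$ and $v$ differ. Since $G$ preserves the decomposition, $G_v$ permutes supports through its image in $S_n$; I would first establish that $G$, and hence $G_v$, acts transitively on the $n$ coordinates, so that supports of a given size fall into few $G_v$-orbits. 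The output of this step is that $\Gamma^+(v)$ consists of vertices all of whose supports have a single common size $s$, together with a prescribed pattern of entries in those coordinates.

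Next I would bring in the hypothesis of transitivity on non-adjacent pairs with a common out-neighbour (the $\wedge$-pairs). Translating this into the coordinate picture, the number and configuration of common out-neighbours of two vertices must depend only on their relative support. Comparing two $\wedge$-pairs with different relative supports should force $s\in\{1,n\}$: either out-neighbours differ from $v$ in exactly one coordinate (the Hamming regime producing (i) and (iii)), or they differ in every coordinate, which the $\wedge$-condition only tolerates when $n=2$, giving the complement of $\H(m,2)$ in (ii). The bulk of the work here is ruling out intermediate support sizes and the case $s=n>2$, by exhibiting two inequivalent $\wedge$-pairs that a single $G$-orbit cannot contain.

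The crux is the local analysis in the regime $s=1$. Here $v\to u$ (with $u$ differing from $v$ only in coordinate $i$) is recorded by a digraph $\gamma$ on $\Delta$, and arc-transitivity of $\Gamma$ together with coordinate-transitivity makes $\gamma$ arc-transitive under the component $H\le\Sym(\Delta)$. The $\wedge$-condition on $\Gamma$ then descends to a homogeneity condition on the common-out-neighbour structure of $\gamma$. I expect this to be the main obstacle: I must show, \emph{without invoking the Classification of Finite Simple Groups}, that such a $\gamma$ is either the complete symmetric graph $K_m$ — yielding the Hamming graph $\H(m,n)$ of (i) — or a proper tournament carrying the strong regularity forced by the $\wedge$-condition. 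The hypothesis that $\Gamma$ is not a Cayley digraph on $N$ enters precisely here: it excludes the tournaments $\gamma$ that are Cayley on the component of $N$ (the Paley tournaments of Lemma~\ref{tournamentprel}, and the directed cycles alluded to in the introduction), isolating the genuinely new non-Cayley tournaments; their $n$-fold product assembles into the family $X_q(n)$ of Example~\ref{Ex2}, which is a digraph rather than a graph.

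Finally I would verify that every surviving case indeed realises one of (i)--(iii) with an appropriate product-action group, and confirm the directed/undirected dichotomy. The Classification-free character of the argument is the payoff of the $\wedge$-hypothesis: the homogeneity it imposes does combinatorially the work that Proposition~2.4 of~\cite{PSY} delegates to the list of $2$-transitive groups. A useful consistency check is that dropping the non-Cayley hypothesis should let the excluded Cayley examples (Paley tournaments, cycles) reappear, exactly as flagged in the introduction.
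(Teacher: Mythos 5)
Your skeleton matches the paper's: split according to the common Hamming distance $s$ between adjacent vertices, reduce the case $s=1$ to an arc-transitive local digraph on $\Delta$, and show $s\ge 2$ forces $n=2$. But there is a genuine gap at what you yourself call the crux. You claim the local digraph is either $K_m$ or a ``proper tournament,'' with the non-Cayley hypothesis excluding the Cayley (Paley) tournaments and leaving ``genuinely new non-Cayley tournaments'' whose products assemble into $X_q(n)$. This cannot work: by Berggren's theorem (Lemma~\ref{tournamentprel}), \emph{every} finite arc-transitive tournament is a Paley tournament, and its automorphism group has regular socle; so under the hypothesis that $T$ is not regular, the tournament case is eliminated entirely, and your argument would conclude that family (iii) is empty. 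The genuine exceptional local digraph $X_q$ of Example~\ref{Ex2} is not a tournament at all: it has $2(q+1)$ vertices, in- and out-valency $q$, and each vertex has exactly one non-neighbour. In the paper's Proposition~\ref{propdim1}, tournaments appear only as the induced structure on in-neighbourhoods $X^-(\delta')$ and as an intermediate case ($|VX|=1+2q$) that is killed precisely by non-regularity of $T$; the surviving object is larger and structurally different from anything your outline allows for.

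Relatedly, the step you flag as the main obstacle is exactly where the paper does its heavy lifting, and your proposal offers no method for it beyond expecting the $\wedge$-homogeneity to ``do the work combinatorially.'' The paper's CFSG-free proof of Proposition~\ref{propdim1} is not purely combinatorial: after the combinatorial phase (showing $|VX|=2(q+1)$, producing the pairing $v\mapsto v^*$, and showing the kernel $C$ of the action on the blocks $\{v,v^*\}$ has order $2$), it invokes Burnside's theorem on $2$-transitive groups, Suzuki's theorem that a $2$-group with a unique involution is cyclic or generalized quaternion, the Gorenstein--Walter and Walter classifications of simple groups with dihedral or abelian Sylow $2$-subgroups, and universal covering groups, to force $T=\SL(2,q)$ and hence $X=X_q$. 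A further, smaller omission: in your $s=n=2$ regime you assert the outcome is the complement of $\H(m,2)$, but the directed subcase with $n=2$ must also be ruled out, which occupies all of Subsection~\ref{dir} of the paper (Lemmas~\ref{one-square-done} and~\ref{two-squares-done}); your proposal does not mention this.
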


In the light of Theorem~\ref{main-thm} and the proof of the main theorem of~\cite{PSY} we see (without appealing to the Classification of Finite Simple Groups) that  if $\Gamma$ is a distance-primitive graph, then either $\Gamma$ is as in Theorem~\ref{main-thm}~(i) and~(ii), or $\Gamma$ is a Cayley graph over a characteristically simple group.

\begin{corollary}$\H(m,2)$ and its complement are the only $2$-arc-transitive graphs admitting a group of automorphisms of product action type.
\end{corollary}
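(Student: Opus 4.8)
The plan is to deduce the statement directly from Theorem~\ref{main-thm}, after checking that $2$-arc-transitivity is a special case of the hypothesis there. First I would record the standard local reformulation: a connected vertex-transitive graph $\Gamma$ is $(G,2)$-arc-transitive exactly when $G$ is arc-transitive and, for a vertex $v$, the stabiliser $G_v$ acts $2$-transitively on the neighbourhood $\Gamma(v)$. In particular, if $G$ is transitive on the $2$-arcs $(u,v,w)$ (paths of length two with $u\neq w$), then it is transitive on arcs, and restricting to those $2$-arcs with $u\not\sim w$ shows that $G$ is transitive on pairs of non-adjacent vertices that share a common neighbour. Since $\Gamma$ is undirected, out-neighbour and neighbour coincide, so $G$ is $\wedge$-transitive and the hypotheses of Theorem~\ref{main-thm} are met.

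Next I would run through the three conclusions of Theorem~\ref{main-thm} together with the Cayley alternative. The digraphs $X_q(n)$ of Example~\ref{Ex2} are genuine directed graphs, whereas a $2$-arc-transitive graph is by definition undirected; hence case (iii) is impossible. This leaves the Hamming graphs $\H(m,n)$, the complements $\overline{\H(m,2)}$, and the case in which $\Gamma$ is a Cayley digraph on the base $N$. I would dispose of the last possibility by arguing that when $\Gamma$ is Cayley on $N$, the characteristically simple group $N$ is a regular \emph{normal} subgroup of $G$; combined with $2$-arc-transitivity this forces the configuration outside product action type (it is the behaviour associated with the twisted wreath situation that the paper deliberately does not treat), so no new $2$-arc-transitive graph of product action type arises this way. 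Only families (i) and (ii) then remain.

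The heart of the matter, and the step I expect to be the main obstacle, is deciding which members of (i) and (ii) are genuinely $2$-arc-transitive by passing to the local action. In $\H(m,n)$ the induced subgraph on $\Gamma(v)$ is a disjoint union of $n$ cliques of size $m-1$, since two neighbours of $v$ are adjacent precisely when they differ from $v$ in the same coordinate; this partition into connected components is preserved by every element of $\Aut(\Gamma)_v$, and therefore by $G_v$. A $2$-transitive group admits no nontrivial invariant partition, so the $2$-transitivity of $G_v$ on $\Gamma(v)$ is a severe constraint on the shape of this clique partition: it immediately eliminates the higher-dimensional Hamming graphs $\H(m,n)$ with $n\geq 3$, leaving the two-dimensional case $\H(m,2)$, and the analogous computation for $\overline{\H(m,2)}$ (whose neighbourhood is $K_{m-1}\times K_{m-1}$) singles out the complementary family. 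Carrying out this local analysis carefully — balancing the invariant-partition obstruction against the primitivity of the base group on $\Delta$ forced by product action type — is precisely where the work lies, and it is what pins the answer down to $\H(m,2)$ and its complement.
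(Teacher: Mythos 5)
Your opening reductions are essentially the paper's implicit ones: $2$-arc-transitivity gives $\wedge$-transitivity, and the digraphs $X_q(n)$ of Example~\ref{Ex2} are excluded because a $2$-arc-transitive graph is undirected. Your handling of the Cayley alternative, however, is muddled: the appeal to ``twisted wreath type'' is beside the point. The reason the Cayley case never arises is simply that Definition~\ref{def}(i) requires $T$ to be transitive but \emph{not} regular on $\Delta$, so $N=T_1\times\cdots\times T_n$ has nontrivial point stabilisers $T_\delta^n$ and therefore cannot act regularly; hence $\Gamma$ cannot be a Cayley digraph on $N$, and the non-Cayley hypothesis of Theorem~\ref{main-thm} is automatic.

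The genuine gap is in your final paragraph, and it is fatal as written. The local test you set up --- $G_v$ must be $2$-transitive on $\Gamma(v)$, and a $2$-transitive group preserves no nontrivial invariant partition --- does not distinguish $n=2$ from $n\geq 3$. Product action type forces $m=|\Delta|\geq 3$ (a transitive non-regular $T$ cannot exist on $2$ points), so in $\H(m,2)$ the neighbourhood of a vertex is two disjoint cliques $K_{m-1}$ with $m-1\geq 2$: it contains both adjacent pairs (inside a clique) and non-adjacent pairs (across the two cliques), so no vertex stabiliser whatsoever is $2$-transitive on it. The same holds for the complement, whose local graph $K_{m-1}\times K_{m-1}$ also contains pairs of both kinds. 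Thus your own criterion eliminates $\H(m,2)$ and its complement exactly as it eliminates $\H(m,n)$ for $n\geq 3$; carried out honestly, your analysis shows that \emph{none} of the graphs in Theorem~\ref{main-thm}(i)--(ii) is $2$-arc-transitive in the standard local sense, contradicting the conclusion you assert. So the step in which the invariant-partition obstruction ``leaves'' $\H(m,2)$ and ``singles out'' the complementary family is not a proof but a statement of the desired answer; the two halves of your paragraph are mutually inconsistent. To prove the corollary one must either read it (as the paper implicitly does, stating it without proof directly after Theorem~\ref{main-thm}) as the assertion that cases (i) with $n=2$ and (ii) are the only possible candidates --- in which case one still owes an argument excluding $\H(m,n)$ with $n\geq 3$ that does not simultaneously kill $n=2$ --- or else make precise a notion of $2$-arc-transitivity under which $\H(m,2)$ and its complement genuinely qualify; your write-up does neither.
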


The hypothesis on  the decomposition of the vertex set of $\Gamma$ as a cartesian product is important. In fact, Li and Seress~\cite{LS} have obtained several intricate examples of $2$-distance transitive graphs $\Gamma$ with $V\Gamma$ not admitting an $\Aut(\Gamma)$-invariant cartesian decomposition. In these remarkable examples $V\Gamma$ has a $\Aut(\Gamma)$-invariant partition $\mathcal{B}$, and the quotient graph $\Gamma/\mathcal{B}$ does admit a cartesian decomposition.

Finally, the definition of ``product action type'' that we use  (see Definition~\ref{def}) is inspired by~\cite{PS}, which is a complete treatment of  permutation groups that preserve a cartesian decomposition.

\section{Notation and basic examples}\label{intro}
Let $H$ be a permutation group acting on the set $\Delta$, let $T$ be a
transitive normal subgroup of $H$ and let $K$ be a transitive subgroup of
the symmetric group $\Sym(n)$ on $\{1,\ldots,n\}$ with $n\geq 2$. We let
$W$ denote the wreath 
product $H\wr K$ acting on the cartesian product $\Omega=\Delta^n$. Thus, for $\sigma\in K$ and $h_1,\ldots,h_n\in H$, the group
element $g=\sigma(h_1,\ldots,h_n)$ of $W$ acts on $(\delta_1,\ldots,\delta_n)\in\Omega$ by    
\[(\delta_1,\ldots,\delta_n)^g=
(\delta_{1^{\sigma^{-1}}}^{h_1},\ldots,\delta_{n^{\sigma^{-1}}}^{h_n}).\] 
In other words, $\sigma$ permutes the $n$ coordinates  and the $n$-tuple $(h_1,\ldots,h_n)$ acts coordinate-wise. 

For $i\in \{1,\ldots,n\}$, we denote by $T_i$ the $i$th
coordinate subgroup of $T^n$, that is, $T_i=\{(t_1,\ldots,t_n)\in
T^n : t_j=1,\,\forall j\in\{1,\ldots,n\}\setminus\{i\}\}$. As $H$ normalizes
$T$, the group $W$ acts by 
conjugation on the set $\{T_1,\ldots,T_n\}$ and the action of $W$ on $\{T_1,\ldots,T_n\}$ is permutation
equivalent to the action of $K$ on $\{1,\ldots,n\}$. Furthermore, the
normal subgroup $N=T_1\times \cdots \times T_n$ of $W$ acts
transitively on $\Omega$.

\begin{definition}\label{def}{\rm We say that $G\leq W$ is of \emph{product action type} with base $N$  if
\begin{enumerate}
\item[(i)]$T$ is not regular on $\Delta$,
\item[(ii)]$N\leq G$, and 
\item[(iii)]the action of $G$ on $\{T_1,\ldots,T_n\}$ is
transitive.
\end{enumerate}}
\end{definition} For each $i\in \{1,\ldots,n\}$, consider the group $G_i=N_G(T_i)$. If $g=\sigma(h_1,\ldots,h_n)\in G_i$, then $i^\sigma=i$ and the function
$\pi_i:G_i\to H$  mapping $g$ to $h_i$  defines a group
homomorphism. As $G$ is transitive on $\{1,\ldots,n\}$, for each
$i,j\in \{1,\ldots,n\}$, the group $H^{\pi_i}$ is conjugate to
$H^{\pi_j}$. In particular, replacing $H$ by the image of $\pi_i$ if
necessary, we may assume that each $\pi_i$ is surjective, for each $i$.

In this paper, we assume that $G$ is of product action type and is a group of automorphisms of a
connected (directed or undirected) graph $\Gamma$ with vertex set
$V\Gamma=\Omega$. We let $A\Gamma$ denote the arcs of $\Gamma$ and, for a
vertex $v$ of $\Gamma$, we let $\Gamma^+(v)$ (respectively
$\Gamma^-(v)$) denote the out-neighbours (respectively in-neighbours)
of $v$ and we write   
\[
A^2_+\Gamma=\{(u,v)\in V\Gamma\times V\Gamma : u,v \textrm{ are 
non-adjacent and }
 \Gamma^+(u)\cap \Gamma^+(v)\neq \emptyset\}\]
(equivalently, $(u,v)\in A^2_+\Gamma$ if $u$ and $v$ are non-adjacent and $u,v\in \Gamma^{-}(w)$, for some $w\in V\Gamma$).

We are concerned with the following kind of action.

\begin{definition}
A group $G$ acts $\wedge$-{\em transitively} on a digraph $\Gamma$ if $G$ acts transitively on $A\Gamma$ and on
$A^2_+\Gamma$. (We also say that $\Gamma$ is $\wedge$-transitive.)
\end{definition}

If $\Gamma$ is undirected, then our
definition coincides with the definition of
$2$-distance-transitive graphs. For digraphs,
this is not the most natural definition of $2$-distance-transitivity, hence we use the term $\wedge$-transitive for this action, a term that was suggested to the first author by Peter Neumann during a discussion of this work.
In generalising our arguments from the undirected case to the directed case, this is the transitivity requirement that most naturally arises.

We have two motivations (aside from
feasibility) for using this definition. 
First, this sort of $2$-distance-transitivity has been previously studied, although not named. It was investigated by Praeger, Saxl and Yokoyama in~\cite{PSY} (see for example~\cite[Proposition~$2.4$]{PSY}). Their
analysis of $G$ and $\Gamma$ heavily depends upon the Classification of Finite Simple
Groups (though they state without proof that they can avoid this in the undirected case). Since our main results generalize theirs in the case of product action, we produce a
CFSG-free proof of this part of their result. 
The combination of arc-transitivity and transitivity on pairs of vertices that share an out-neighbour was also exploited to great effect by Cameron in some of his early work (see for example~\cite{Cam1,Cam2,Cam3}), although since he did not require that the pairs of vertices sharing an out-neighbour be non-adjacent, his condition is stronger than ours and his work does not directly apply to ours. It is nonetheless interesting that he did encounter the family of digraphs that we call $X_q$ in his work~\cite{Cam2}, but immediately discarded them as he was interested only in primitive actions, and did not generalise them to our family $X_q(n)$.
Our second motivation is that our definition of $\wedge$-transitivity covers the
special case where  $G$ acts transitively on each of the three sorts of pairs of vertices at
distance 2, namely $A^2_+\Gamma$, 
\begin{align*}
&\{(u,v)\in V\Gamma\times V\Gamma : u,v \textrm{ are 
non-adjacent and }
 \Gamma^+(u)\cap \Gamma^-(v)\neq \emptyset\}, \textrm{ and}\\
A^2_-\Gamma=&\{(u,v)\in V\Gamma\times V\Gamma : u,v \textrm{ are 
non-adjacent and }
 \Gamma^-(u)\cap \Gamma^-(v)\neq \emptyset\}
 \end{align*}
(which is a very natural definition of $2$-distance-transitivity on digraphs). 
There is one more remark we wish to make in this direction. (Given a digraph $\Gamma$, denote by $\Gamma^{opp}$ the digraph with $V\Gamma^{opp}=V\Gamma$ and with $A\Gamma^{opp}=\{(u,v): (v,u)\in A\Gamma\}$.) If $G$ acts transitively on $A\Gamma$ and on $\{(u,v)\in V\Gamma\times V\Gamma : u,v \textrm{ are 
non-adjacent and }
 \Gamma^-(u)\cap \Gamma^-(v)\neq \emptyset\}$, then our arguments apply immediately to $G$ and to $\Gamma^{opp}$.

We stress that in Definition~\ref{def} we assume that $T$  does not act regularly on $\Delta$; this condition is imposed in order to avoid the case that $\Gamma$ is a Cayley graph on $N$. 

Throughout the rest of the paper, we let $\Delta,n,\Omega,T,H$ and $W$ be as above. Furthermore, let $G$ be a product action type subgroup of $W$ and  let $\Gamma$ be a connected digraph with $\Omega=V\Gamma$ and with $G$ acting $\wedge$-transitively on $\Gamma$.

\begin{remark}\label{GpreservesHammingDist}
{\rm The action of a group $G$ of product action type always preserves the Hamming
distance between vertices.  This is easy to verify, but very important.}
\end{remark}

We fix, once and for all, $\delta$ an element of $\Delta$,
$\alpha=(\delta,\ldots,\delta)\in V\Gamma$ and
$\beta=(\delta_1,\ldots,\delta_n)\in\Gamma^-(\alpha)$.

\begin{remark}\label{action}{\rm
Since $N$ is transitive on $V\Gamma$, we have $G=NG_\alpha$ and, as $N$ acts trivially by conjugation on $\{T_1,\ldots,T_n\}$ and $G$ acts transitively on $\{T_1,\ldots,T_n\}$, we see that $G_\alpha$ acts transitively by conjugation on $\{T_1,\ldots,T_n\}$.}
\end{remark}

\subsection{Structure of the paper. }\label{subsub}Our proof is divided in various cases, depending upon the Hamming distance between $\alpha$ and $\beta$. The case that $\alpha$ and $\beta$ are at Hamming distance $1$ is studied in Section~\ref{dist=1}. In Section~\ref{morethan2}, we study the case that $\alpha$ and $\beta$ are at Hamming distance $\geq 2$. First we show that $n=2$ (in particular, $\alpha$ and $\beta$ are at Hamming distance $2$), then in Subsection~\ref{und} we study the case that $\Gamma$ is undirected, and finally in Subsection~\ref{dir} we study the case that $\Gamma$ is directed.

\section{Examples of $\wedge$-transitive digraphs with product action}\label{examples}

In this section, we explain how to construct the graphs and digraphs that are listed in Theorem~\ref{main-thm}.
First we give the definition of orbital graph. This will be required in the construction of some of the  examples that follow. 

\begin{definition}\label{orbital}{\rm Let $G$ be a transitive permutation group on the set $\Omega$ and let $\alpha$ and $\beta$ be elements of $\Omega$. The {\em orbital graph} $(\beta,\alpha)^G$ is the graph with vertex set $\Omega$ and with arc set $\{(\beta^g,\alpha^g) : g\in G\}$. The group $G$ acts transitively on the arcs of $(\beta,\alpha)^G$, the in-neighbourhood of $\alpha$ is $\beta^{G_\alpha}$ and the out-neighbourhood of $\beta$ is $\alpha^{G_\beta}$.}
\end{definition}

In the next example we describe the Hamming distance and the well-known Hamming graphs.
\begin{example}\label{example-hamming}{\rm  We
    say that  $\omega=(\delta_1,\ldots,\delta_n)$ and
    $\omega'=(\delta_1',\ldots,\delta_n')$ of $\Omega=\Delta^n$ are at
    {\em Hamming distance} $k$ if 
    $\omega$ and $\omega'$ agree in all but $k$ coordinates, that is, $k=|\{i\in
    \{1,\ldots,n\}: \delta_i\neq \delta_i'\}|$.  We denote this by $d_H(\omega,\omega')=k$.

Write $m=|\Delta|$. Let $\H(m,n)$ be the graph with vertex set
$\Omega$ and with $\omega$ 
adjacent to $\omega'$ if $d_H(\omega,\omega')=1$. The group $W=\Sym(\Delta)\wr \Sym(n)$ acts
transitively on the vertices of $\H(m,n)$, the stabilizer in $W$ of the vertex
$\alpha=(\delta,\ldots,\delta)$ of $\H(m,n)$ is
$\Sym(\Delta\setminus\{\delta\})\wr \Sym(n)$ and acts
transitively on the neighbourhood of $\alpha$ in $\H(m,n)$ and on the
vertices  at distance $2$ from $\alpha$ in
$\H(m,n)$. Therefore $W$ acts $2$-distance-transitively (equivalently, since $\H(m,n)$ a graph, $\wedge$-transitively) on
$\H(m,n)$. 
When $n=2$, $\H(m,2)$ has diameter
$2$ and so the complement of $\H(m,2)$ is also 
  $2$-distance-transitive.} 
\end{example}

The directed graphs arising in the next example show some remarkable properties which (to the best of our knowledge) have not been noticed previously. 

\begin{example}[\textsc{The directed graphs} $X_q$ and $X_q(n)$]\label{Ex2}{\rm
Let $q$ be a power of a prime with $q\equiv 3\mod 4$ and $q\geq 7$, and let
$H=\SL(2,q)$ be the special linear group.  Note that as
$q\equiv 3\mod 4$, the element $-1$ of $\mathbb{F}_q$ is not a square.  Let
$V=\mathbb{F}_q\times \mathbb{F}_q$ be the vector space of dimension
$2$ of row vectors over the field $\mathbb{F}_q$ of size $q$. Let
$\Delta$ be the set of orbits of the group of diagonal matrices
\[
C=\left\{\left(
\begin{array}{cc}
x^2&0\\
0&x^2\\
\end{array}
\right)| x\in \mathbb{F}_q, x\neq 0\right\}
\]
acting on the set of non-zero vectors $V^*=V\setminus\{(0,0)\}$. Since $C$ acts
semiregularly on $V^*$ and $|C|=(q-1)/2$, each
orbit of $C$ on $V^*$ has size $(q-1)/2$. As $|V^*|=q^2-1$, we obtain
that $\Delta$ contains $2(q+1)$ elements. For $(a,b)\in
V^*$, we denote by $[a,b]$
the
element of $\Delta$ containing $(a,b)$. Since
$-1$ is not a square, we see that $\Delta=\{[a,\pm 1],[\pm 1,0] :
a\in \mathbb{F}_q\}$. 

The only non-identity proper normal
subgroup of $H$ is the centre $Z=\langle z\rangle$ (where $z$ is the scalar matrix with entries $-1$) 
and the orbits of $Z$ on
$\Delta$ are $\{[a,1],[-a,-1]\}$ (for each $a\in
\mathbb{F}_q$) and $\{[1,0],[-1,0]\}$. In particular, $H$ acts
faithfully on $\Delta$. Furthermore, the action of $H$ on the $Z$-orbits of $\Delta$ is the natural $2$-transitive action of $H/Z=\PSL(2,q)$ on the $q+1$
points of the projective line. 
The stabilizer in $H$ of the
element $[1,0]$ is the
subgroup
\begin{equation}\label{eq1}
H_{[1,0]}=\left\{\left(
\begin{array}{cc}
x^2&0\\
y&x^{-2}\\
\end{array}
\right): x,y\in \mathbb{F}_q, x\neq 0\right\},
\end{equation}
which has $4$ orbits on $\Delta$, namely $\{[1,0]\}$, $\{[-1,0]\}$,
$\{[a,1]: a \in \mathbb{F}_q\}$ and $\{[a,-1]: a\in
\mathbb{F}_q\}$, of size $1,1,q$ and $q$, respectively.

We let $X_q$ be the $H$-orbital graph $([1,0],[0,1])^H$. It is a computation (using  that $-1$ is not a square) to see that there is no $h\in H$ such that
$([1,0],[0,1])^h=([0,1],[1,0])$. Therefore $X_q$ is a directed
graph of in- and out-valency $q$. For example, $X_q^+([1,0])=\{[a,1]: a\in \mathbb{F}_q\}$ and $X_q^-([0,1])=\{[1,a]: a\in \mathbb{F}_q\}$. By applying 
\begin{equation}\label{eq2}
\iota=\left(
\begin{array}{cc}
0&-1\\
1&0\\
\end{array}
\right)
\end{equation}
to the set $X_q^{-}([0,1])$, we obtain $X_q^-([1,0])=\{[a,-1]: a\in \mathbb{F}_q\}$. This gives that $VX_q=\{[1,0],[-1,0]\}\cup X_q^+([1,0])\cup X_q^-([1,0])$ and $[-1,0]$ is the unique vertex of $X_q$ not adjacent to $[1,0]$. Now vertex transitivity shows that, for each vertex $v$, there exists a unique vertex which is not
adjacent to $v$ (namely $v^z$). 

We have $(X_q^+([1,0]))^z=X_q^-([1,0])$
and similarly $(X_q^-([1,0]))^z=X_q^+([1,0])$. Therefore, for each vertex $v$, we have 
$X^+(v)=X^-(v^z)$ and $X^-(v)=X^+(v^z)$. Therefore $X^+(v)\cap
X^+(v^z)=\emptyset$ and  $A^2_+X=\emptyset$. 
Consider the matrix
\[o=\left(
\begin{array}{cc}
0&1\\
1&0
\end{array}
\right)
\]
and the map $\circ:\Delta\to \Delta$ defined by $v\mapsto v^o$. It is an easy computation to show that $\circ$ determines a graph isomorphism from $X_q$ to $X_q^{opp}$. So, $X_q\cong X_q^{opp}$.

Let $n\geq 2$, let $W=H\wr\Sym(n)$ and let
$\alpha=([1,0],[1,0],\ldots,[1,0])$ and
$\beta=([0,1],[1,0],\ldots,[1,0])$ be in $\Delta^n$. We denote by
$X_q(n)$ the orbital graph $(\beta,\alpha)^W$. Clearly, $X_q(1)=X_q$.}
\end{example}

Since $X_q \cong X_q^{opp}$, we also have $X_q(n) \cong X_q(n)^{opp}$.  Therefore when $\Gamma=X_q(n)$, $\Aut(\Gamma)$ is transitive not only on $A^2_+\Gamma$, but on $A^2_-\Gamma$.  Thus, Theorem~\ref{main-thm} in fact tells us that in the situation we are studying, our definition of $\wedge$-transitivity for digraphs actually forces transitivity on $A^2_-\Gamma$ as well. Unfortunately, the automorphism group of $X_q(n)$ is not transitive on $A^2X_q(n)$ when $n >1$, so the strongest form of 2-distance-transitivity is not forced.

In Example~\ref{Ex2}, we exclude the case that $q=3$. In fact, for $q=3$, the graphs  $X_q(n)$ are still well-defined, but, since the socle of the group $H=\SL(2,3)\cong Q_8\rtimes C_3$ acts regularly on $\Delta$, we get that $X_q(n)$ is a  Cayley graph (recall that we are not concerned with Cayley graphs in this paper).

The graphs $X_q(n)$ are $\wedge$-transitive, as the following lemma explains.
\begin{lemma}\label{exa1}
Let $W$, $X_q(n)$ be as in Example~\ref{Ex2}. Then $W$ acts $\wedge$-transitively on $X_q(n)$.
\end{lemma}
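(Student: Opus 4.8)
The arc-transitivity half of the statement is immediate: by construction $X_q(n)=(\beta,\alpha)^W$ is an orbital graph, so Definition~\ref{orbital} already gives that $W$ is transitive on $AX_q(n)$. The whole content is therefore transitivity on $A^2_+X_q(n)$, and the plan is to determine this set explicitly and then carry a single representative onto an arbitrary member. I would begin by unwinding the action formula to describe the arcs. Writing $Y=([0,1],[1,0])^H$ (which is $X_q^{opp}\cong X_q$, an $H$-arc-transitive digraph on $\Delta$), a direct computation with $g=\sigma(h_1,\ldots,h_n)$ shows that $(\beta^g,\alpha^g)$ agree in every coordinate except $c=1^\sigma$, where the pair $(\beta^g_c,\alpha^g_c)=([0,1],[1,0])^{h_c}$ runs through all arcs of $Y$ as $h_c$ ranges over $H$. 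Using transitivity of $H$ on $\Delta$ (to realise arbitrary common entries in the remaining coordinates) and of $\Sym(n)$ on $\{1,\ldots,n\}$, this yields the clean description: $(\omega,\omega')$ is an arc of $X_q(n)$ exactly when $\omega,\omega'$ differ in a single coordinate $c$ and $(\omega_c,\omega'_c)$ is an arc of $Y$. In particular $X_q(n)^+(\omega)$ consists of the vertices obtained from $\omega$ by replacing one entry $\omega_c$ by an out-neighbour of $\omega_c$ in $Y$.

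Next I would characterize $A^2_+X_q(n)$. Suppose $(u,v)$ is non-adjacent with a common out-neighbour $w$; then $w$ differs from $u$ in a single coordinate $i$ and from $v$ in a single coordinate $j$. If $i=j$ then $u$ and $v$ differ only in coordinate $i$, so non-adjacency forces $v_i$ to be the unique non-neighbour $u_i^z$ of $u_i$ in $Y$; but the computation in Example~\ref{Ex2} shows that the out-neighbourhoods of $u_i$ and $u_i^z$ in $Y$ are disjoint, contradicting the existence of $w$. Hence $i\neq j$, and comparing coordinates forces $u$ and $v$ to agree off $\{i,j\}$ with $u_i\to v_i$ and $v_j\to u_j$ arcs of $Y$; conversely every such pair lies in $A^2_+X_q(n)$ (take $w_i=v_i$, $w_j=u_j$). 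Thus $A^2_+X_q(n)$ is precisely the set of pairs at Hamming distance $2$, differing in coordinates $i,j$, with $u_i\to v_i$ and $v_j\to u_j$ in $Y$.

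Finally I would fix the representative $u^*=([1,0],b,[1,0],\ldots,[1,0])$ and $v^*=(b,[1,0],[1,0],\ldots,[1,0])$, where $b$ is a chosen out-neighbour of $[1,0]$ in $Y$; this lies in $A^2_+X_q(n)$ by the previous step. Given an arbitrary $(u,v)\in A^2_+X_q(n)$ with its two distinguished coordinates $i,j$ (distinguishable because the two arcs point in opposite senses relative to the ordered pair), I choose $\sigma\in\Sym(n)$ with $1^\sigma=i$ and $2^\sigma=j$, then choose $h_i,h_j\in H$ carrying the arc $[1,0]\to b$ of $Y$ to $u_i\to v_i$ and to $v_j\to u_j$ respectively (possible by arc-transitivity of $H$ on $Y$), and for $k\notin\{i,j\}$ choose $h_k$ with $[1,0]^{h_k}=u_k=v_k$ (possible by transitivity of $H$ on $\Delta$). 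Setting $g=\sigma(h_1,\ldots,h_n)$, a coordinate-by-coordinate check against the action formula gives $(u^*)^g=u$ and $(v^*)^g=v$, establishing transitivity.

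The only genuinely non-formal step is the second one, and within it the use of the disjointness $Y^+(u_i)\cap Y^+(u_i^z)=\emptyset$ inherited from Example~\ref{Ex2}: this is exactly what rules out ``same-coordinate'' common out-neighbours and so pins $A^2_+X_q(n)$ down to the two-coordinate pattern that the wreath product is then readily seen to permute transitively. Everything else is a bookkeeping exercise in the action formula and the arc- and vertex-transitivity of $H$ on $\Delta$.
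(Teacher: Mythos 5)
Your proof is correct, and since the paper's own proof of Lemma~\ref{exa1} consists of the single sentence ``The proof consists of routine computations'' plus a pointer to the arXiv version, your argument is precisely the computation being invoked rather than a genuinely different approach. All the key steps check out: the coordinate-wise description of $AX_q(n)$ in terms of arcs of $Y=([0,1],[1,0])^H$; the reduction of $A^2_+X_q(n)$ to pairs differing in exactly two coordinates carrying oppositely-oriented arcs of $Y$, where the same-coordinate case is killed by $Y^+(v)\cap Y^+(v^z)=X_q^-(v)\cap X_q^+(v)=\emptyset$, which is exactly the identity $X^+(v)\cap X^+(v^z)=\emptyset$ of Example~\ref{Ex2} rewritten via $X^+(v^z)=X^-(v)$; and the explicit wreath-product element carrying the base pair onto an arbitrary pair, using arc-transitivity of $H$ on $Y$ and transitivity of $H$ on $\Delta$.
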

\begin{proof}
The proof consists of routine computations. For a detailed argument see~\cite{arxiv}.
\end{proof}

\section{The case $d_H(\alpha,\beta)=1$}\label{dist=1}

In this section we prove Theorem~\ref{main-thm} when $d_H(\alpha,\beta)=1$. We
start by recalling the definition of a {\em tournament}, which surprisingly is necessary in our arguments. A tournament
is a directed graph obtained by assigning a direction to each edge in
an undirected complete graph (that is, every pair of vertices is connected by a
single directed edge). A tournament is called {\em symmetric} if its
automorphism group is transitive on the
arcs. A finite symmetric tournament $\mathscr{T}$ has an odd number of
vertices, say $|V\mathscr{T}|=1+2k$, and every vertex has $k$ in-neighbours and
$k$ out-neighbours. The {\em Payley tournament} $\mathscr{T}_q$ is the
tournament with vertices the elements of the finite field
$\mathbb{F}_q$, where $q\equiv 3 \mod 4$, and with an arc from $a$ to
$b$ when $b-a$ is a non-zero square in $\mathbb{F}_q$ (that
is, $b-a=x^2$ for some $x\in \mathbb{F}_q\setminus \{0\}$).

\begin{lemma}\label{tournamentprel}Let $\mathscr{T}$ be a finite symmetric
  tournament and let $H$ be a group of automorphisms acting transitively on
  the arcs of $\mathscr{T}$. Then $\mathscr{T}\cong \mathscr{T}_q$, for some
  $q\equiv 3\mod 4$, and the socle of $H$ acts regularly on the
  vertices of $\mathscr{T}$.
\end{lemma}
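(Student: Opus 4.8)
The plan is to show first that $H$ is a rank $3$ group, then that it is solvable and primitive, and finally to identify $\mathscr{T}$ with a Paley tournament. I begin by recording the consequences of arc-transitivity. Since every vertex is the tail of some arc, $H$ is vertex-transitive; and for a fixed vertex $v$ the stabilizer $H_v$ is transitive on the out-neighbourhood $\mathscr{T}^{+}(v)$, and (applying arc-transitivity to the arcs with head $v$) on the in-neighbourhood $\mathscr{T}^{-}(v)$. As $V\mathscr{T}=\{v\}\cup \mathscr{T}^{+}(v)\cup \mathscr{T}^{-}(v)$ and these last two sets are the non-trivial $H_v$-orbits, $H$ has rank $3$, with the two non-trivial orbitals paired with one another rather than self-paired (this is precisely the assertion that the relation ``$\to$'' is asymmetric).

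Next I would prove that $|H|$ is odd. The key observation is that a tournament admits no non-identity automorphism of even order: an involution $\tau$ would act on any $2$-element orbit $\{x,x^{\tau}\}$ by reversing the unique arc joining $x$ and $x^{\tau}$, which is impossible; hence $\tau$ fixes every vertex. So $H$ has no involutions, $|H|$ is odd, and by the Feit--Thompson theorem $H$ is solvable (this keeps the argument independent of the Classification). I would then note that a regular tournament is strongly connected, since a proper subset dominating its complement would violate regularity of the out-degrees; thus the non-trivial orbital digraphs are connected and $H$ is primitive. A primitive solvable group has a unique minimal normal subgroup, which is elementary abelian and regular. Therefore the socle of $H$ is an elementary abelian $p$-group acting regularly, $|V\mathscr{T}|=q=p^{d}$ is a prime power, and $H\leq \mathrm{AGL}(d,p)$ with the socle as the translation group.

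Finally I would identify the tournament. Writing $V\mathscr{T}=\mathbb{F}_q$ with the socle as the additive group, $\mathscr{T}$ becomes a Cayley tournament with connection set $S=\mathscr{T}^{+}(0)$, a ``skew'' set satisfying $S\cup(-S)=\mathbb{F}_q^{*}$ and $S\cap(-S)=\emptyset$, so $|S|=(q-1)/2$. The point stabilizer $H_0\leq \mathrm{GL}(d,p)$ is transitive on $S$ and on $-S$, whence $\langle H_0,-\mathrm{id}\rangle\ltimes V$ is transitive on $\mathbb{F}_q^{*}$, i.e.\ a solvable $2$-transitive affine group. By Huppert's classification of solvable $2$-transitive groups (again free of the Classification), after discarding the finitely many small exceptional degrees $H_0$ lies in $\Gamma\mathrm{L}(1,q)$, and an $H_0$-orbit of size $(q-1)/2$ that is skew is forced to be the set of non-zero squares. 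Since $S\cap(-S)=\emptyset$ this requires $-1$ to be a non-square, i.e.\ $q\equiv 3\pmod 4$, and $\mathscr{T}\cong \mathscr{T}_q$ with socle acting regularly.

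I expect the main obstacle to be this last step: pinning down $S$ as exactly the squares and checking that none of the exceptional solvable $2$-transitive groups appearing in Huppert's list (those with $H_0$ involving $\mathrm{SL}(2,3)$ or $\mathrm{SL}(2,5)$ in small dimension) can arise here. Each such case must be eliminated by hand, either on order grounds (using that $|H_0|$ is odd) or by verifying directly that no invariant skew set $S$ of the required size exists; everything earlier is routine.
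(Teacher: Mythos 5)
Your outline is correct, but it takes a genuinely different route from the paper. The paper's proof is essentially a citation: it invokes Berggren's theorem~\cite{Berg} both for the isomorphism $\mathscr{T}\cong\mathscr{T}_q$ with $q\equiv 3\pmod 4$ and for the explicit description of $\Aut(\mathscr{T}_q)$ as the maps $a\mapsto x^2a^\sigma+c$, and then only has to check that any arc-transitive $H\leq \Aut(\mathscr{T}_q)$ contains the subgroup $\{a\mapsto x^2a+c\}$, whose socle is the regular translation group. You instead reprove Berggren's classification from first principles: no tournament automorphism has even order, so $|H|$ is odd and Feit--Thompson gives solvability; vertex-transitivity forces regularity, hence strong connectivity of $\mathscr{T}$ and of its reverse, hence (these being the two non-trivial orbital digraphs) primitivity by Higman's criterion; a solvable primitive group has a regular elementary abelian socle, which already yields the second conclusion of the lemma; and Huppert's classification of solvable $2$-transitive groups, applied to $\langle H_0,-\mathrm{id}\rangle\ltimes V$, places $H_0$ inside $\Gamma\mathrm{L}(1,q)$. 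All of these steps are sound and CFSG-free, so your argument is compatible with the paper's stated purpose; what it buys is self-containedness, at the cost of length and of redoing work the paper outsources to Berggren.

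The step you flag as the main obstacle does close, and more cheaply than you anticipate. First, you do not need Huppert at all to get $q\equiv 3\pmod 4$: the out-neighbourhood $S$ is an $H_0$-orbit, so $|S|=(q-1)/2$ divides the odd number $|H_0|$, forcing $(q-1)/2$ to be odd. This single observation also disposes of every exceptional degree in Huppert's list ($3^2,5^2,7^2,11^2,23^2,3^4$): each is an odd square, hence $\equiv 1\pmod 4$, so no hand elimination is needed. Second, once $H_0\leq\Gamma\mathrm{L}(1,q)$, put $M=H_0\cap\mathbb{F}_q^*$; since $|M|$ is odd, $M$ lies in the group $Q$ of non-zero squares, and $S$ is a union of $r=|S|/|M|$ cosets of $M$, where $r$ divides the odd number $|H_0|$. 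The group $H_0$ acts on $\mathbb{F}_q^*/M\cong\mathbb{Z}_{2r}$ through a cyclic group generated by a single map of the form $\bar a\mapsto p^j\bar a+\bar c$ with $p$ odd. If $\bar c$ were odd, this map would alternate parity, which is impossible along an orbit of odd length $r$; so $\bar c$ is even, the image of $S$ has constant parity, and $S$ is either exactly the even classes ($S=Q$) or exactly the odd classes ($S=\mathbb{F}_q^*\setminus Q$). Here your statement needs one correction: $S$ is not forced to be the set of squares; the non-squares also occur, but in that case $\mathscr{T}\cong\mathscr{T}_q^{opp}$, and multiplication by a fixed non-square is an isomorphism from $\mathscr{T}_q$ to $\mathscr{T}_q^{opp}$, so $\mathscr{T}\cong\mathscr{T}_q$ in either case.
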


\begin{proof}
Berggren~\cite{Berg} shows that if $\mathscr{T}$ is a finite symmetric
tournament, then $\mathscr{T}$ is isomorphic to $\mathscr{T}_q$ for some $q\equiv 3\mod
4$. In particular, we may assume that $\mathscr{T}=\mathscr{T}_q$.
Moreover,~\cite[Theorem~A]{Berg} gives that the automorphism group
$\mathrm{Aut}(\mathscr{T}_q)$ of $\mathscr{T}_q$ is the group of all affine permutations
of $\mathbb{F}_q$ of the form $\tau_{\sigma,x^2,c}:a\mapsto x^2a^\sigma +c$, where $c\in \mathbb{F}_q$, $x\in\mathbb{F}_q\setminus\{0\}$, and $\sigma\in\mathrm{Gal}(\mathbb{F}_q)$. Using this description of $\mathrm{Aut}(\mathscr{T}_q)$, it is easy to see that if $H$ acts transitively on
the arcs of $\mathscr{T}_q$, then $A=\{\tau_{id,x^2,c}:
x,c\in\mathbb{F}_q,x\neq 0\}$ is a subgroup of $H$ (where $id$ denotes
the identity Galois automorphism of $\mathbb{F}_q$). Now the socle of
$A$ is $\{\tau_{id,1,c}: c\in \mathbb{F}_q\}$ and coincides with
the socle of $\mathrm{Aut}(\mathscr{T}_q)$. Clearly $T=\{\tau_{id,1,c}:
c\in \mathbb{F}_q\}$ acts regularly on the vertices of $\mathscr{T}_q$.
\end{proof}

Before proceeding, we need the following definition. (The normal quotient technique is a very important idea introduced in~\cite{Praeger} which has proven useful in the investigation of many  graphs~\cite{Pr1,Pr2}.)

\begin{definition}\label{def:nq}{\rm
Let $G$ be a group acting transitively on the digraph $\Gamma$, and let $C$ be a normal subgroup of $G$. Let $\alpha^C$ denote the $C$-orbit containing $\alpha\in  V\Gamma$. The \emph{normal quotient} $\Gamma_C$ is the graph whose vertices are the $C$-orbits on $V\Gamma$, with an arc between distinct vertices $\alpha^C$ and $\beta^C$ if
and only if there is an arc of $\Gamma$ between $\alpha'$ and $\beta'$, for some $\alpha' \in \alpha^C$ and $\beta'\in \beta^C$.
}
\end{definition} 

The following proposition is the most substantial result of this section. The proof is quite long and involved, however, it is elementary  and  we do not make use of the Classification of the Finite Simple Groups.  The corollary that follows it will complete the proof of Theorem~\ref{main-thm} in the case where $d_H(\alpha,\beta)=1$.
\begin{proposition}\label{propdim1}
Let $X$ be a connected $H$-orbital graph and let $\delta'$ be an arbitrary vertex of
$X$ (so $\delta' \in \Delta$). Assume that any two vertices of $X^-(\delta')$ are adjacent. Then
either $X$ is the complete graph, or $X=X_q$, for some $q\equiv 3\mod
4$.    
\end{proposition}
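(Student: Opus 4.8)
The plan is to split according to whether $X$ is undirected or directed, the first case being quick and the second delivering $X_q$. Throughout I would use that $X$, being an orbital graph, is vertex- and arc-transitive under $H$, that its in- and out-valencies are equal, and that the hypothesis ``any two vertices of $X^-(\delta')$ are adjacent'' is exactly the assertion $A^2_+X=\emptyset$.

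First I would dispose of the undirected case. Here $X^-(\delta')=X^+(\delta')$ is the neighbourhood of $\delta'$, and the hypothesis says it is a clique; by vertex-transitivity every neighbourhood is a clique. A short induction on the length of a shortest walk then shows that any two vertices of the connected graph $X$ are adjacent: if $x_0\sim x_1\sim\cdots\sim x_k$ is a walk, then $x_0\sim x_{k-1}$ by induction and $x_0,x_k$ are both neighbours of $x_{k-1}$, hence adjacent. Thus $X$ is complete.

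So assume $X$ is directed. Since a directed (non-self-paired) orbital graph has at most one arc between any two vertices, the condition $A^2_+X=\emptyset$ means precisely that $X^-(\delta')$ carries the structure of a tournament $\mathscr T$, on which $H_{\delta'}$ acts vertex-transitively. Because $T$ is not regular on $\Delta$ (Definition~\ref{def}), the stabilizer $T_{\delta'}\le H_{\delta'}$ is nontrivial; this rules out the degenerate possibility that $X$ is a directed cycle and guarantees that $\mathscr T$ has at least two vertices. The crux is to upgrade vertex-transitivity to \emph{arc}-transitivity of $H_{\delta'}$ on $\mathscr T$, so that $\mathscr T$ is a finite symmetric tournament. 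The plan here is to exploit the global arc-transitivity of $X$: given two arcs $u_1\to v_1$ and $u_2\to v_2$ of $\mathscr T$, arc-transitivity supplies $g\in H$ with $(u_1,v_1)^g=(u_2,v_2)$, and then $\delta'^{\,g}$ is some common out-neighbour of $u_2$ and $v_2$; correcting $g$ by an element of the arc-stabilizer $H_{(u_2,v_2)}$ that returns $\delta'^{\,g}$ to $\delta'$ lands the whole configuration in $H_{\delta'}$. This reduces arc-transitivity of $\mathscr T$ to transitivity of the arc-stabilizer of $u_2\to v_2$ on the common out-neighbours $X^+(u_2)\cap X^+(v_2)$, which I would extract from the clique structure of in-neighbourhoods together with connectivity.

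Once $\mathscr T$ is a finite symmetric tournament, Lemma~\ref{tournamentprel} gives $\mathscr T\cong\mathscr T_q$ for some $q\equiv 3\bmod 4$, identifies the in-valency of $X$ as $q$, and shows the socle of $H_{\delta'}^{\mathscr T}$ acts regularly on $X^-(\delta')$. To reconstruct $X$ and match Example~\ref{Ex2}, I would determine the suborbits of $H_{\delta'}$ on $\Delta$: beyond $\{\delta'\}$, $X^+(\delta')$ and $X^-(\delta')$ (each of size $q$), a double-counting argument using that the clique $X^-(w)$ of every $w\in X^+(\delta')$ lies inside the closed neighbourhood of $\delta'$ forces the non-neighbours of $\delta'$ to be a single vertex. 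This yields $|\Delta|=2(q+1)$, shows the underlying graph is the cocktail-party graph $K_{(q+1)\times 2}$ with the non-adjacency relation a perfect matching, and finally the Paley structure together with arc-transitivity forces the orientation, giving $X\cong X_q$; the value $q=3$ does not occur, since there the relevant normal subgroup would act regularly, contrary to Definition~\ref{def}. I expect the principal obstacle to be the arc-transitivity of $\mathscr T$, as vertex-transitivity alone does not give it and one must genuinely use the global arc-transitivity of $X$ and the clique condition, the subsequent reconstruction being lengthy but elementary bookkeeping with the affine/Paley structure supplied by Lemma~\ref{tournamentprel}.
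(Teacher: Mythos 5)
Your undirected case is fine and matches the paper's. In the directed case, however, there are two genuine gaps, and they sit exactly where the difficulty of the proposition lies. First, your plan hinges on applying Lemma~\ref{tournamentprel} (Berggren's theorem) to the in-neighbourhood tournament $X^-(\delta')$, which requires $H_{\delta'}$ to be transitive on its \emph{arcs}. You correctly flag this as the crux, but your ``reduction'' --- to transitivity of the arc-stabilizer $H_{(u_2,v_2)}$ on the common out-neighbours $X^+(u_2)\cap X^+(v_2)$ --- is just an equivalent restatement of the same problem, and no argument is offered for it; vertex-transitivity of a tournament does not imply arc-transitivity. Notably, the paper never actually needs this fact: although it uses the phrase ``symmetric tournament'', all of its Claims~1--8 use only the \emph{regularity} of the sub-tournaments (in- and out-valency $(q-1)/2$), which does follow from vertex-transitivity, and it invokes Lemma~\ref{tournamentprel} only for $X$ itself (where arc-transitivity is part of the orbital-graph hypothesis), to kill the degenerate tournament and directed-cycle cases.

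Second, and more seriously, your final step --- ``the Paley structure together with arc-transitivity forces the orientation, giving $X\cong X_q$'' --- is asserted with no argument, and it is the heart of the proposition. Knowing $|VX|=2(q+1)$, that non-adjacency is a perfect matching $v\mapsto v^*$, and that in-neighbourhoods are Paley tournaments does not visibly determine the digraph: one must still determine the orientation of the arcs between $X^-(\delta')$ and $X^+(\delta')$ and prove uniqueness up to isomorphism, and no combinatorial uniqueness argument is sketched. The paper does not do this combinatorially at all: it converts the pairing $v\mapsto v^*$ into a normal subgroup $C$ of order $2$ and a unique involution in $H$ (Claims~6--7), deduces that the Sylow $2$-subgroups of $T$ are cyclic or quaternion, passes to the normal quotient $X_C$ (a complete graph) to obtain a $2$-transitive group of degree $q+1$, and then uses Burnside's theorem, the Gorenstein--Walter and Walter classifications and covering-group data to force $T=\SL(2,q)$, after which the suborbit structure of the Borel subgroup pins down $X=X_q$ and $q\equiv 3\bmod 4$. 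Nothing in your proposal substitutes for this machinery. A smaller but related gap: your double count can only show there is \emph{at most} one non-neighbour of $\delta'$; the case where $X$ itself is a tournament (e.g.\ a Paley tournament, which satisfies every combinatorial hypothesis of the proposition) cannot be excluded combinatorially and must be ruled out by the non-regularity of $T$ via Lemma~\ref{tournamentprel}, as the paper does at the outset --- you invoke that mechanism for directed cycles and for $q=3$, but not here.
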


\begin{proof}
Fix $\delta_0$ a vertex of $X$. Suppose that $X$ is undirected. As $X^-(\delta_0)=X(\delta_0)$ is a complete graph and as every vertex of $X(\delta_0)$ is adjacent to $\delta_0$, we obtain that $\{\delta_0\}\cup X(\delta_0)$ is a connected component of $X$. Since $X$ is connected, we see that $X$ is complete. Therefore, in the rest of the proof we may assume that $X$ is a digraph. Let $q$ be the in-valency of $X$.

Suppose that $|VX|=1+2q$, that is, $VX=\{\delta_0\}\cup 
X^+(\delta_0)\cup X^-(\delta_0)$. Since $H$ acts transitively on the
vertices of $X$, we see that any two vertices of $X$ are adjacent. In
particular, $X$ is a tournament. Since we are assuming that $T$ does not act regularly on
$VX$, from Lemma~\ref{tournamentprel} we obtain a contradiction.
In particular, in
the rest of the proof we may assume that $|VX|>1+2q$. If $q=1$, then
$X$ is a directed cycle and its automorphism group is a cyclic group. 
 The socle of $\mathrm{Aut}(X)$ acts regularly on $VX$ which
 again contradicts our hypothesis on $T$. Thus $q>1$.

As $H_{\delta_0}$ acts transitively on $X^-(\delta_0)$ and as any two
vertices of $X^-(\delta_0)$ are adjacent, we see that the induced subgraph
of $X$ on $X^-(\delta_0)$ is a symmetric tournament. In
particular, $q$ is odd. 
Now we prove eight claims from which the result will follow.

\smallskip
\noindent\textsc{Claim~1. }The induced subgraph of $X$ on
$X^+(\delta_0)$ is a symmetric tournament.
\smallskip

\noindent Let $\delta'$ be in $X^+(\delta_0)$, let $\delta''$ be in $X^-(\delta_0)$ and let $Y=X^+(\delta'')\cap X^-(\delta_0)$ be the out-neighbours of $\delta''$ in
$X^-(\delta_0)$. Since the induced subgraph  of $X$ on $X^-(\delta_0)$ is a
tournament, we have $|Y|=(q-1)/2$. As $(\delta'',\delta_0)$ and
$(\delta_0,\delta')$ are arcs of $X$ and $H$ is transitive on $AX$, there exists $h\in H$ with
$({\delta''},\delta_0)^h=(\delta_0,\delta')$. We obtain $Y^h=X^+(\delta_0)\cap X^-(\delta')$ and $\delta'$
 has $|Y^h|=(q-1)/2$ in-neighbours in $X^+(\delta_0)$. Since $H_{\delta_0}$
 is transitive on $X^+(\delta_0)$, the induced subgraph  of $X$ on
 $X^+(\delta_0)$ has out-valency $(q-1)/2$ and in-valency $(q-1)/2$ and
 hence it is a symmetric tournament.~$_\blacksquare$

\smallskip
\noindent\textsc{Claim~2. }Let $\delta'$ be in $X^+(\delta_0)$. Then
$\delta'$ has exactly $(q-1)/2$ in-neighbours in $X^+(\delta_0)$ and
exactly $(q-1)/2$ in-neighbours in $X^-(\delta_0)$.
\smallskip

\noindent As $X^+(\delta_0)$ is a symmetric tournament, $\delta'$ has $(q-1)/2$
in-neighbours in $X^+(\delta_0)$, that is $|X^-(\delta')\cap
X^+(\delta_0)|=(q-1)/2$. Set $Y=X^-(\delta')\setminus(\{\delta_0\}\cup
X^+(\delta_0))$ and let $\delta''$ be in $Y$. As $X^-(\delta')$ is a
symmetric tournament and as $\delta_0,\delta''\in X^-(\delta')$, we have
that $\delta_0$ and $\delta''$ are adjacent. Since $\delta''\notin
X^+(\delta_0)$, we get $\delta''\in X^-(\delta_0)$. As $\delta''$ is an
arbitrary element of $Y$, we have $Y\subseteq X^-(\delta_0)$ and
$\delta'$ has $|Y|=(q-1)/2$ in-neighbours in $X^-(\delta_0)$.~$_\blacksquare$

\smallskip

Claim~2 shows that for each $\delta'\in X^+(\delta_0)$, we
have $X^-(\delta')\subseteq (\{\delta_0\}\cup X^+(\delta_0)\cup
X^-(\delta_0))$. If, for every $\delta'\in X^+(\delta_0)$, we also have  $X^+(\delta')\subseteq (\{\delta_0\}\cup X^+(\delta_0)\cup
X^-(\delta_0))$, then (using the transitivity of $H$ on $VX$ together with a
connectedness argument) we obtain $VX=\{\delta_0\}\cup X^+(\delta_0)\cup X^-(\delta_0)$, which contradicts
$|VX|>1+2q$. This shows that there exists $\delta'\in X^+(\delta_0)$ and
$\delta_0^*\in X^+(\delta')$ with $\delta_0^*\notin (\{\delta_0\}\cup X^+(\delta_0)\cup X^-(\delta_0))$.

\smallskip
\noindent\textsc{Claim~3. } $X^-(\delta_0^*)=X^+(\delta_0)$.
\smallskip

\noindent Let $\delta''$ be an out-neighbour of $\delta'$ in
$X^+(\delta_0)$. Since $\delta_0^*,\delta''\in X^+(\delta')$, by Claim~1
and by vertex transitivity, we obtain that $\delta_0^*$
and $\delta''$ are adjacent. If $\delta_0^*\in X^-(\delta'')$, then by
Claim~2 applied to $\delta''$, we see that $\delta_0^*\in
(\{\delta_0\}\cup X^+(\delta_0)\cup X^-(\delta_0))$, which
contradicts our choice of $\delta_0^*$. Therefore $\delta_0^*\in X^+(\delta'')$. Since $\delta''$
is an arbitrary out-neighbour of $\delta'$ in $X^+(\delta_0)$, we see
that every out-neighbour of $\delta'$ in $X^+(\delta_0)$ is an
in-neighbour of $\delta_0^*$. The vertex $\delta'$ was an arbitrary element of $X^+(\delta_0)$
in this argument, so since the induced subgraph of $X$ on $X^+(\delta_0)$ is a symmetric tournament, every vertex of $X^+(\delta_0)$ is an in-neighbour of some element of $X^+(\delta_0)\cap X^-(\delta_0^*)$.
Hence $X^+(\delta_0)\subseteq
X^-(\delta_0^*)$. Since $q=|X^+(\delta_0)|=|X^-(\delta_0^*)|$, we have $X^+(\delta_0)=X^-(\delta_0^*)$.~$_\blacksquare$
 
\smallskip
\noindent\textsc{Claim~4. }$X^+(\delta_0^*)=X^-(\delta_0)$.
\smallskip

\noindent We first show that $\delta_0^*$ has at least one out-neighbour in $X^-(\delta_0)$. Fix an element $w$ in $X^+(\delta_0)$ and write
$U=X^-(w)\cap X^+(\delta_0)$ and $V=X^-(w)\cap
X^-(\delta_0)$. From Claim~2, $|U|=|V|=(q-1)/2$. As $X^-(\delta_0)$ is a
symmetric tournament, by $H$-transitivity, we have that
$X^-(w)=\{\delta_0\}\cup U\cup V$ is a symmetric tournament. Let
$\delta''$ be in $U$. Now, as $\delta''$ has $(q-1)/2$ out-neighbours
in $X^-(w)$ and as $\delta_0$ is not an out-neighbour of
$\delta''$ (because $\delta''\in U\subseteq
X^+(\delta_0)$), we obtain by the pigeon-hole principle that $\delta''$ has an out-neighbour  $\delta_0'$ in
$V$, that is, $\delta_0'\in X^+(\delta'') \cap V$. As $\delta''\in X^+(\delta_0)=X^-(\delta_0^*)$, we see that
$\delta_0^*\in X^+(\delta'')$. Therefore, $\delta_0'$ and $\delta_0^*$ are both in $X^+(\delta'')$.  From Claim~1 applied to $\delta''$ we get that
$\delta_0'$ and $\delta_0^*$ are adjacent. Since $\delta_0'\in V\subseteq
X^-(\delta_0)$ and since $X^-(\delta_0^*)=X^+(\delta_0)$, we have
$\delta_0'\notin X^-(\delta_0^*)$. Therefore we must have that
$\delta_0'\in X^+(\delta_0^*)$.

Now that we have shown that $\delta_0^*$ has one out-neighbour $\delta_0'$ in
$X^-(\delta_0)$, using an argument similar to the argument in the proof of
Claim~3, we obtain that every element of $X^-(\delta_0)$ is an
out-neighbour of $\delta_0^*$. So $X^+(\delta_0^*)=X^-(\delta_0)$.~$_\blacksquare$

\smallskip
\noindent\textsc{Claim~5. }$VX=\{\delta_0,\delta_0^*\}\cup X^+(\delta_0)\cup
X^-(\delta_0)$ and $|VX|=2(1+q)$.
\smallskip

\noindent We show that every vertex $v$ in the induced subgraph of $X$ on $V=\{\delta_0,\delta_0^*\}\cup X^+(\delta_0)\cup
X^-(\delta_0)$ has in-valency $q$, from which the claim follows by
connectedness. If $v=\delta_0$, then there is nothing to prove. If 
$v=\delta_0^*$, then from Claim~3 we have $X^-(\delta_0^*)=X^+(\delta_0)\subseteq
V$. Also, if $v\in X^+(\delta_0)$, then from Claim~2 we have $X^-(v)\subseteq
V$. It remains to consider $v\in X^-(\delta_0)$. Applying the argument in
Claim~2 with $v=\delta'$ and with $\delta_0^*=\delta_0$, we obtain that
$v$ has $(q-1)/2$ in-neighbours in $X^+(\delta_0^*)$ and $(q-1)/2$
in-neighbours in $X^-(\delta_0^*)$. As $\delta_0^*$ is also an
in-neighbour of $v$, we obtain $X^-(v)\subseteq V$ from Claims~3
and~4.~$_\blacksquare$

\smallskip

Since $H$ acts transitively on the vertices of $X$, we have that for
every vertex $v$, there exists a {\em unique} vertex $v^*$ with
$X^+(v)=X^-(v^*)$ and $X^-(v)=X^+(v^*)$. In particular, the set
$\mathscr{B}=\{\{v,v^*\}: v\in VX\}$ is a system of imprimitivity
for the action of $H$ on $VX$. Let $C$ be the kernel of the action of
$H$ on $\mathscr{B}$. 

\smallskip
\noindent\textsc{Claim~6. }Let $v$ be in $VX$ and let $v'$ be in
$VX\setminus\{v,v^*\}$. Then the induced subgraph  of $X$ on
$\{v,v^*,v',(v')^*\}$ is a directed cycle.
\smallskip

\noindent From Claim~5, we see that $v$ and $v'$ are adjacent, so
replacing $(v,v')$ by $(v',v)$ if necessary, we may assume that $v'\in
X^+(v)$. 
As $v'\in X^+(v)=X^-(v^*)$, $(v',v^*)$ is an arc of $X$. Since
$(v')^*\neq v^*$ and since $v$ is adjacent to every element different
from $v^*$, we obtain that either $(v')^*\in X^+(v)$ or $(v')^*\in
X^-(v)$. If $(v')^*\in X^+(v)$, then $v',(v')^*\in X^+(v)$ and so from
Claim~1, $v'$ and $(v')^*$ are adjacent, a contradiction. Therefore
$((v')^*,v)$ is an arc of $X$. As $(v')^*\in X^-(v)=X^+(v^*)$, $(v^*,(v')^*)$
is also an arc of $X$.~$_\blacksquare$

\smallskip

\noindent\textsc{Claim~7. }$H$ contains a unique element of order $2$
and $|C|=2$.
\smallskip

\noindent As $|VX|=2(q+1)$ is even and $H$ acts transitively on $X$,
the group $H$ contains an element $h$ of order $2$. Assume that $h\in
H\setminus C$. As $h\notin C$, there exists
$v\in VX$ with $\{v,v^*\}^h\neq \{v,v^*\}$. Set $v'=v^h$. From
Claim~6, the induced
subgraph  of $X$ on $\{v,v^*,v',(v')^*\}$ is a directed cycle
which is $h$-invariant because $h^2=1$. As the automorphism group of a directed
cycle of length four is a cyclic group whose generator squares to an involution mapping $v$ to
$v^*$, we obtain $v'=v^h=v^*$, a contradiction. Therefore, every
involution of $H$ lies in $C$.

Since the blocks of $\mathscr{B}$ have size $2$, we have that $C$ is
an elementary abelian $2$-group. Let $h$ be an element of
$C\setminus\{1\}$ and assume that $h$ fixes a vertex, $v$ say, of
$X$. Let $v'$ be any vertex of $X$ with $v'\not\in\{ v,v^*\}$. Now, from Claim~6, the
induced subgraph of $X$ on $\{v,v^*,v',(v')^*\}$ is a directed cycle
which is $h$-invariant. As the automorphism group of a directed cycle
is a cyclic group acting regularly and as $h$ fixes $v$, we obtain
that $h$ fixes $v,v^*,v',(v')^*$. Since $v'$ is an arbitrary element of
$VX$ with $v'\not\in\{ v,v^*\}$, we obtain $h=1$. This shows that $|C|=2$.~$_\blacksquare$

\smallskip

We let $z$ denote the generator of $C$, and $T$ the socle of $H$. We have $v^z=v^*$, for each $v\in VX$.

As $T$ acts
transitively on $VX$ and since $|VX|$ is even, $C\leq T$. 
Let $S$ be a Sylow $2$-subgroup of $T$. Since $H$ has a unique
involution, the group $S$ has a unique involution (namely $z$). It
follows from~\cite[Proposition~(4.4), p.~59]{Suz} that $S$ is either a cyclic group of order $2^a$
(for $a\geq 1$), or a quaternion
group of order $2^a$ (for $a\geq 3$). It follows that either  $S/C$ is a cyclic group of order $2^{a-1}$,
or a dihedral group of order $2^{a-1}$ (if $a\geq 4$), or an
elementary abelian $2$-group of order $4$ (if $a=3$).

Since the group $C$ acts transitively on $\{v,v^*\}$ (for each $v\in
VX$), the system of imprimitivity $\mathscr{B}$ consists of
the orbits of $C$ on $VX$. In particular, the quotient graph $X_C$ is a normal
quotient. From Claim~5, $X_C$ is an undirected complete graph with
$q+1$ vertices. Write $\overline{H}=H/C$. Since $H$ acts
arc-transitively on $X$, the group $\overline{H}$ acts
arc-transitively on $X_C$. In particular, $\overline{H}$ is a
$2$-transitive group of degree $q+1$. 
Let $\overline{T}=T/C$ be the socle of
$\overline{H}$. By a celebrated theorem of Burnside~\cite[Theorem~4.1B]{DM}, $\overline{T}$
is either a regular elementary abelian $p$-group (for some prime $p$),
or a non-regular non-abelian simple group. Assume that $\overline{T}$ is
abelian. Since $|\overline{T}|=q+1$, we have $|T|=2(q+1)$ and $T$
acts regularly on $VX$, a contradiction. 

This shows that $\overline{T}$ is a
non-regular non-abelian simple group whose Sylow $2$-subgroup $S/C$ is
either cyclic, or dihedral  or elementary abelian of order $4$.  

\smallskip
\noindent\textsc{Claim~8. } $T=\mathrm{SL}(2,r)$ for some odd $r$. 
\smallskip

\noindent From~\cite[Corollary~2, p.~144]{Suz}, we see that the Sylow
$2$-subgroup of a simple group is not cyclic. 
If $C$ splits over $T$ (that is, $C$ has a complement, $L$ say, in
$T$), then $T=L\times C$ for some finite non-abelian simple group $L$. As $L$ has even order by the Odd order theorem, the group $T$ has more than one involution, which contradicts Claim~$7$. Thus $C$ does not split over $T$. Therefore $T$ is a quotient of the universal covering group $U$ of $\overline{T}$, that
is, $T\cong U/Z$ for some central subgroup $Z$ of $U$. We now show that $\overline{T}\cong \PSL(2,r)$, for some odd $r$.

Suppose that $S/C$ is a
dihedral group. 
From the classification of Gorenstein and
Walter~\cite{GW} of the non-abelian simple groups with a dihedral
Sylow $2$-subgroup, we see that either $\overline{T}\cong \Alt(7)$ or
$\overline{T}\cong \mathrm{PSL}(2,r)$ for some odd $r$ with $r\equiv 1\mod 8$
or $r\equiv 7\mod 8$. From~\cite{ATLAS}, we see that $\Alt(7)$ has only
two $2$-transitive permutation representations, one of degree $7$ and
one of degree $15$. As $q+1$ is even, we obtain that
$\overline{T}\not\cong \Alt(7)$. 

We may now assume that $S/C$ is an elementary abelian group of order $4$. From the classification of
Walter~\cite{W} of the non-abelian simple groups with an abelian
Sylow $2$-subgroup, we see that either $\overline{T}\cong
\mathrm{PSL}(2,r)$ (for $r=2^b$, or for some odd $r$ with  $r\equiv 3\mod 8$  or
$r\equiv 5\mod 8$), or $\overline{T}=J_1$, or
$\overline{T}={^2}G_2(3^\ell)$ (for some odd
$\ell>1$). From~\cite[Table~5 and p.~36]{ATLAS}, we see
       that the universal covering groups of ${^2}G_2(3^\ell)$ and of
       $J_1$ are simple. Hence $T=U=\overline{T}$, a contradiction. 
Since a Sylow $2$-subgroup of $\mathrm{PSL}(2,2^b)$ has
order $2^b$, we obtain that $b=2$. Moreover, since
$\mathrm{PSL}(2,4)\cong \mathrm{PSL}(2,5)$, we can include this case
in the odd characteristic. 

From~\cite[Table~5]{ATLAS}, the universal
covering group of $\mathrm{PSL}(2,r)$, with $r$ odd, is
$\mathrm{SL}(2,r)$, whose centre has order $2$. Since
$|T|=2|\overline{T}|$, we get $T=U=\mathrm{SL}(2,r)$ and $C$ is the centre of $T$.~$_\blacksquare$

\smallskip

We are now ready to conclude the proof. 
From~\cite[Table~$7.4$, p.~197]{Peter}, the group $\mathrm{PSL}(2,r)$
has only one $2$-transitive permutation representation of even degree,
namely the natural action of degree $r+1$ on the points of the
projective line. In particular, $r=q$. Moreover, there exists $\{v,v^*\}\in \mathcal{B}$ such that the stabilizer $T_{\{v,v^*\}}$ is the Borel subgroup

\[
\left\{
\left(
\begin{array}{cc}
x&0\\
b&x^{-1}
\end{array}
\right)
: x,b\in\mathbb{F}_q,x\neq 0
\right\}.
\]
Since $|\{v,v^*\}|=2$, the stabilizer $T_v$ has index $2$ in $T_{\{v,v^*\}}$. As $T_{\{v,v^*\}}\cong
\mathbb{F}_q\rtimes \mathbb{F}_q^*$, we see that $T_{\{v,v^*\}}$ has a unique
subgroup of index $2$ and hence

\[
T_v=\left\{
\left(
\begin{array}{cc}
x^2&0\\
b&x^{-2}
\end{array}
\right)
: x,b\in\mathbb{F}_q,x\neq 0
\right\}.
\]
Therefore the action of $T$ on $VX$ can be identified
with the action of $T$ on the right cosets of $T_v$.

If $q\equiv 1\mod 4$, then the centre $C$ of $T$ is contained in $T_v$ and the action of $T$ on $VX$ is unfaithful, a contradiction.  
So $q\equiv 3\mod 4$. Moreover, $T_v$ has four orbits on $T/T_v$ of size $1,1$, $q$ and $q$ respectively. Therefore, $T$ gives rise to only two orbital graphs of out-valency $q$, namely $X_q$ and $X_q^{opp}$. As $X_q\cong X_q^{opp}$, the proof is completed.
\end{proof}
As a consequence of Propositon~\ref{propdim1} we obtain the following corollary.

\begin{corollary}\label{Hamming-lemma}
If $d_H(\alpha,\beta)=1$, then either $\Gamma=\H(m,n)$ or $\Gamma=X_q(n)$.
\end{corollary}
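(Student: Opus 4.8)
The plan is to reduce the statement to Proposition~\ref{propdim1} by isolating the ``one-coordinate'' structure of $\Gamma$ and then showing that $\wedge$-transitivity forces the clique hypothesis of that proposition on the relevant $H$-orbital graph. First I would record that every arc of $\Gamma$ joins vertices at Hamming distance $1$: since $\Gamma$ is arc-transitive we have $A\Gamma=(\beta,\alpha)^G$, and as $G$ preserves the Hamming distance (Remark~\ref{GpreservesHammingDist}) while $d_H(\alpha,\beta)=1$, every arc $(u,v)$ satisfies $d_H(u,v)=1$. Thus $u$ and $v$ can be adjacent in $\Gamma$ only if they agree in all but one coordinate.

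Next I would extract the coordinate graph. Writing $\beta$ so that it differs from $\alpha=(\delta,\dots,\delta)$ only in coordinate $i_0$, with value $\delta_{i_0}\neq\delta$, let $X$ be the $H$-orbital graph $(\delta_{i_0},\delta)^H$ on $\Delta$, so that $\delta_{i_0}\in X^-(\delta)$. Using that each $\pi_i\colon G_i\to H$ is surjective, together with the transitivity of $G$ on $\{T_1,\dots,T_n\}$ and on $A\Gamma$, I would verify the \emph{product description}: $(u,v)\in A\Gamma$ if and only if $u$ and $v$ agree off a single coordinate $i$ and $(u_i,v_i)\in AX$. In particular $\Gamma$ coincides with the adjacency cut out by $X$ on $\Delta^n$, so identifying $X$ identifies $\Gamma$ exactly: if $X$ is complete then $\Gamma=\H(m,n)$ (Example~\ref{example-hamming}), while if $X=X_q$ then $\Gamma=X_q(n)$ (Example~\ref{Ex2}). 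Connectedness of $\Gamma$ forces $X$ to be connected, since one may only alter a coordinate within a connected component of $X$, so the hypotheses of Proposition~\ref{propdim1} on $X$ will be in place once the clique condition is established.

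The crux is to show that any two vertices of $X^-(\delta)$ are adjacent in $X$. Here I would play two kinds of in-neighbour pairs of $\alpha$ against each other. By Remark~\ref{action}, $G_\alpha$ is transitive on the coordinates, so $\alpha$ has in-neighbours differing from it in each coordinate; since $n\geq 2$, choosing two such in-neighbours $u,v$ in distinct coordinates produces a non-adjacent pair with $d_H(u,v)=2$ sharing the out-neighbour $\alpha$, whence $(u,v)\in A^2_+\Gamma$. On the other hand, if $\delta',\delta''$ were distinct non-adjacent vertices of $X^-(\delta)$, then replacing the $i_0$-th coordinate of $\alpha$ by $\delta'$ and by $\delta''$ gives two in-neighbours of $\alpha$ that are non-adjacent in $\Gamma$ (their differing coordinate values are non-adjacent in $X$) and at Hamming distance $1$, producing a pair of $A^2_+\Gamma$ with $d_H=1$. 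Since $G$ preserves Hamming distance and acts transitively on $A^2_+\Gamma$, these two pairs cannot both occur; hence no such non-adjacent pair exists and $X^-(\delta)$ is a clique.

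Applying Proposition~\ref{propdim1} to the connected $H$-orbital graph $X$ at the vertex $\delta$ then yields that either $X$ is the complete graph, giving $\Gamma=\H(m,n)$, or $X=X_q$ for some $q\equiv 3\bmod 4$, giving $\Gamma=X_q(n)$, which completes the proof. I expect the main obstacle to be the careful verification of the product description of $A\Gamma$ in terms of $X$, since this is where the surjectivity of the projections $\pi_i$ and the coordinate-transitivity of $G$ must be combined to pin down $\Gamma$ exactly (and not merely up to isomorphism) and to ensure $X$ is a genuine connected $H$-orbital graph; by contrast, the Hamming-distance comparison forcing the clique condition is the conceptual heart but is short once the product structure is in hand.
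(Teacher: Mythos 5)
Your proposal is correct and follows essentially the same route as the paper's proof: both reduce to Proposition~\ref{propdim1} via the orbital graph $X=(\delta',\delta)^H$, establish that arcs of $\Gamma$ lie at Hamming distance $1$ while pairs in $A^2_+\Gamma$ across distinct coordinates lie at Hamming distance $2$, and use this dichotomy (with transitivity on $A^2_+\Gamma$ and preservation of Hamming distance) to force the clique condition on $X^-(\delta)$, with connectedness of $\Gamma$ transferring to $X$. The only difference is cosmetic ordering: you set up the product description of $A\Gamma$ before deriving the clique condition, whereas the paper derives adjacency within the sets $X_i$ first and then passes to $X$.
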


\begin{proof}
Since $d_H(\alpha,\beta)=1$ and since $G_\alpha$ acts transitively on $\{T_1,\ldots,T_n\}$, replacing $\beta$ by a suitable conjugate under $G_\alpha$ if necessary, we may assume that  $\beta=(\delta',\delta,\ldots,\delta)$, for some $\delta'\in \Delta\setminus\{\delta\}$. In particular,

\begin{eqnarray}\label{eq9}\nonumber
\Gamma^-(\alpha)&=&\beta^{G_\alpha}=\left(\delta'^{H_\delta}\times
\{\delta\}\times \cdots\times \{\delta\}\right)\cup \left(\{\delta\}\times \delta'^{H_\delta}\times
\{\delta\}\times \cdots\times \{\delta\}\right)\\
&&\cup\cdots\cup\left(\{\delta\}\times
\cdots\times \{\delta\}\times\delta'^{H_\delta}\right).
\end{eqnarray}
We denote by $X_1,\ldots,X_n$ the $n$ sets on the right hand side of $\beta^{G_\alpha}$ (for instance $X_1=\delta'^{H_\delta}\times \{\delta\}\times \cdots\times \{\delta\}$). For each $i\in \{1,\ldots,n\}$, any two distinct vertices in $X_i$ are at Hamming distance $1$.  Furthermore, for each  $i,j\in \{1,\ldots,n\}$ with $i\neq j$, and for each $x\in X_i,y\in X_j$, we have that $d_H(x,y)=2$. As $\alpha$ and $\beta$ are adjacent and $d_H(\alpha,\beta)=1$ and as $G$ is transitive on $A^2_+\Gamma$, this shows that if $(u,v)\in A^2_+\Gamma$, then $d_H(u,v)=2$. Furthermore, for each $i\in \{1,\ldots,n\}$, every two vertices $u,v$ of $X_i$ are adjacent because $d_H(u,v)=1$ and $u,v\in \Gamma^{-}(\alpha)$.

Let $X$ be the $H$-orbital graph $(\delta',\delta)^H$ and  let $X'$ be the connected component of $X$ containing $\delta$. Observe that $(\delta,\ldots,\delta,\nu,\delta,\ldots,\delta)\in \Gamma^-(\alpha)$ if and only if $(\nu,\delta)\in AX$.
Set $Y=X'\times \cdots \times X'$ (seen as a subset of $V\Gamma=\Delta^n$). Let $u=(\varepsilon_1,\ldots,\varepsilon_n)$ be an element of $Y$. 
With a computation similar to the case of the vertex $\alpha$, we have 
\begin{eqnarray*}
\Gamma^-(u)&=&\{(\nu,\varepsilon_2,\varepsilon_3,\ldots,\varepsilon_n): (\nu,\varepsilon_1)\in AX\}\cup \{(\varepsilon_1,\nu,\varepsilon_3,\ldots,\varepsilon_n): (\nu,\varepsilon_2)\in AX\}\\
&&\cup\cdots \cup \{(\varepsilon_1,\varepsilon_2,\ldots,\varepsilon_{n-1},\nu): (\nu,\varepsilon_n)\in AX\}.
\end{eqnarray*} For each $i\in \{1,\ldots,n\}$ and $\nu\in VX$ with $(\nu,\varepsilon_i)\in AX$, we have $\nu\in X'$ because $\varepsilon_i\in X'$ and $X'$ is a connected component of $X$. Therefore we obtain $\Gamma^-(u)\subseteq Y$. As $u$ is an arbitrary vertex in $Y$ and $\Gamma$ is connected, we have $Y=\Delta^n$. Hence $VX'=\Delta$, that is, $X$ is connected.

Let $x$ and $y$ be in $X^-(\delta)=\delta'^{H_\delta}$. From~$(\ref{eq9})$, the vertices $\gamma_x=(x,\delta,\ldots,\delta)$ and $\gamma_y=(y,\delta,\ldots,\delta)$ are in $X_1$ and hence are adjacent in $\Gamma$. In particular, it can be shown that any $g \in G_\alpha$ that takes $\gamma_x$ to $\gamma_y$ must fix the first coordinate, and hence that $x$ and $y$ must be adjacent in $X$. Since $x$ and $y$ are arbitrary elements of $X^-(\delta)$, we have that any two vertices in $X^-(\delta)$ are adjacent.

As $X$ is connected, from Proposition~\ref{propdim1} we have that either $X$ is complete (and so $\Gamma=\H(m,n)$) or $X=X_q$ (and so $\Gamma=X_q(n)$).
\end{proof}

\section{The case $d_H(\alpha,\beta)\ge 2$}\label{morethan2}

In this section, we start our analysis of the case in which $d_H(\alpha,\beta)\ge 2$, by showing that if this occurs, then in fact $n=2$ (so $d_H(\alpha,\beta)=2$) as well as some other restrictions.

\begin{lemma}\label{n=2}
Assume that $d_H(\alpha,\beta)=k\geq 2$. If $(u,v)\in A^2_+\Gamma$, then $d_H(u,v)=1$. Furthermore, $n=2$ and neither $\delta_1$ nor $\delta_2$ is fixed   by $T_\delta$.
\end{lemma}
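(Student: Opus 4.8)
The plan is to extract numerical constraints from the two transitivity hypotheses together with the fact (Remark~\ref{GpreservesHammingDist}) that $G$ preserves Hamming distance, and then to use connectivity of $\Gamma$ and the non-regularity of $T$ (Definition~\ref{def}(i)) to kill the degenerate configurations. First I would record a dichotomy. Since $G$ is transitive on $A\Gamma$ and preserves $d_H$, every arc joins vertices at Hamming distance exactly $k$, and likewise every pair in $A^2_+\Gamma$ lies at one fixed distance $d$. Working at $\alpha$, the in-neighbours of $\alpha$ form the orbit $\beta^{G_\alpha}$, and any two distinct in-neighbours share $\alpha$ as a common out-neighbour, so they are either adjacent (distance $k$) or non-adjacent, hence in $A^2_+\Gamma$ (distance $d$). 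Thus distinct in-neighbours of $\alpha$—and by vertex-transitivity of every vertex—are at Hamming distance $d$ or $k$.

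Next I would prove $d=1$, which is the first assertion. The stabiliser of $\alpha$ in $N$ is $N_\alpha=T_\delta\times\cdots\times T_\delta$; it is normal in $G_\alpha$ (as $N$ is normal in $G$), fixes $\alpha$, and so permutes $\beta^{G_\alpha}$ preserving distance to $\alpha$. If some coordinate $\delta_i\neq\delta$ of $\beta$ is moved by $T_\delta$, the element of $N_\alpha$ acting only in that coordinate sends $\beta$ to an in-neighbour at distance $1$; since $k\ge 2$ this pair is non-adjacent, so it lies in $A^2_+\Gamma$ and $d=1$. Otherwise $T_\delta$ fixes every coordinate of $\beta$, so $N_\alpha$ fixes $\beta$; being normal in the transitive $G_\alpha$, it then fixes $\beta^{G_\alpha}$ pointwise, and conjugating by $N$ gives the same at every vertex $v$, namely $N_v$ fixes $\Gamma^-(v)$ pointwise. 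I would check that both the in- and out-neighbours of $\alpha$ then have all coordinates in $F=\mathrm{Fix}_\Delta(T_\delta)$ (for an out-neighbour $\gamma$, apply this at $\gamma$: its in-neighbour $\alpha$ is fixed by $N_\gamma$, forcing $T_{\gamma_i}=T_\delta$ and $\gamma_i\in F$), and that any vertex in $F^n$ has all its neighbours in $F^n$; connectivity then gives $F=\Delta$, so $T_\delta=1$ and $T$ is regular, contradicting Definition~\ref{def}(i). Hence $d=1$, and the dichotomy reads ``distance $1$ or $k$''.

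Finally I would derive $n=2$ from the combinatorics of the supports of the in-neighbours. Since $G_\alpha$ induces a transitive group on the coordinates (Remark~\ref{action}), the supports of the in-neighbours form a single orbit of $k$-subsets. Two supports cannot be disjoint, as that forces distance $2k\notin\{1,k\}$; and when two distinct supports occur, the constraint ``distance $k$'' rigidly fixes how the two in-neighbours overlap on their common coordinates. The decisive step is a ``distance $k+1$'' move: if a moved coordinate $i_0$ lies in the intersection of two distinct supports $A,B$ and the two in-neighbours $\beta,\beta^g$ agree there, then wiggling $\beta$ at $i_0$ by $N_\alpha$ turns that single agreement into a disagreement, giving $d_H(\beta',\beta^g)=d_H(\beta,\beta^g)+1=k+1$, which is impossible. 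Since distinct supports exist exactly when $k<n$ (a transitive coordinate group fixing a proper support setwise is impossible), eliminating this move pushes us to $k=n$, the all-support case; there, changing two moved coordinates at once yields distance $2$, forcing $n=2$, whence $k=2$ and the support is $\{1,2\}$.

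I expect the genuine obstacle to be the last configuration: the all-support case with only a single moved coordinate, and the closely related final claim that neither $\delta_1$ nor $\delta_2$ is fixed by $T_\delta$. Here the Hamming inequalities lose their force, because the maximal possible distance $n$ is attained and no intermediate distance is available to contradict, so the $k+1$ trick cannot be arranged. For these I would fall back on the normal-subgroup and connectivity argument used for $d=1$—showing that a fixed coordinate would again trap coordinate values in $\mathrm{Fix}_\delta(T_\delta)$-type sets and ultimately force $T$ to be regular—together with the transitivity on $A^2_+\Gamma$, which propagates the ``moved coordinate'' structure across both coordinates once $n=2$. This is the step where the hypothesis that $T$ is not regular is indispensable.
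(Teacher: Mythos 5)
Your first step (all pairs in $A^2_+\Gamma$ lie at Hamming distance $1$) is sound and matches the paper: either some coordinate of $\beta$ is moved by $T_\delta$, in which case the $N_\alpha$-wiggle produces a non-adjacent pair of in-neighbours at distance $1$, or $N_\alpha$ fixes $\Gamma^-(\alpha)$ pointwise, and your fixed-point/connectivity argument forcing $T$ to be regular is a correct rephrasing of the paper's contradiction. Your ``distance $k+1$ move'' and the ``change two moved coordinates at once'' step are also genuinely the right moves---they are exactly what the paper does once it knows that each in-neighbour of $\alpha$ has at least \emph{two} coordinates moved by $T_\delta$.

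The genuine gap is the case in which every in-neighbour of $\alpha$ has exactly \emph{one} coordinate moved by $T_\delta$, and this is not a fringe case you can localize to ``$k=n$'': it also blocks your elimination of $k<n$. Your $k+1$ move needs two in-neighbours with distinct supports that agree at a coordinate whose common value is moved by $T_\delta$. Between two in-neighbours with distinct supports $A\neq B$ there are indeed agreement coordinates (exactly $k-|A\cap B|$ of them, by the distance-$k$ constraint), but nothing in your argument prevents all of those agreements from carrying $T_\delta$-fixed values; when each in-neighbour has only one moved coordinate, the values at the moved coordinates of different in-neighbours need not even lie in a common $T_\delta$-orbit, so no $N_\alpha$-wiggle can manufacture the required agreement. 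Your declared fallback for this case---the trapping argument forcing $T$ regular---cannot work either, because that argument needs $N_\alpha$ to fix in-neighbours pointwise (coordinates trapped in $\mathrm{Fix}(T_\delta)$), whereas here $N_\alpha$ genuinely moves one coordinate of every in-neighbour, so nothing propagates. This single-moved-coordinate case is where the paper spends most of its proof: it transports the property ``exactly one coordinate moved'' to every vertex (so $N_\beta$ moves exactly one coordinate of each member of $\Gamma^-(\beta)$), picks $x=\tau(h_1,\ldots,h_n)\in G_\alpha$ with $1^\tau=2$, shows $\beta$ and $\gamma=\beta^x$ must be adjacent, and then uses normality of $T_\delta$ in $H_\delta$ together with equalities of point stabilizers, namely $T_{\delta_1}=T_{\delta_{1^{\tau^{-1}}}^{h_1}}=T_\delta^{h_1}=T_\delta$, to reach a contradiction. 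An argument of this algebraic kind---not a distance count and not a regularity/connectivity argument---is what your proposal is missing; note that the ``furthermore'' claim that neither $\delta_1$ nor $\delta_2$ is fixed by $T_\delta$ \emph{is} precisely this missing statement, not a separate loose end.
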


\begin{proof}
Note that since $d_H(\alpha,\beta)=k \ge 2$, $\beta \in \Gamma^-(\alpha)$, and $G$ is arc-transitive, Remark \ref{GpreservesHammingDist} implies that any pair of adjacent vertices must be at Hamming distance $k \ge 2$.
Suppose that  $N_\alpha$ fixes $\Gamma^-(\alpha)$ point-wise. Since $N\unlhd G$, we have that $N_\gamma$ fixes $\Gamma^-(\gamma)$ point-wise for each vertex $\gamma$, and so by connectedness, $N_\alpha=1$ and $N$ acts regularly on the vertices of $\Gamma$. As we are assuming that $N$ is not regular, we have a contradiction. Therefore $N_\alpha$ does not fix $\Gamma^-(\alpha)$ point-wise. Since $N_\alpha\unlhd G_\alpha$, $\beta\in \Gamma^-(\alpha)$, and $G_{\alpha}$ is transitive on $\Gamma^-(\alpha)$, the group $N_\alpha=T_\delta^n$ does not fix $\beta$ and hence there exists a coordinate $\delta_i$ of $\beta$ with $T_\delta$ not fixing $\delta_i$, for some $i\in \{1,\ldots,n\}$. Using Remark~\ref{action} we see that, replacing $\beta$ by a suitable conjugate under $G_\alpha$ if necessary, we may assume that $i=1$. Let $t$ be in $T_\delta\setminus T_{\delta_1}$, that is, $\delta^t=\delta$ and $\delta_1^t\neq \delta_1$. Now, $g=(t,1,\ldots,1)\in
N_\alpha\subseteq G_\alpha$ and $\gamma=\beta^g\in
\Gamma^-(\alpha)$. Since 
$d_H(\beta,\gamma)=1$ but any pair of adjacent vertices is at Hamming distance $k\ge 2$, we see that $\beta$ and $\gamma$ are not adjacent in $\Gamma$. Therefore since $\beta, \gamma \in \Gamma^-(\alpha)$, we have that $(\beta,\gamma)\in A^2_+\Gamma$. Since $G$ acts transitively on  $A^2_+\Gamma$, Remark \ref{GpreservesHammingDist} implies that all the pairs in $A^2_+\Gamma$ are at Hamming distance $1$, which proves the first part of this lemma.

Suppose that $\beta$ has only one entry not fixed by $T_\delta$.  Using Remark~\ref{action} we see that, replacing $\beta$ by a suitable conjugate if necessary, we may assume that $\delta_1$ (the first coordinate of $\beta$) is not fixed by $T_\delta$ and $\delta_2,\ldots,\delta_n$
 are point-wise fixed by $T_\delta$. Therefore $T_\delta=T_{\delta_i}$ for each $2\leq i\leq n$ and 
\begin{equation}\label{eqNb}
N_\beta=T_{\delta_1}\times T_\delta\times \cdots\times T_\delta\quad\textrm{with}\quad T_\delta\neq T_{\delta_1}.
\end{equation}
Since $N$ is transitive on the vertices of $\Gamma$ and since $N_\alpha\unlhd G_\alpha$, we have that for every vertex $\gamma$ of $\Gamma$ and for every $\nu\in \Gamma^-(\gamma)$, the group $N_\gamma$ acts non-trivially only on one coordinate of $\nu$. Since $G_\alpha$ acts transitively on $\{T_1,\ldots,T_n\}$, there exists $x=\tau(h_1,\ldots,h_n)$ in $G_\alpha\subseteq W_\alpha=H_\delta\wr K$ with $1^\tau=2$. Consider the vertex 
\begin{eqnarray}\label{eqgamma}\gamma=\beta^x&=&(
\delta_{1^{\tau^{-1}}}^{h_1},
\delta_{2^{\tau^{-1}}}^{h_2},
\delta_{3^{\tau^{-1}}}^{h_3},
\ldots,
\delta_{n^{\tau^{-1}}}^{h_n})
=(
\delta_{1^{\tau^{-1}}}^{h_1},
\delta_{1}^{h_2},
\delta_{3^{\tau^{-1}}}^{h_3},
\ldots,
\delta_{n^{\tau^{-1}}}^{h_n})\in \Gamma^-(\alpha)
.
\end{eqnarray}
Since $x \in G_{\alpha}$, we have $\gamma \in \Gamma^-(\alpha)$, so $T_{\delta}$ acts non-trivially on only one coordinate of $\gamma$.
Since $T_\delta$ does not fix $\delta_1$, $T_\delta\unlhd H_\delta$ and $h_2\in H_\delta$, we obtain that  $T_\delta$ does not fix  $\delta_1^{h_2}$. As $T_\delta$ fixes $\delta_2$, we have $\delta_2\neq \delta_1^{h_2}$. Moreover, as $T_\delta$ fixes $\delta_{1^{\tau^{-1}}}^{h_1}$, we have $\delta_1\neq \delta_{1^{\tau^{-1}}}^{h_1}$. So, $d_H(\beta,\gamma)\ge 2$ (the first two coordinates of $\beta$ and $\gamma$ are distinct). Since $\beta,\gamma\in \Gamma^-(\alpha)$, we obtain from the previous paragraph that $(\beta,\gamma)\not\in A^2_+\Gamma$. So $\beta$ and $\gamma$ are adjacent in $\Gamma$, that is, either $\gamma\in \Gamma^-(\beta)$ or $\beta\in \Gamma^-(\gamma)$. Suppose that $\gamma\in \Gamma^-(\beta)$. Thus $N_\beta$ acts non-trivially on only one coordinate of $\gamma$.
Now, as $T_\delta$ does not fix $\delta_1^{h_2}$, we obtain from~\eqref{eqNb} that the second coordinate of $\gamma$ is the only coordinate not fixed by $N_\beta$.  Therefore, from~\eqref{eqNb} and~\eqref{eqgamma}, we have 
\begin{equation}\label{eqagain}
T_{\delta_1}=T_{\delta_{1^{\tau^{-1}}}^{h_1}},\; T_{\delta}\neq T_{{\delta_1}^{h_2}}\;\textrm{ and } T_{\delta}=T_{\delta_{i^{\tau^{-1}}}^{h_i}}\;\textrm{ for }i\in \{3,\ldots,n\}.
\end{equation} Since $1^{\tau^{-1}}\neq 1$, we see from~\eqref{eqNb} that $T_\delta$ fixes $\delta_{1^{\tau^{-1}}}$ and, since $T_\delta\lhd H_\delta$ and $h_1\in H_\delta$, we have 
\[T_{\delta_1}=T_{\delta_{1^{\tau^{-1}}}^{h_1}}=(T_{\delta_1^{\tau^{-1}}})^{h_1}=T_\delta^{h_1}=T_\delta.\]
Thus $T_\delta$ fixes $\delta_1$, contradicting~\eqref{eqNb}. A similar contradiction (along these lines) is obtained by supposing that $\beta\in \Gamma^-(\gamma)$. Therefore $\beta$ must have at least two entries not fixed by $T_\delta$. Now, to conclude the proof, it suffices to show that $n=2$.

Replacing $G$ by a suitable conjugate in $H\wr\Sym(n)$ if necessary, we may assume that the first two coordinates of $\beta$ (that is, $\delta_1$ and $\delta_2$) are not fixed by $T_\delta$.  Let $t'\in T_\delta\setminus
T_{\delta_2}$ so $\delta_2^{t'}\neq \delta_2$, and set
$n'=(t,t',1,\ldots,1)\in N_\alpha$. As $\gamma'=\beta^{n'}$ has exactly two entries different from $\beta$ (namely in the first two coordinates), we see
that $(\beta,\gamma')\notin A^2_+\Gamma$ (the pairs in $A^2_+\Gamma$ are at Hamming distance $1$). Since $\beta,\gamma'\in \Gamma^-(\alpha)$, we see that $\beta$ and $\gamma'$ are adjacent and hence by Remark \ref{GpreservesHammingDist} since $G$ acts arc-transitively, $k=2$. 
In particular, as $\beta$ is adjacent to $\alpha$, we have $\beta=(\delta_1,\delta_2,\delta,\ldots,\delta)$.

Suppose that $n\geq 3$. Since $G_\alpha$ acts transitively on $\{1,\ldots,n\}$, there exists $x=\tau(h_1,\ldots,h_n)\in G_\alpha\leq W_\alpha=H_\delta\wr K$ with $3^\tau=1$. Since the third coordinate of $\beta$ is $\delta$, we obtain that the first coordinate of  $\gamma=\beta^x$ is $\delta$. Since $\beta$ and $\gamma$ each have $n-2$ coordinates equal to $\delta$ and since the first coordinate of $\beta$ is $\delta_1\neq \delta$, we obtain that $d_H(\beta,\gamma)\ge 2$. Therefore $(\beta,\gamma)\not\in A^2_+\Gamma$. As $\beta,\gamma\in \Gamma^-(\alpha)$, the vertices $\beta$ and $\gamma$ are adjacent in $\Gamma$ and in particular $d_H(\beta,\gamma)=k=2$. Since $\beta$ and $\gamma$ differ in the first coordinate and in the $i$th coordinate for some  $i\in \{3,\ldots,n\}$ such that the $i$th coordinate of $\gamma$ is not $\delta$, we obtain that $\gamma=(\delta,\delta_2,\ldots)$, that is, the second coordinate of $\gamma$ equals the second coordinate  of $\beta$. Set $n''=(1,t',1,\ldots,1)\in N_\alpha$. The vertex $\gamma^{n''}$ is adjacent to $\alpha$ and $d_H(\gamma^{n''},\beta)=3$, thus $(\beta,\gamma'')\not\in A^2_+\Gamma$ and $\beta$ and $\gamma''$ are adjacent, contradicting that $k=2$. This yields $n=2$.
\end{proof}


From Lemma~\ref{n=2}, we have $n=2$, $\alpha=(\delta,\delta)$, $\beta=(\delta_1,\delta_2)$ and $\delta_1,\delta_2$ are not fixed by $T_\delta$.
We start our analysis with a rather technical lemma.  This tells us that if two rows (or columns) of 
$\Delta^2$ are in the same $H_\delta$-orbit, then there is an element of $G$ that fixes $\alpha$ and takes the first row (or column) to the second (without exchanging the coordinates).
\begin{lemma}\label{Psfavourite}
For $\varepsilon_1,\varepsilon_2\in\Delta$ and for $i\in \{1,2\}$, we have $\pi_i(G_i\cap G_{(\varepsilon_1,\varepsilon_2)})=H_{\varepsilon_i}$.
\end{lemma}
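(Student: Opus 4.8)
The plan is to first make the two groups in the statement completely explicit, exploiting that we are now in the case $n=2$. Since $n=2$, an element $g=\sigma(h_1,h_2)$ of $G$ normalises $T_i$ exactly when $i^\sigma=i$, which forces $\sigma=1$; hence $G_1=G_2$ is precisely the set of elements of $G$ lying in the base group $H\times H$ (those that do not interchange the two coordinates), and $\pi_i$ is just the projection onto the $i$th factor, which by the normalisation carried out in Section~\ref{intro} we may take to be surjective onto $H$. For such a $g=(h_1,h_2)$ the action formula gives $(\varepsilon_1,\varepsilon_2)^g=(\varepsilon_1^{h_1},\varepsilon_2^{h_2})$, so $g\in G_i\cap G_{(\varepsilon_1,\varepsilon_2)}$ if and only if $h_1\in H_{\varepsilon_1}$ and $h_2\in H_{\varepsilon_2}$. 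With this dictionary in hand the inclusion $\pi_i(G_i\cap G_{(\varepsilon_1,\varepsilon_2)})\subseteq H_{\varepsilon_i}$ is immediate.

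For the reverse inclusion I would argue for $i=1$, the case $i=2$ being symmetric. Given $h_1\in H_{\varepsilon_1}$, surjectivity of $\pi_1$ yields some $g=(h_1,h_2')\in G_1$, but there is no reason for $h_2'$ to fix $\varepsilon_2$. The idea is to correct the second coordinate without disturbing the first. The crucial structural input is that $N=T_1\times T_2\leq G$, so by Remark~\ref{action} every element $(1,t)$ with $t\in T$ lies in $T_2\leq G_1$. Multiplying $g$ on the right by such an element alters only the second coordinate, replacing $h_2'$ by $h_2't$, and leaves $\pi_1(g)=h_1$ untouched. Since $T$ is transitive on $\Delta$, I can choose $t\in T$ with $(\varepsilon_2^{h_2'})^t=\varepsilon_2$; then $g(1,t)=(h_1,h_2't)$ fixes both coordinates of $(\varepsilon_1,\varepsilon_2)$ and projects to $h_1$, giving $h_1\in\pi_1(G_1\cap G_{(\varepsilon_1,\varepsilon_2)})$.

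The only real obstacle is the coupling between the two projections: a lift of $h_1$ provided by $\pi_1$ carries an uncontrolled second coordinate, and the whole content of the lemma is that this coordinate can always be repaired. The repair step is exactly where the product-action-type hypothesis $N\leq G$ and the transitivity of $T$ on $\Delta$ are used (its non-regularity, needed elsewhere, plays no role here); everything else is bookkeeping with the wreath product action. I expect no technical difficulty beyond keeping track of which coordinate is being moved.
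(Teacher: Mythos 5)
Your proof is correct, but it executes the key step differently from the paper, so a comparison is worth recording. The paper argues entirely at the level of subgroups: transitivity of $N$ gives the factorization $G=NG_{(\varepsilon_1,\varepsilon_2)}$; Dedekind's modular law (using $N\leq G_i$) then gives $G_i=N(G_i\cap G_{(\varepsilon_1,\varepsilon_2)})$; applying $\pi_i$ yields $H=TR$ with $R=\pi_i(G_i\cap G_{(\varepsilon_1,\varepsilon_2)})$; and a second application of the modular law inside $H$, combined with the observation $T_{\varepsilon_i}\leq R$ (coming from $T_{\varepsilon_1}\times T_{\varepsilon_2}\leq G_i\cap G_{(\varepsilon_1,\varepsilon_2)}$), gives $H_{\varepsilon_i}=(H_{\varepsilon_i}\cap T)R=T_{\varepsilon_i}R=R$. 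You instead chase elements: lift $h_1$ through $\pi_1$ and repair the second coordinate by an element of $T_2$, which lies in $G_1$ and cannot disturb the first coordinate. Your repair is in fact finer than the paper's: correcting by a general element of $N$ (which is what the subgroup factorization amounts to) perturbs the $i$th projection by an element of $T$, and absorbing that perturbation is precisely why the paper needs $T_{\varepsilon_i}\leq R$ and the second modular-law step; correcting with $T_2$ alone leaves the projection literally untouched and eliminates that step. What the paper's formulation buys in exchange is independence from the $n=2$ normal form: your identification $G_1=G_2=G\cap(H\times H)$ is special to $n=2$ (though your argument would adapt to general $n$ by repairing each coordinate $j\neq i$ with $T_j$), whereas the paper's computation never mentions coordinates at all. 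One small slip: Remark~\ref{action} is not the right citation for $(1,t)\in T_2\leq G_1$; that fact follows directly from $T_2\leq N\leq G$ together with the observation that base-group elements normalize $T_1$ (since $H$ normalizes $T$). This is cosmetic, not a gap.
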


\begin{proof}
Fix $i\in \{1,2\}$ and write $R=\pi_i(G_i\cap G_{(\varepsilon_1,\varepsilon_2)})$. Since $G_{(\varepsilon_1,\varepsilon_2)}\leq W_{(\varepsilon_1,\varepsilon_2)}$, we see that $R\leq H_{\varepsilon_i}$. Also, as $T_{\varepsilon_1}\times T_{\varepsilon_2}\leq
G_{(\varepsilon_1,\varepsilon_2)}$, we see that $T_{\varepsilon_i}\leq R$.

Since $N$ is transitive on the vertices of $\Gamma$, we have
$G=NG_{(\varepsilon_1,\varepsilon_2)}$. Furthermore, since $N\leq G_i$, from the ``modular
law'', we obtain $G_i=NG_{(\varepsilon_1,\varepsilon_2)}\cap G_i=N(G_i\cap G_{(\varepsilon_1,\varepsilon_2)})$. Applying
$\pi_i$ on both sides of this equality, we get
$H=\pi_i(G_i)=\pi_i(T)\pi_i(G_i\cap G_{(\varepsilon_1,\varepsilon_2)})=TR$. Using again the ``modular
law'', we see that $H_{\varepsilon_i}=H_{\varepsilon_i}\cap TR=(H_{\varepsilon_i}\cap T)R=T_{\varepsilon_i}
R=R$.
\end{proof}

In what follows we use Lemma~\ref{Psfavourite} with $\varepsilon_1=\varepsilon_2=\delta$ (except in the proof of Lemma~\ref{abconstant-directed}, where we need it in its full generality).

The following two facts hardly deserve to be called lemmas, but will be used several times.

\begin{lemma}\label{vx-nbrs-adjacent}
If $\gamma_1, \gamma_2\in\Gamma^{-}(\gamma)$ and $d_H(\gamma_1,\gamma_2)=2$, then $\gamma_1$ and $\gamma_2$ are adjacent.
\end{lemma}

\begin{proof}
Either $(\gamma_1, \gamma_2) \in A^2_+\Gamma$, or $\gamma_1$ is adjacent to $\gamma_2$. But Lemma~\ref{n=2} says that if $(\gamma_1,\gamma_2) \in A^2_+\Gamma$, then $d_H(\gamma_1,\gamma_2)=1$, a contradiction.  
\end{proof}

\begin{lemma}\label{same-row-nonadjacent}
If $d_H(\gamma, \gamma')=1$, then $\gamma_1$ and $\gamma_2$ are not adjacent.
\end{lemma}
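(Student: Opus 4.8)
The plan is to read off the conclusion from a single structural feature of the situation we have now reached: in the regime $d_H(\alpha,\beta)\geq 2$, every arc of $\Gamma$ joins two vertices that are at Hamming distance exactly $2$. Indeed, Lemma~\ref{n=2} tells us that $n=2$ and $k=d_H(\alpha,\beta)=2$, so the distinguished arc $(\beta,\alpha)$ already has its endpoints at Hamming distance $2$. Since $G$ acts transitively on $A\Gamma$, every arc has the form $(\beta,\alpha)^g=(\beta^g,\alpha^g)$ for some $g\in G$, and by Remark~\ref{GpreservesHammingDist} the group $G$ preserves the Hamming distance; hence $d_H(\beta^g,\alpha^g)=d_H(\beta,\alpha)=2$. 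Thus the two endpoints of any arc of $\Gamma$ are at Hamming distance $2$.

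With this in hand I would finish as follows. Two vertices $\gamma$ and $\gamma'$ are \emph{adjacent} precisely when at least one of the ordered pairs $(\gamma,\gamma')$ or $(\gamma',\gamma)$ belongs to $A\Gamma$. By the observation above, either possibility forces $d_H(\gamma,\gamma')=2$. Arguing contrapositively, if $d_H(\gamma,\gamma')=1$, then $d_H(\gamma,\gamma')\neq 2$, so neither ordered pair can be an arc, and therefore $\gamma$ and $\gamma'$ are not adjacent, which is exactly the assertion.

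I do not expect any genuine obstacle here: the statement is a one-line consequence of arc-transitivity together with the Hamming-distance invariance recorded in Remark~\ref{GpreservesHammingDist} and the value $k=2$ established in Lemma~\ref{n=2}. The only point I would take care to state explicitly is that, although $\Gamma$ is directed, adjacency is a symmetric relation, so one must exclude arcs in \emph{both} orientations; since both orientations are subject to the same Hamming-distance constraint of $2$, this costs nothing and the argument goes through verbatim.
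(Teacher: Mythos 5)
Your proof is correct and is essentially the paper's own argument: the paper's one-line proof cites Remark~\ref{GpreservesHammingDist}, the content of which (combined with arc-transitivity and $d_H(\alpha,\beta)=k\geq 2$, as spelled out at the start of the proof of Lemma~\ref{n=2}) is exactly your observation that every pair of adjacent vertices lies at Hamming distance $k\geq 2$, so vertices at Hamming distance $1$ cannot be adjacent. You simply make explicit the details (including the symmetry of adjacency for digraphs) that the paper leaves implicit.
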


\begin{proof}This is simply a reminder of Remark~\ref{GpreservesHammingDist}.
\end{proof}

We require some information about neighbourhoods.  These will be used in our proofs of both the undirected and directed cases. Since each of these results applies (with the same proof) to any one of $\Gamma(\alpha')$, $\Gamma^+(\alpha')$ and $\Gamma^-(\alpha')$ for the appropriate choice of $\alpha'$, we introduce the notation $\Gamma^*(\alpha')$.  This notation
will be used to indicate that the result holds when ``$\Gamma^*$'' is replaced by any one of ``$\Gamma$'', ``$\Gamma^+$'', or ``$\Gamma^-$''.

We have $\beta=(\delta_1, \delta_2) \in \Gamma^-(\alpha)$ (in the undirected case, this is $\Gamma(\alpha)$).  Let $\gamma=(\delta_1', \delta_2') \in \Gamma^+(\alpha)$.  Clearly: 
\begin{eqnarray*}\Gamma(\alpha)=\beta^{G_\alpha}\subseteq
\beta^{W_\alpha}&=&\left(\delta_1^{H_\delta}\times
\delta_2^{H_\delta}\right)\cup\left(\delta_2^{H_\delta}\times
\delta_1^{H_\delta}\right);\\
\Gamma^-(\alpha)=\beta^{G_\alpha}\subseteq
\beta^{W_\alpha}&=&\left(\delta_1^{H_\delta}\times
\delta_2^{H_\delta}\right)\cup\left(\delta_2^{H_\delta}\times
\delta_1^{H_\delta}\right); \text{ and}\\
\Gamma^+(\alpha)=\gamma^{G_\alpha}\subseteq
\gamma^{W_\alpha}&=&\left((\delta_1')^{H_\delta}\times
(\delta_2')^{H_\delta}\right)\cup\left((\delta_2')^{H_\delta}\times
(\delta_1')^{H_\delta}\right).
\end{eqnarray*}
Thus $\Gamma(\alpha)$ (in the undirected case) or $\Gamma^-(\alpha)$ (in the directed case) is the
disjoint union of $N_\alpha$-orbits each of which is a ``rectangle'' with
$|\delta_1^{H_{\delta}}|$ rows and $|\delta_2^{H_{\delta}}|$ columns or with $|\delta_2^{H_{\delta}}|$ rows and $|\delta_1^{H_{\delta}}|$ columns. If $\delta_1$ and $\delta_2$ are in the same $H_\delta$-orbit then these two rectangles will instead be a single square.  Similarly, $\Gamma^+(\alpha)$ has the same structure.

\begin{lemma}\label{abconstant}
Fix any $\delta' \in \Delta$.  For each $\varepsilon \in \delta'^{H_{\delta}}$, the number $a$ of $\nu \in \Delta$ with $(\varepsilon, \nu) \in \Gamma^*(\alpha)$ is equal to the number of $\nu \in \Delta$ with $(\nu, \varepsilon) \in \Gamma^*(\alpha)$ and depends only on the $H_\delta$-orbit $\delta'^{H_\delta}$ (and not on the element $\varepsilon$).

Furthermore, if $(\varepsilon_1, \varepsilon_2) \in \Gamma^*(\alpha)$ and $\varepsilon_1^{H_\delta}=\varepsilon_2^{H_\delta}$, then $|\Gamma^*(\alpha)|=a|\varepsilon_1^{H_\delta}|$.
\end{lemma}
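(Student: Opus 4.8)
The plan is to prove both assertions from the $G_\alpha$-invariance of $\Gamma^*(\alpha)$, using the surjective projections given by Lemma~\ref{Psfavourite} and the coordinate swap supplied by Remark~\ref{action}. For $\varepsilon\in\Delta$ set $r(\varepsilon)=|\{\nu\in\Delta:(\varepsilon,\nu)\in\Gamma^*(\alpha)\}|$ and $c(\varepsilon)=|\{\nu\in\Delta:(\nu,\varepsilon)\in\Gamma^*(\alpha)\}|$; the first statement then asserts $r(\varepsilon)=c(\varepsilon)$ and that this value depends only on $\varepsilon^{H_\delta}$.

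First I would show that $r$ is constant on each $H_\delta$-orbit. By Lemma~\ref{Psfavourite} with $\varepsilon_1=\varepsilon_2=\delta$ we have $\pi_1(G_1\cap G_\alpha)=H_\delta$, so for each $h\in H_\delta$ there is $g\in G_1\cap G_\alpha$ with $\pi_1(g)=h$. Since $n=2$ and $g\in G_1=N_G(T_1)$ fixes the first coordinate position, $g$ interchanges no coordinates, so $g=(h,h_2)$ with $h_2\in H_\delta$; being an automorphism fixing $\alpha$, it preserves $\Gamma^*(\alpha)$. Thus $(\varepsilon,\nu)\in\Gamma^*(\alpha)$ if and only if $(\varepsilon^h,\nu^{h_2})\in\Gamma^*(\alpha)$, and the bijection $\nu\mapsto\nu^{h_2}$ witnesses $r(\varepsilon)=r(\varepsilon^h)$. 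As $h$ ranges over $H_\delta$, $r$ is constant on $\varepsilon^{H_\delta}$. The same argument with $\pi_2(G_2\cap G_\alpha)=H_\delta$, using elements of the form $(h_1,h)$, shows that $c$ is constant on each $H_\delta$-orbit.

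To identify $r$ with $c$ I would invoke the swap. By Remark~\ref{action}, $G_\alpha$ is transitive on $\{T_1,T_2\}$, so there is $s=(1\,2)(h_1,h_2)\in G_\alpha$ with $h_1,h_2\in H_\delta$, acting by $(\varepsilon,\nu)\mapsto(\nu^{h_1},\varepsilon^{h_2})$. Since $s$ preserves $\Gamma^*(\alpha)$, it maps the pairs of $\Gamma^*(\alpha)$ with first coordinate $\varepsilon$ bijectively onto those with second coordinate $\varepsilon^{h_2}$, so $r(\varepsilon)=c(\varepsilon^{h_2})$. As $\varepsilon^{h_2}\in\varepsilon^{H_\delta}$ and $c$ is orbit-constant, we get $r(\varepsilon)=c(\varepsilon)$, and both equal a value $a$ depending only on $\varepsilon^{H_\delta}$. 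I expect this step to be the crux: the swap is precisely what turns a row count into a column count pointwise, whereas the orbit-constancy arguments are routine applications of Lemma~\ref{Psfavourite}.

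Finally, for the second statement I would use that $\Gamma^*(\alpha)$ is a single $G_\alpha$-orbit $\theta^{G_\alpha}$ contained in $\theta^{W_\alpha}=(\theta_1^{H_\delta}\times\theta_2^{H_\delta})\cup(\theta_2^{H_\delta}\times\theta_1^{H_\delta})$, where $\theta=(\theta_1,\theta_2)$ is the representative ($\beta$ or $\gamma$) appearing in the displayed decompositions above. If $(\varepsilon_1,\varepsilon_2)\in\Gamma^*(\alpha)$ with $\varepsilon_1^{H_\delta}=\varepsilon_2^{H_\delta}$, then this pair lies in one of the two rectangles, which forces $\theta_1^{H_\delta}=\theta_2^{H_\delta}=\mathcal{O}$, where $\mathcal{O}=\varepsilon_1^{H_\delta}$; hence $\theta^{W_\alpha}=\mathcal{O}\times\mathcal{O}$ and so $\Gamma^*(\alpha)\subseteq\mathcal{O}\times\mathcal{O}$. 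Summing over first coordinates then gives $|\Gamma^*(\alpha)|=\sum_{\varepsilon\in\mathcal{O}}r(\varepsilon)=a|\mathcal{O}|=a|\varepsilon_1^{H_\delta}|$, as required.
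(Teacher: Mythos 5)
Your proof is correct and follows essentially the same route as the paper: Lemma~\ref{Psfavourite} supplies diagonal elements of $G_\alpha$ giving constancy along $H_\delta$-orbits, the coordinate swap from Remark~\ref{action} converts row counts into column counts, and the rectangle containment $\Gamma^*(\alpha)\subseteq\varepsilon_1^{H_\delta}\times\varepsilon_1^{H_\delta}$ yields the final count. The only (cosmetic) difference is that the paper composes the swap with a correcting element so that it fixes the second coordinate pointwise, whereas you use the raw swap and then appeal to the separately established orbit-constancy of the column count.
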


\begin{proof}
Fix $\varepsilon$ in $(\delta')^{H_\delta}$ and let $h_1$ be in $H_\delta$ with $\varepsilon^{h_1}=\delta'$. From Lemma~\ref{Psfavourite}, there exists $h_2\in H_\delta$ such that $g=(h_1,h_2)\in G_\alpha$. In particular, applying the automorphism $g$ we see that if $\nu_1,\ldots,\nu_a$ are the elements of $\Delta$ with $(\varepsilon,\nu_i)\in \Gamma^*(\alpha)$, then $\nu_1^{h_2},\ldots,\nu_a^{h_2}$ are exactly the elements of $\Delta$ with $(\delta',\nu_i^{h_2})\in \Gamma^*(\alpha)$. This shows that the number $a$ does not depend on the choice of $\varepsilon$ in $(\delta')^{H_\delta}$.

Now we show that there are exactly $a$ elements $\nu$ in $\Delta$ with $(\nu,\delta')\in \Gamma^*(\alpha)$. Let $\nu_1,\ldots,\nu_a$ be the elements of $\Delta$ with $(\delta',\nu_i)\in\Gamma^*(\alpha)$. Since $G_\alpha$ is transitive on $\{T_1,T_2\}$, there exists $x=(1\ 2)(t_1,t_2)\in G_\alpha$. As $t_2\in H_\delta$, from Lemma~\ref{Psfavourite} there exists $h_1\in H_\delta$ such that $y=(h_1,t_2^{-1})\in G_\alpha$. Now, $z=xy=(1\ 2)(t_1h_1,1)\in G_\alpha$ and $(\delta',\nu_i)^z=(\nu_i^{t_1h_1},\delta')$, for $i\in \{1,\ldots,a\}$, are exactly the elements in $\Gamma^*(\alpha)$ with second coordinate $\delta'$.

If $(\varepsilon_1, \varepsilon_2) \in \Gamma^*(\alpha)$ and $\varepsilon_1^{H_\delta}=\varepsilon_2^{H_\delta}$, then $\Gamma^*(\alpha) \subseteq \varepsilon_1^{H_\delta}\times \varepsilon_1^{H_\delta}$, and by Lemma~\ref{Psfavourite}, there are elements of $\Gamma^*(\alpha)$ with every possible first coordinate from $\varepsilon_1^{H_{\delta}}$.  By the earlier part of this lemma, there are exactly $a$ such elements for every possible first coordinate, making $|\Gamma^*(\alpha)|=a|\varepsilon_1^{H_\delta}|$, as claimed.
\end{proof}

\subsection{$\Gamma$ is undirected.}\label{und} We limit our attention to the undirected case first, which will prove easiest to complete.
We reserve the letters $a$ and $b$ to denote the numbers defined in Lemma~\ref{abconstant} that come from choosing $\delta'=\delta_1$ and $\delta'=\delta_2$, respectively. As $T_\delta$ does not fix either $\delta_1$ or $\delta_2$, we have $a,b\geq 2$.

A subset $X\subseteq V\Gamma$ is said to be {\em independent} if any two elements of $X$ are non-adjacent.

\begin{lemma}\label{ab-max-independent}
If $\gamma$ is any vertex of $\Gamma$ and $(\varepsilon_1,\varepsilon_2)=\gamma'\in \Gamma(\gamma)$, then $\{(\nu_1,\nu_2)\in\Gamma(\gamma): \nu_1=\varepsilon_1\}$ and $\{(\nu_1,\nu_2)\in\Gamma(\gamma): \nu_2=\varepsilon_2\}$ are the only maximal independent sets in $\Gamma(\gamma)$ containing $\gamma'$. Moreover, the cardinalities of these sets are $a$ and $b$ (respectively).
\end{lemma}

\begin{proof}
Write $X_1=\{(\nu_1,\nu_2)\in\Gamma(\gamma): \nu_1=\varepsilon_1\}$
and $X_2=\{(\nu_1,\nu_2)\in\Gamma(\gamma): \nu_2=\varepsilon_2\}$. Lemma~\ref{same-row-nonadjacent} shows that $X_1$ and $X_2$ are independent sets (both containing $\gamma'$), and Lemma~\ref{vx-nbrs-adjacent} implies that no other vertex of $\Gamma(\gamma)$ is independent from $\gamma'$, so these independent sets are maximal and there are no others. The cardinality follows from Lemma~\ref{abconstant}.
\end{proof}

\begin{corollary}\label{indep-sets}
If $b\neq a$, then for each vertex $\gamma$, the neighbourhood $\Gamma(\gamma)$  can be uniquely decomposed into a disjoint union of independent sets of cardinality $b$.
\end{corollary}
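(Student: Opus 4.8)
The plan is to read the decomposition straight off the local structure already recorded in Lemma~\ref{ab-max-independent}. By vertex-transitivity of $G$ it suffices to treat the case $\gamma=\alpha$. Recall that Lemma~\ref{ab-max-independent} furnishes, through every neighbour $\gamma'=(\varepsilon_1,\varepsilon_2)$ of $\alpha$, exactly two maximal independent sets of $\Gamma(\alpha)$: the ``row'' $\{(\nu_1,\nu_2)\in\Gamma(\alpha):\nu_1=\varepsilon_1\}$ and the ``column'' $\{(\nu_1,\nu_2)\in\Gamma(\alpha):\nu_2=\varepsilon_2\}$, and that their cardinalities are (in some order) $a$ and $b$. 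Since $a\neq b$, precisely one of these two maximal independent sets has cardinality $b$; I denote it $B(\gamma')$. The whole proof will consist in showing that the sets $B(\gamma')$ are exactly the parts of the asserted decomposition, and that no other decomposition into independent sets of size $b$ is possible.

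First I would settle existence. If two of these sets, say $B(\gamma')$ and $B(\gamma'')$, meet in a common vertex $w$, then both are maximal independent sets of cardinality $b$ passing through $w$; by the uniqueness clause of Lemma~\ref{ab-max-independent} (which says there are only two maximal independent sets through $w$, of distinct sizes $a$ and $b$) they must coincide. Hence the family $\{B(\gamma'):\gamma'\in\Gamma(\alpha)\}$ consists of pairwise-disjoint independent sets, each of cardinality $b$, whose union is all of $\Gamma(\alpha)$. This is the required decomposition.

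For uniqueness, let $\mathcal D$ be \emph{any} decomposition of $\Gamma(\alpha)$ into independent sets of cardinality $b$, let $S\in\mathcal D$, and pick $\gamma'\in S$. Extending $S$ to a maximal independent set through $\gamma'$ and applying Lemma~\ref{ab-max-independent}, we find that $S$ is contained either in $B(\gamma')$ (of size $b$) or in the \emph{other} maximal independent set through $\gamma'$ (of size $a$). In the first case $|S|=b=|B(\gamma')|$ forces $S=B(\gamma')$, as desired. The entire difficulty, and the only place where the hypothesis $a\neq b$ is used, is excluding the second case: a priori a size-$b$ independent set could sit as a \emph{proper} subset of a ``long'' maximal independent set of size $a$ (this can genuinely happen when $a>b$, and is the reason the hypothesis cannot be dropped). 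To rule it out I would exploit the symmetry of the two coordinates: by Remark~\ref{action} the group $G_\alpha$ is transitive on $\{T_1,T_2\}$, so its coordinate-swapping element interchanges the two $H_\delta$-orbits $\delta_1^{H_\delta}$ and $\delta_2^{H_\delta}$ and therefore interchanges the two values $a$ and $b$; thus we may normalise to $b>a$. Then $|S|=b>a$ makes it impossible for $S$ to lie inside a maximal independent set of size $a$, so necessarily $S\subseteq B(\gamma')$ and hence $S=B(\gamma')$. Consequently $\mathcal D=\{B(\gamma'):\gamma'\in\Gamma(\alpha)\}$, which establishes uniqueness. The main obstacle, as indicated, is this last exclusion step; everything else is a direct reading of Lemma~\ref{ab-max-independent}.
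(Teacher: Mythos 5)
Your existence argument --- through each vertex of $\Gamma(\gamma)$ there is, since $a\neq b$, a unique maximal independent set of cardinality $b$, and two such sets meeting in a common vertex must coincide, so they partition $\Gamma(\gamma)$ --- is precisely the paper's proof, and is fine. The problem is your uniqueness step, and it is a genuine error, not a stylistic difference. The coordinate-swapping element of $G_\alpha$ does \emph{not} interchange $a$ and $b$: by Lemma~\ref{abconstant} these numbers are attached to the $H_\delta$-orbits $\delta_1^{H_\delta}$ and $\delta_2^{H_\delta}$, not to a choice of coordinate, and an element $(1\ 2)(h_1,h_2)\in G_\alpha$ carries the size-$a$ sets $\{(\nu_1,\nu_2)\in\Gamma(\alpha):\nu_1=\varepsilon\}$ (with $\varepsilon\in\delta_1^{H_\delta}$) onto the size-$a$ sets $\{(\nu_1,\nu_2)\in\Gamma(\alpha):\nu_2=\varepsilon'\}$ with $\varepsilon'\in\delta_1^{H_\delta}$. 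Being a graph automorphism fixing $\alpha$, it permutes the maximal independent sets of $\Gamma(\alpha)$ \emph{preserving cardinality}; no group element can swap two unequal cardinalities. So the normalisation ``$b>a$'' is unavailable: $b$ is a fixed number, and if in fact $b<a$, your Case~2 --- a part of $\mathcal{D}$ sitting as a proper $b$-subset of a maximal independent set of size $a$ --- is left completely open. (Relabelling $\beta$ does not rescue this: by that very symmetry the corollary asserts the conclusion for both values, so the smaller of the two must be handled regardless.)

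Worse, Case~2 cannot be excluded using only the lemmas you cite. A configuration in which $\Gamma(\gamma)$ contains a full $2\times 3$ grid, with two ``rows'' of size $a=3$ and three ``columns'' of size $b=2$, is consistent with Lemmas~\ref{same-row-nonadjacent}, \ref{vx-nbrs-adjacent}, \ref{abconstant} and~\ref{ab-max-independent}, yet admits a second decomposition into independent $2$-sets: two horizontal pairs from the rows plus the remaining vertical pair. This is why the paper does not attempt what you attempt: its proof (and its reading of ``uniquely decomposed'') establishes only that each vertex lies in a unique \emph{maximal} independent set of cardinality $b$, so that these sets form a canonical, pairwise disjoint family covering $\Gamma(\gamma)$; decompositions into possibly non-maximal independent $b$-sets are never considered. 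Under that reading your Case~2 never arises and your first paragraph already completes the proof; under your stronger reading, the statement is not provable from the cited lemmas at all.
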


\begin{proof}
Since $b \neq a$, Lemma~\ref{ab-max-independent} says that every neighbour of $\gamma$ lies in a unique maximal independent set of cardinality $b$.  The uniqueness means that these sets must be disjoint.  The result follows.
\end{proof}

\begin{lemma}\label{a-ge-2-nbrs}
The vertices of $\Gamma(\beta) \setminus \Gamma(\alpha)$ are: $\alpha$, $(\delta, \nu)$ for every $\nu$ such that $(\delta_1,\nu) \in \Gamma(\alpha)$, and  $(\nu, \delta)$ for every $\nu$ such that $(\nu,\delta_2) \in \Gamma(\alpha)$.
\end{lemma}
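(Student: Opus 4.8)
The plan is to determine $\Gamma(\beta)\setminus\Gamma(\alpha)$ by sorting the neighbours of $\beta$ according to their Hamming distance from $\alpha=(\delta,\delta)$, which (since $n=2$) can only be $0$, $1$ or $2$. Recall from the proof of Lemma~\ref{n=2} that adjacency forces Hamming distance $2$; hence a neighbour $w$ of $\beta$ can lie in $\Gamma(\alpha)$ only when $d_H(w,\alpha)=2$, while every vertex at distance $0$ or $1$ from $\alpha$ is automatically \emph{not} in $\Gamma(\alpha)$ (by Lemma~\ref{same-row-nonadjacent} in the distance-$1$ case, and trivially for $\alpha$ itself). So the statement reduces to two claims: no neighbour of $\beta$ at Hamming distance $2$ from $\alpha$ can be missing from $\Gamma(\alpha)$, and the neighbours of $\beta$ lying at distance $\le 1$ from $\alpha$ are exactly the vertices listed.

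The distance-$2$ claim is the conceptual heart, and it is the place where the $\wedge$-transitivity hypothesis is used. I would argue by contradiction: if $w\in\Gamma(\beta)$ with $d_H(w,\alpha)=2$ but $w\notin\Gamma(\alpha)$, then $w$ and $\alpha$ are distinct and non-adjacent, yet they share the common neighbour $\beta$. As $\Gamma$ is undirected, $\beta\in\Gamma^+(w)\cap\Gamma^+(\alpha)$, so $(w,\alpha)\in A^2_+\Gamma$, and Lemma~\ref{n=2} forces $d_H(w,\alpha)=1$, contradicting $d_H(w,\alpha)=2$. Thus every such $w$ is adjacent to $\alpha$ and contributes nothing to the difference. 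This is essentially a one-line application of the hypothesis, and I expect no difficulty here.

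It remains to identify the neighbours of $\beta$ at distance $\le 1$ from $\alpha$. The unique distance-$0$ vertex is $\alpha$, which is adjacent to $\beta$ by hypothesis. A distance-$1$ neighbour of $\beta$ shares exactly one coordinate with $\alpha$, so has the form $(\delta,\nu)$ or $(\nu,\delta)$ with $\nu\ne\delta$; to be adjacent to $\beta=(\delta_1,\delta_2)$ it must differ from $\beta$ in both coordinates, forcing $\nu\ne\delta_2$ in the first family and $\nu\ne\delta_1$ in the second. To pin down which $\nu$ occur I would invoke arc-transitivity: choosing $g\in G$ that interchanges the adjacent pair $\alpha,\beta$, the map $g$ preserves Hamming distance (Remark~\ref{GpreservesHammingDist}) and carries $\Gamma(\alpha)$ onto $\Gamma(\beta)$, hence sends the distance-$1$-from-$\beta$ neighbours of $\alpha$ bijectively onto the distance-$1$-from-$\alpha$ neighbours of $\beta$. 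The former are precisely the vertices $(\delta_1,\nu)\in\Gamma(\alpha)$ with $\nu\ne\delta_2$ together with the vertices $(\nu,\delta_2)\in\Gamma(\alpha)$ with $\nu\ne\delta_1$; writing $g=\sigma(h_1,h_2)$ and tracking coordinates shows the images have first (respectively second) coordinate $\delta$, producing the two families $(\delta,\nu)$ and $(\nu,\delta)$ of the statement, and a count via Lemma~\ref{abconstant} gives their sizes as $a-1$ and $b-1$.

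The step I expect to be the main obstacle is this last identification, and specifically making the parametrisation exact rather than merely correct in type and cardinality. Imposing $\alpha^g=\beta$ and $\beta^g=\alpha$ forces the second-coordinate component $h_2$ of $g$ to interchange $\delta$ and $\delta_2$, so transporting $\Gamma(\alpha)$ by $g$ relabels the second coordinate: the resulting set of admissible $\nu$ is \emph{a priori} an $h_2$-twist of $\{\nu:(\delta_1,\nu)\in\Gamma(\alpha)\}$ rather than that set verbatim. To recover the clean labels in the statement, and to discard the two degenerate values $\nu=\delta_2,\delta_1$, I would instead construct, for each relevant $\nu$, a single element of $G$ realising $\alpha\mapsto\beta$ and $\beta\mapsto(\delta,\nu)$ directly; the existence of such elements is exactly what Lemma~\ref{Psfavourite} is designed to guarantee, since it lifts the required coordinate stabilisers to automorphisms. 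Assembling these lifts into the asserted set equality is the only genuinely technical point.
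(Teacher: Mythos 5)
Your opening reduction is sound and coincides with the paper's: Lemma~\ref{same-row-nonadjacent} plus your one-line $A^2_+$ argument (which is exactly Lemma~\ref{vx-nbrs-adjacent}) give $\Gamma(\beta)\setminus\Gamma(\alpha)=\{\alpha\}\cup\{w\in\Gamma(\beta):d_H(w,\alpha)=1\}$. The gap is in your third step, the identification of $\{w\in\Gamma(\beta):d_H(w,\alpha)=1\}$, which is the entire content of the lemma; and your proposed repair via Lemma~\ref{Psfavourite} is circular, so it cannot be carried out.

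Concretely, you want, for each $\nu$ with $(\delta_1,\nu)\in\Gamma(\alpha)$, an element $g\in G$ with $\alpha^g=\beta$ and $\beta^g=(\delta,\nu)$. The existence of such a $g$ is \emph{equivalent} to what is to be proved: if $g$ exists then $(\delta,\nu)=\beta^g\in\Gamma(\alpha^g)=\Gamma(\beta)$, and conversely if $(\delta,\nu)\in\Gamma(\beta)$ then arc-transitivity produces $g$. So one must build $g$ from the hypotheses, and Lemma~\ref{Psfavourite} does not do this. Composing with an element $g_0$ that interchanges $\alpha$ and $\beta$, what you need is $h\in G_\beta$ with $\alpha^h=(\delta,\nu)$, i.e.\ an element of $G_\beta$ whose first component \emph{fixes} $\delta$ while its second component sends $\delta$ to $\nu$ --- a simultaneous condition on both components. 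Lemma~\ref{Psfavourite} only asserts $\pi_i(G_i\cap G_\beta)=H_{\delta_i}$: it lets you prescribe one component, leaving the other completely uncontrolled; moreover even the prescribed component would have to lie in $H_{\delta_2}$, so you would also need $\nu\in\delta^{H_{\delta_2}}$, which is not known (you only know $\nu\in\delta_1^{H_\delta}\cup\delta_2^{H_\delta}$). The most such manipulations yield is $(\delta,\nu)\in\Gamma(\beta)$ for $\nu\in\delta^{T_{\delta_2}}$, via $h=(1,t)\in N_\beta$, and nothing identifies the orbit $\delta^{T_{\delta_2}}$ with the set $\{\nu:(\delta_1,\nu)\in\Gamma(\alpha)\}$. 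This is exactly why the paper's proof is not a transitivity argument: it assumes $(\delta,\nu)\in\Gamma(\beta)$ with $(\delta_1,\nu)\notin\Gamma(\alpha)$, forms the columns $X_\alpha=\{\eta:(\eta,\nu)\in\Gamma(\alpha)\}$ and $X_\beta=\{\eta:(\eta,\nu)\in\Gamma(\beta)\}$, and runs a combinatorial contradiction through the maximal-independent-set structure of the neighbourhoods (Lemma~\ref{ab-max-independent}): the assumption forces $X_\beta=X_\alpha\cup\{\delta\}$, hence $b=a+1$, and then two distinct maximal independent sets of the same size through a common vertex are produced, which Lemma~\ref{ab-max-independent} forbids. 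An argument of this combinatorial kind --- not transport of configurations by group elements --- is what your proposal is missing.
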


\begin{proof}
From Lemmas~\ref{vx-nbrs-adjacent} and~\ref{same-row-nonadjacent}, we see that the elements of $\Gamma(\beta)\setminus(\Gamma(\alpha)\cup\{\alpha\})$ are of the form $(\delta,\nu)$ or $(\nu,\delta)$, for some $\nu\in \Delta\setminus\{\delta\}$. Let $\nu$ be in $\Delta$ with $(\delta,\nu)\in \Gamma(\beta)$. We need to show that $(\delta_1,\nu)\in \Gamma(\alpha)$. We argue by contradiction, so we assume that $(\delta_1,\nu)\not\in\Gamma(\alpha)$. Write $X_\alpha=\{\eta\in\Delta : (\eta,\nu)\in \Gamma(\alpha)\}$ and $X_\beta=\{\eta\in\Delta : (\eta,\nu)\in \Gamma(\beta)\}$.

Lemma~\ref{ab-max-independent} shows that $|X_{\beta}|$ is either $a$ or $b$.  Since $a$ and $b$ are each at least 2, $|X_{\beta}| \ge 2$, so Lemma~\ref{vx-nbrs-adjacent} implies that $X_\alpha \neq \emptyset$.  Hence 
$|X_\alpha|$ is also either $a$ or $b$. Now, replacing $a$ by $b$ if necessary, we may assume that $b\geq a$. Since $\alpha$ and $\beta$ are adjacent, Lemmas~\ref{vx-nbrs-adjacent} and~\ref{same-row-nonadjacent} yield that $X_\alpha\setminus\{\delta_1\}\subseteq X_\beta$  and  $X_\beta\setminus\{\delta\}\subseteq X_\alpha$. (Because if $(\mu, \nu) \in \Gamma(\alpha)$ and $\mu \neq \delta_1$, then since $(\delta, \nu) \in \Gamma(\beta)$ we have $\nu \neq \delta_2$, so $\beta \in \Gamma(\alpha)$ implies $(\mu,\nu) \in \Gamma(\beta)$.) As we are assuming that $\delta_1\not\in X_\alpha$, we get $X_\beta= X_\alpha\cup\{\delta\}$ so $|X_\beta|=|X_\alpha|+1$. Since $b\geq a$, we obtain $|X_\beta|=b$, $|X_\alpha|=a$ and $b=a+1$.

Write $X_\alpha=\{x_1,\ldots,x_a\}$, $\gamma=(\delta,\nu)$ and $\gamma_i=(x_i,\nu)$, for $i\in \{1,\ldots,a\}$. From the previous paragraph, $\{\gamma_1,\ldots,\gamma_a\}$ is a maximal independent set of $\Gamma(\alpha)$ of size $a$ and $\{\gamma,\gamma_1,\ldots,\gamma_a\}$ is a maximal independent set of $\Gamma(\beta)$ of size $b$. So, from Lemma~\ref{ab-max-independent} applied to $\alpha$ and $\gamma_i$ (for each $i\in \{1,\ldots,a\}$), we see that there exists $Y_i\subseteq \Delta$ of size $b$ such that 
$V_i=\{(x_i,y) : y\in Y_i\}$ is a maximal independent set of $\Gamma(\alpha)$. If $\delta_2\not\in Y_i$, then by Lemmas~\ref{vx-nbrs-adjacent} and~\ref{same-row-nonadjacent} the set $V_i$ is contained in $\Gamma(\beta)$. Therefore $V_i$ and $\{\gamma,\gamma_1,\ldots,\gamma_a\}$ are both independent sets of $\Gamma(\beta)$ of size $b$ containing $\gamma_i$, which contradicts Lemma~\ref{ab-max-independent}. Thus $\delta_2\in Y_i$, for each $i\in \{1,\ldots,a\}$, so $(x_i, \delta_2) \in \Gamma(\alpha)$ for every $i \in \{1, \ldots, a\}$.

Now, $V_1$ and $\{\beta,(x_1,\delta_2),\ldots,(x_a,\delta_2)\}$ are independent sets of $\Gamma(\alpha)$ of size $b$ both containing $(x_1,\delta_2)$, again contradicting Lemma~\ref{ab-max-independent}. This final contradiction gives that $(\delta_1,\nu)\in\Gamma(\alpha)$.

The proof for the neighbours of $\beta$ of the form $(\nu,\delta)$ is entirely symmetric. 
\end{proof}

\begin{lemma}\label{a-at-least-2}
$\Gamma$ has diameter $2$.
\end{lemma}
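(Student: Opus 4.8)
The plan is to show that every vertex of $\Gamma$ lies at Hamming distance $0$, $1$, or $2$ from $\alpha$, and more precisely that every vertex is actually a neighbour of $\alpha$ or equal to $\alpha$ — so that $\Gamma$ has diameter $2$. Since $\Gamma$ is connected, it suffices to prove that the closed neighbourhood $\{\alpha\} \cup \Gamma(\alpha)$ is already ``closed'' in the sense that any neighbour of a neighbour of $\alpha$ is again within distance $2$ of $\alpha$; combined with connectedness this forces the diameter to be $2$ (we of course need to rule out diameter $1$, but diameter $1$ would make $\Gamma$ complete, contradicting that $\alpha$ has a non-neighbour, e.g.\ because $n=2$ and $T_\delta$ is non-trivial so the orbit structure leaves vertices unreached in one step). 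First I would fix $\beta \in \Gamma(\alpha)$ and examine $\Gamma(\beta)$ using the explicit description just obtained in Lemma~\ref{a-ge-2-nbrs}.

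The key computation is that Lemma~\ref{a-ge-2-nbrs} pins down exactly which vertices of $\Gamma(\beta)$ are \emph{not} already in $\Gamma(\alpha)$: apart from $\alpha$ itself, they are the vertices $(\delta,\nu)$ with $(\delta_1,\nu)\in\Gamma(\alpha)$ and the vertices $(\nu,\delta)$ with $(\nu,\delta_2)\in\Gamma(\alpha)$. The point is that each such ``new'' vertex shares a coordinate with $\alpha=(\delta,\delta)$, hence is at Hamming distance $1$ from $\alpha$. The next step is to leverage the structure from the rectangle/square decomposition of $\Gamma(\alpha)$ described before Lemma~\ref{abconstant}, together with Lemma~\ref{ab-max-independent} and Lemma~\ref{vx-nbrs-adjacent}, to argue that any vertex reachable from $\alpha$ in two steps is adjacent to $\alpha$. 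Concretely, I would take an arbitrary $\gamma' \in \Gamma(\beta)$ and show that $\gamma'$ is within distance $2$ of $\alpha$ by tracking Hamming distances: since $d_H(\alpha,\beta)=2$ and $\beta,\gamma'$ are adjacent, $d_H(\alpha,\gamma')\in\{0,1,2,3,4\}$ a priori, but the new vertices identified in Lemma~\ref{a-ge-2-nbrs} all have $d_H(\alpha,\cdot)=1$, and hence lie in $\Gamma(\alpha)$ once we verify adjacency (using that a vertex at Hamming distance $1$ from $\alpha$ is forced, via arc-transitivity and $d_H(\alpha,\beta)=2$, to behave correctly).

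I would then complete the argument by a short connectedness sweep: starting from $\alpha$, every vertex at distance $1$ is in $\Gamma(\alpha)$, and by the above every vertex at distance $2$ is \emph{also} in $\{\alpha\}\cup\Gamma(\alpha)$ — which means the ball of radius $2$ around $\alpha$ equals the ball of radius $1$. By vertex-transitivity this holds around every vertex, and connectedness then forces $V\Gamma = \{\alpha\}\cup\Gamma(\alpha)$, giving diameter at most $2$. Finally, diameter exactly $2$ (not $1$) follows since $\alpha$ has a genuine non-neighbour: for instance, a vertex differing from $\alpha$ in a coordinate by a $T_\delta$-movement produces, as in Lemma~\ref{n=2}, a pair in $A^2_+\Gamma$, so non-adjacent neighbours of a common vertex exist, preventing completeness.

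The main obstacle I anticipate is the second step: carefully showing that a vertex at Hamming distance $1$ from $\alpha$ that arises as a neighbour of a neighbour is genuinely adjacent to $\alpha$, rather than merely at distance $2$. This requires combining the maximal-independent-set dichotomy of Lemma~\ref{ab-max-independent} (the only maximal independent sets through a neighbour are the ``row'' set of size $a$ and the ``column'' set of size $b$) with Lemma~\ref{vx-nbrs-adjacent} to control adjacency among vertices sharing a coordinate, and it is here that the $b\neq a$ versus $b=a$ casework from Lemma~\ref{a-ge-2-nbrs} may reappear and need to be handled uniformly.
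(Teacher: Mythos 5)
There is a genuine gap, and in fact the central claim of your plan is false. You assert that every vertex at graph distance $2$ from $\alpha$ ends up ``in $\{\alpha\}\cup\Gamma(\alpha)$'', so that the ball of radius $2$ equals the ball of radius $1$ and $V\Gamma=\{\alpha\}\cup\Gamma(\alpha)$. This cannot happen: by Remark~\ref{GpreservesHammingDist} (restated as Lemma~\ref{same-row-nonadjacent}), a vertex at Hamming distance $1$ from $\alpha$ is \emph{never} adjacent to $\alpha$, and Lemma~\ref{a-ge-2-nbrs} shows precisely that such vertices do occur as neighbours of neighbours of $\alpha$. So these vertices are at graph distance exactly $2$, the ball of radius $2$ strictly contains the ball of radius $1$, and the conclusion you aim for contradicts the structure of the graph you are trying to identify (in the complement of $\H(m,2)$, the $2(m-1)$ vertices at Hamming distance $1$ from $\alpha$ are exactly the vertices at distance $2$). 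Your fallback formulation is no better, but for the opposite reason: ``any neighbour of a neighbour of $\alpha$ is within distance $2$ of $\alpha$'' is a tautology and implies nothing about the diameter.

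What actually has to be proved is that no vertex lies at graph distance $3$ from $\alpha$, i.e.\ that every neighbour of a vertex at Hamming distance $1$ from $\alpha$ is itself at distance at most $2$ from $\alpha$ --- and Lemma~\ref{a-ge-2-nbrs} does not address this, since it describes the neighbourhood of a vertex \emph{adjacent} to $\alpha$ (Hamming distance $2$), not of a distance-$2$ vertex. Your proposal never engages with this step; it is exactly where the paper's proof does its real work. The paper supposes $\gamma$ is at distance $3$, takes a neighbour $\alpha'$ of $\gamma$ at distance $2$ from $\alpha$ (so $d_H(\alpha,\alpha')=1$ by Lemma~\ref{n=2}), and counts the common neighbours $\sigma$ of $\alpha$ and $\alpha'$ using Lemma~\ref{a-ge-2-nbrs}: there are $b(a-1)$ or $a(b-1)$ of them, yet each such $\sigma$ is non-adjacent to $\gamma$ while sharing the neighbour $\alpha'$ with it, hence lies at Hamming distance $1$ from $\gamma$, and Lemmas~\ref{same-row-nonadjacent} and~\ref{ab-max-independent} cap the number of such vertices at $a+b-2$. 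The inequality forces $a=2$ (after possibly swapping $a$ and $b$), and a further coordinate analysis then produces an independent set of size $b+1$ inside $\Gamma(\alpha')$, contradicting Lemma~\ref{ab-max-independent}. Some counting argument of this kind (or another device to exclude distance-$3$ vertices) is indispensable; without it your outline has no proof content at the decisive point.
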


\begin{proof}
Towards a contradiction, suppose that $\gamma$ is a vertex at distance $3$ from $\alpha$.  Then $\gamma$ has a neighbour $\alpha'$ that is at distance 2 from $\alpha$.  By Lemma~\ref{n=2}, $d_H(\alpha',\alpha)=1$. 

Let $\sigma$ be an arbitrary mutual neighbour of $\alpha$ and $\alpha'$.  Then $\sigma$ can take the role of $\beta$ in Lemma~\ref{a-ge-2-nbrs}, which means that since $\alpha' \in \Gamma(\sigma)\setminus \Gamma(\alpha)$, it must be the case that the unique vertex that lies at Hamming distance 1 from both $\sigma$ and $\alpha'$ but at Hamming distance 2 from $\alpha$, is in $\Gamma(\alpha)$.
Thus, the mutual neighbours of $\alpha$ and $\alpha'$ can be found in the following manner:  first, choose any vertex $\tau \in \Gamma(\alpha)$ with $d_H(\tau, \alpha')=1$; then (by Lemma~\ref{a-ge-2-nbrs}), any vertex $\sigma \in \Gamma(\alpha)$ with $d_H(\sigma,\tau)=1$ but $d_H(\sigma,\alpha')=2$, will be a mutual neighbour of $\alpha$ and $\alpha'$.  

Since $\gamma$ is at distance $3$ from $\alpha$, and any such $\sigma$ is adjacent to $\alpha$ and to $\alpha'$, it must be the case that $\sigma$ is at distance $2$ from $\gamma$. Then by Lemma~\ref{n=2}, for any such $\sigma$, $d_H(\sigma,\gamma)=1$. 

We have either $a$ or $b$ choices for $\tau$ (vertices that are in $\Gamma(\alpha)$ at Hamming distance $1$ from $\alpha'$).  For each choice of $\tau$, we have either $b-1$ or $a-1$ choices for $\sigma$ (vertices that are in $\Gamma(\alpha)$, at Hamming distance $1$ from $\tau$ and at Hamming distance $2$ from $\alpha'$).  Thus, there are either $b(a-1)$ or $a(b-1)$ possible choices for $\sigma$.  However every such $\sigma$ is in $\Gamma(\alpha')$ together with $\gamma$. By Lemmas~\ref{same-row-nonadjacent} and~\ref{ab-max-independent}, there can be at most $a-1+b-1=a+b-2$ choices for $\sigma$.  The inequality $b(a-1)\le a+b-2$ can be solved only if $a=2$, while the inequality $a(b-1) \le a+b-2$ can be solved only if $b=2$.  So, replacing $a$ by $b$ if necessary, we may assume that $a=2$. In particular, $b\geq a$. Furthermore, without loss of generality, we can assume that there are $b$ choices for $\tau$, and for each of these there is a unique choice for $\sigma$. (We denote by $\sigma_\tau$ the choice of $\sigma$ determined by $\tau$.)  

Using $\wedge$-transitivity, we may assume that $\alpha'=(\delta,\delta')$ (recall that $\alpha=(\delta,\delta)$). Notice that the $b$ choices for $\tau$  all have the same value in their $2^{\mathrm{nd}}$ entry because they are at Hamming distance $2$ from $\alpha$ and at Hamming distance $1$ from $\alpha'$.  So they have $b$ distinct values in their $1^{\textrm{st}}$ entry. However, since $d_H(\sigma_\tau,\alpha')=2$ and $d_H(\sigma_\tau,\tau)=1$, we obtain that $\tau$ and $\sigma_\tau$ have the same $1^{\textrm{st}}$ entry. Moreover, for each $\sigma_\tau$, $d_H(\sigma_\tau,\gamma)=1$ only 
 if $\sigma_\tau$ has the same $2^{\textrm{nd}}$ entry as $\gamma$, for each $\tau$.  But in this case, these $b$ choices for $\sigma_\tau$, together with $\gamma$, form an independent set of cardinality $b+1$ in $\Gamma(\alpha')$, contradicting Lemma~\ref{ab-max-independent}.
\end{proof}

The proof of Theorem~\ref{main-thm} when $\Gamma$ is undirected is now easy.
\begin{corollary}\label{co1}
$\Gamma$ is isomorphic to the complement of $\H(m,2)$.
\end{corollary}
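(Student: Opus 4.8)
The plan is to show that, in the case at hand, the adjacency relation of $\Gamma$ coincides exactly with ``being at Hamming distance $2$'', so that $\Gamma$ is literally the complement of $\H(m,2)$ on the vertex set $\Delta^2$. One inclusion is already in hand: since $\alpha$ and $\beta$ are adjacent with $d_H(\alpha,\beta)=2$ (Lemma~\ref{n=2}), arc-transitivity together with Remark~\ref{GpreservesHammingDist} forces every pair of adjacent vertices to be at Hamming distance $2$. This is in fact recorded during the proof of Lemma~\ref{n=2}, where it is observed that adjacent vertices lie at Hamming distance $k=2$.

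For the reverse inclusion I would argue by contradiction. Suppose $u$ and $v$ are vertices with $d_H(u,v)=2$ that are \emph{not} adjacent. Because $\Gamma$ is connected of diameter $2$ (Lemma~\ref{a-at-least-2}) and $u\neq v$, the distance between $u$ and $v$ is exactly $2$, so they have a common neighbour $w$. In the undirected setting $\Gamma^{+}=\Gamma^{-}=\Gamma$, hence $w\in\Gamma^{+}(u)\cap\Gamma^{+}(v)$ and the non-adjacent pair $(u,v)$ lies in $A^2_+\Gamma$. But Lemma~\ref{n=2} asserts that every pair in $A^2_+\Gamma$ is at Hamming distance $1$, contradicting $d_H(u,v)=2$. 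Therefore any two vertices at Hamming distance $2$ are adjacent.

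Combining the two inclusions, the edges of $\Gamma$ are precisely the pairs of vertices at Hamming distance $2$ in $\Delta^2$. Since in $\H(m,2)$ adjacency means Hamming distance $1$ and, with $n=2$, the only other possibility for distinct vertices is Hamming distance $2$, this is exactly the edge set of the complement of $\H(m,2)$ described in Example~\ref{example-hamming}, giving $\Gamma\cong\overline{\H(m,2)}$. The substantive effort has already been spent in establishing that $\Gamma$ has diameter $2$; granting that, the only point requiring care is the elementary observation that, in the undirected case, the pairs at distance exactly $2$ are precisely the elements of $A^2_+\Gamma$, which is what allows Lemma~\ref{n=2} to be applied.
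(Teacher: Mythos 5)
Your proposal is correct and follows essentially the same route as the paper: both rest on Lemma~\ref{a-at-least-2} (diameter $2$), Remark~\ref{GpreservesHammingDist} with arc-transitivity (adjacent pairs are at Hamming distance $2$), and the first part of Lemma~\ref{n=2} (pairs in $A^2_+\Gamma$ are at Hamming distance $1$), so that adjacency coincides exactly with Hamming distance $2$. The only cosmetic difference is that you argue for an arbitrary pair $(u,v)$ while the paper phrases the same argument at the fixed vertex $\alpha$ and lets transitivity do the rest.
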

\begin{proof}
From Lemma~\ref{a-at-least-2}, we conclude that there are no vertices at distance $3$ from $\alpha$.  
Since $\Gamma$ is connected, every vertex is at distance 1 or 2 from $\alpha$. Since $G$ is $\wedge$-transitive and preserves Hamming distance, every vertex at Hamming distance $2$ from $\alpha$ is adjacent to $\alpha$ in $\Gamma$, and every vertex at Hamming distance $1$ from $\alpha$ is at distance $2$ from $\alpha$ in $\Gamma$.  But then $\Gamma$ is isomorphic to the complement of the Hamming graph $\H(m,2)$.
\end{proof}

\subsection{$\Gamma$ is directed}\label{dir} In this subsection we assume that $\Gamma$ is directed and we conclude the proof of Theorem~\ref{main-thm}. We let $k$ denote the number of out-neighbours of any vertex (so the total valency of a vertex is $2k$). Note that for any vertex $v$, we have $\Gamma^+(v) \cap \Gamma^-(v)=\emptyset$, since otherwise by arc-transitivity $\Gamma$ is undirected. Furthermore, we fix $\gamma=(\delta_1',\delta_2')\in \Gamma^+(\alpha)$.


\begin{lemma}\label{same-orbit-lengths}
For $\eta$ and $\nu$ in $\Delta$, we have $|\eta^{H_{\nu}}|=|\nu^{H_{\eta}}|$.
\end{lemma}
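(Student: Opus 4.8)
The plan is to prove this purely as a statement about the transitive permutation group $H$ acting on $\Delta$, using nothing more than the orbit-stabilizer theorem; the ambient cartesian-decomposition machinery plays no role here, so I would phrase everything in terms of $H$, $H_\nu$ and $H_\eta$.

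First I would apply the orbit-stabilizer theorem to the action of the point stabilizer $H_\nu$ on $\Delta$. The stabilizer in $H_\nu$ of the point $\eta$ is precisely $H_\nu\cap H_\eta$, the pointwise stabilizer of the pair $\{\nu,\eta\}$, so
\[
|\eta^{H_\nu}|=\frac{|H_\nu|}{|H_\nu\cap H_\eta|}.
\]
Symmetrically, $|\nu^{H_\eta}|=|H_\eta|/|H_\eta\cap H_\nu|$. These two identities are the whole substance of the argument, so I would record them at the outset.

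Second, I would invoke that $H$ is transitive on $\Delta$ (recall that the normal subgroup $T\leq H$ is already transitive). By the orbit-stabilizer theorem applied to $H$ itself, every point stabilizer has order $|H|/|\Delta|$; in particular $|H_\nu|=|H_\eta|$. Since the subgroup $H_\nu\cap H_\eta$ occurring in both denominators is visibly symmetric in $\nu$ and $\eta$, the two displayed fractions coincide, which is exactly the claimed equality $|\eta^{H_\nu}|=|\nu^{H_\eta}|$.

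There is essentially no obstacle here: the identity is the standard fact that an orbital and its paired orbital have equal subdegrees. The only step requiring a moment's care is the appeal to transitivity to equate $|H_\nu|$ and $|H_\eta|$ — without it the stabilizers could have different orders and the argument would collapse. A slightly more conceptual alternative, which I would mention but not prefer, is to let $\mathcal{O}$ be the $H$-orbital on $\Delta\times\Delta$ containing $(\nu,\eta)$ and double-count: transitivity forces every out-valency in $\mathcal{O}$ to equal $|\eta^{H_\nu}|$ and hence every in-valency to equal $|\mathcal{O}|/|\Delta|=|\eta^{H_\nu}|$, and recognising the in-valency at $\eta$ as the out-valency $|\nu^{H_\eta}|$ of the paired orbital $\mathcal{O}^{\ast}$ then finishes the proof. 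For brevity I would write up the orbit-stabilizer version.
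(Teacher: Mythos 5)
Your proof is correct. It does, however, take a different route from the paper: the paper proves the lemma by double counting the single orbital $(\eta,\nu)^H$ in $\Delta\times\Delta$, namely $|\Delta|\,|\nu^{H_\eta}|=|(\eta,\nu)^H|=|\Delta|\,|\eta^{H_\nu}|$ (counting pairs in the orbital by their first coordinate and then by their second, each coordinate value occurring equally often by transitivity), citing Wielandt's Theorem 16.3, and then cancels $|\Delta|$. Your primary argument instead computes each subdegree via orbit--stabilizer, $|\eta^{H_\nu}|=|H_\nu|/|H_\nu\cap H_\eta|$ and $|\nu^{H_\eta}|=|H_\eta|/|H_\nu\cap H_\eta|$, and finishes with the observation that transitivity forces $|H_\nu|=|H_\eta|=|H|/|\Delta|$; the common intersection $H_\nu\cap H_\eta$ then makes the equality manifest. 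Both arguments are elementary and both use finiteness essentially (the statement fails for infinite groups); yours makes the role of transitivity explicit through equality of stabilizer orders, while the paper's version never computes stabilizer orders at all and counts only within $\Delta\times\Delta$. Note also that the ``more conceptual alternative'' you sketch at the end — double counting in-valencies and out-valencies of the orbital — is in substance exactly the paper's proof, so you have in fact reconstructed both arguments.
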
 

\begin{proof}
A double counting gives $|\Delta||\nu^{H_\eta}|=|(\eta,\nu)^H|=|\Delta||\eta^{H_\nu}|$ (see~\cite[Theorem~$16.3$]{Wie}). Since $\Delta$ is finite and nonempty, this concludes our proof.
\end{proof}

We will need the following fact in a few places.

\begin{lemma}\label{only-2-cases}
We have $\delta_1^{H_{\delta}}=\delta_2^{H_{\delta}}$ if and only if $\delta_1'^{H_{\delta}}=\delta_2'^{H_{\delta}}.$
\end{lemma}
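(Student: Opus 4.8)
The plan is to exploit arc-transitivity to produce a single group element that simultaneously relates the in-neighbour $\beta$ and the out-neighbour $\gamma$ of $\alpha$, and then to translate the two orbit-equalities in the statement into a pair of double-coset equalities that are visibly equivalent. Since $\beta\in\Gamma^-(\alpha)$ and $\gamma\in\Gamma^+(\alpha)$, both $(\beta,\alpha)$ and $(\alpha,\gamma)$ are arcs of $\Gamma$, so arc-transitivity of $G$ furnishes $g\in G\le W$ with $(\beta,\alpha)^g=(\alpha,\gamma)$; that is, $\beta^g=\alpha$ and $\alpha^g=\gamma$.

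Next I would write $g=\sigma(h_1,h_2)$ with $\sigma\in\Sym(2)$ and $h_1,h_2\in H$, and read off the coordinates. Because $\alpha=(\delta,\delta)$ has equal coordinates, $\alpha^g=(\delta^{h_1},\delta^{h_2})$ regardless of $\sigma$, so $\delta_1'=\delta^{h_1}$ and $\delta_2'=\delta^{h_2}$. From $\beta^g=\alpha$ one obtains, as an \emph{unordered} set equality, $\{\delta_1,\delta_2\}=\{\delta^{h_1^{-1}},\delta^{h_2^{-1}}\}$ (the precise matching of indices depends on whether $\sigma$ is trivial or the transposition, but this does not affect which $H_\delta$-orbit each point lies in).

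The core observation is that two points $\delta^a,\delta^b$ (for $a,b\in H$) lie in the same $H_\delta$-orbit if and only if $H_\delta a H_\delta = H_\delta b H_\delta$. Applying this, the condition $\delta_1^{H_\delta}=\delta_2^{H_\delta}$ is equivalent to $H_\delta h_1^{-1}H_\delta = H_\delta h_2^{-1}H_\delta$, whereas $(\delta_1')^{H_\delta}=(\delta_2')^{H_\delta}$ is equivalent to $H_\delta h_1 H_\delta = H_\delta h_2 H_\delta$. Since inversion is an involution on the set of $(H_\delta,H_\delta)$-double cosets sending $H_\delta h H_\delta$ to $H_\delta h^{-1}H_\delta$, the first equality holds if and only if the second does, which is exactly the asserted equivalence.

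The proof is genuinely short, so there is no single hard step; the only points requiring care are bookkeeping. First, I must track the coordinate relations correctly in the transposition case $\sigma=(1\ 2)$, where $\beta^g=(\delta_2^{h_1},\delta_1^{h_2})$ rather than $(\delta_1^{h_1},\delta_2^{h_2})$; both cases nevertheless yield the same unordered set $\{\delta^{h_1^{-1}},\delta^{h_2^{-1}}\}$, so the conclusion is insensitive to this choice. Second, if one prefers to avoid double-coset language, the same equivalence can be obtained at the level of elements: a relation $h_1^{-1}k h_2\in H_\delta$ with $k\in H_\delta$ (expressing that $\delta_1$ and $\delta_2$ share an $H_\delta$-orbit) yields, on taking inverses in $H$ and using that elements of $H_\delta$ fix $\delta$, that $\delta^{h_1}$ and $\delta^{h_2}$ share an $H_\delta$-orbit, and the converse is identical. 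Either way, the heart of the matter is the inversion symmetry of $(H_\delta,H_\delta)$-double cosets.
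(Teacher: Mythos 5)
Your proof is correct and takes essentially the same approach as the paper: both invoke arc-transitivity to obtain $g=\sigma(h_1,h_2)\in G$ with $(\beta,\alpha)^g=(\alpha,\gamma)$ and then read off the coordinate relations $\delta_i'=\delta^{h_i}$ and $\{\delta_1,\delta_2\}=\{\delta^{h_1^{-1}},\delta^{h_2^{-1}}\}$. The paper's explicitly constructed element $h_2^{-1}h^{-1}h_1\in H_\delta$ (or $h_1^{-1}h^{-1}h_2$ when $\sigma=(1\,2)$) is precisely the element-level form of your double-coset inversion, so your formulation simply packages the paper's case analysis on $\sigma$ and its ``the converse is analogous'' step into one symmetric statement.
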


\begin{proof}
If $\delta_1^{H_{\delta}}=\delta_2^{H_{\delta}}$, then there is some $h \in H_{\delta}$ with $\delta_1^h=\delta_2$.  Arc-transitivity means that there is some $g\in G$ with $(\beta,\alpha)^g=(\alpha,\gamma)$. Now, $g=\sigma(h_1, h_2)$ with $h_1,h_2\in H$ and $\sigma=1$ or $\sigma=(1\ 2)$. As $\alpha^g=\gamma$, we must have $\delta^{h_1}=\delta_1'$ and $\delta^{h_2}=\delta_2'$. 

If $\sigma=1$, from $\beta^g=\alpha$ we  have $\delta_1^{h_1}=\delta$ and $\delta_2^{h_2}=\delta$.  It is now clear that $h_2^{-1}h^{-1}h_1 \in H_{\delta}$, and $(\delta_2')^{h_2^{-1}h^{-1}h_1}=\delta_1'$, completing the proof in this case.
If $\sigma=(1\ 2)$, from $\beta^g=\alpha$ we  have $\delta_2^{h_1}=\delta$ and $\delta_1^{h_2}=\delta$. 
It is clear that $h_1^{-1}h^{-1}h_2 \in H_{\delta}$, and $(\delta_1')^{h_1^{-1}h^{-1}h_2}=\delta_2'$, completing the proof.  

The converse is analogous.
\end{proof}

The next result nicely limits the cases that we need to consider.

\begin{lemma}\label{square}
If $(\varepsilon_1, \delta),  (\delta, \varepsilon_2) \in \Gamma^-(\gamma)$ with $\varepsilon_1, \varepsilon_2 \neq \delta$, then $\varepsilon_1^{H_{\delta}}=\varepsilon_2^{H_{\delta}}$. Moreover, $\delta_1'^{H_{\delta}}=\delta_2'^{H_{\delta}}$, and either $\varepsilon_1^{H_\delta}=\delta_1'^{H_\delta}$, or $\varepsilon_1^{H_\delta}=\delta_1^{H_\delta}$. 
\end{lemma}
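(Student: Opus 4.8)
The plan is to handle the three assertions in turn, exploiting throughout that the vertex $\alpha=(\delta,\delta)$ has equal coordinates, so that \emph{any} element of $G$ fixing $\alpha$ has both of its wreath components in $H_\delta$.

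First I would prove $\varepsilon_1^{H_\delta}=\varepsilon_2^{H_\delta}$. Put $A=(\varepsilon_1,\delta)$ and $B=(\delta,\varepsilon_2)$. Since $\alpha,A,B\in\Gamma^-(\gamma)$ and $d_H(\alpha,A)=d_H(\alpha,B)=1$, Lemma~\ref{same-row-nonadjacent} shows $\alpha$ is non-adjacent to each of $A,B$, while both share the out-neighbour $\gamma$; hence $(\alpha,A),(\alpha,B)\in A^2_+\Gamma$. By $\wedge$-transitivity there is $g\in G$ with $(\alpha,A)^g=(\alpha,B)$, so $g\in G_\alpha$ and $A^g=B$. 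Writing $g=\sigma(h_1,h_2)$ and using that $\alpha^g=\alpha$ with $\alpha$ symmetric forces $h_1,h_2\in H_\delta$. If $\sigma=1$ then the second coordinate of $A^g$ is $\delta^{h_2}=\delta\neq\varepsilon_2$, a contradiction; hence $\sigma=(1\ 2)$ and $A^g=(\delta^{h_1},\varepsilon_1^{h_2})=(\delta,\varepsilon_2)$, so $\varepsilon_1^{h_2}=\varepsilon_2$ with $h_2\in H_\delta$, which is exactly $\varepsilon_1^{H_\delta}=\varepsilon_2^{H_\delta}$.

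For $\delta_1'^{H_\delta}=\delta_2'^{H_\delta}$ my key reduction is that it suffices to exhibit a coordinate-swap in $G_\alpha\cap G_\gamma$: such an element $(1\ 2)(u_1,u_2)$ has $u_1,u_2\in H_\delta$ (it fixes $\alpha$) and satisfies $\delta_2'^{u_1}=\delta_1'$ (it fixes $\gamma$), whence the orbit identity follows. To produce such a swap I would use that $G_\gamma$ is transitive on $\Gamma^-(\gamma)$ (by arc-transitivity) and pick $g_\star=\sigma(h_1,h_2)\in G_\gamma$ with $A^{g_\star}=B$. If $\sigma=(1\ 2)$, then $A^{g_\star}=(\delta^{h_1},\varepsilon_1^{h_2})=(\delta,\varepsilon_2)$ forces $h_1\in H_\delta$, and $g_\star\in G_\gamma$ gives $\delta_2'^{h_1}=\delta_1'$, so $\delta_1'^{H_\delta}=\delta_2'^{H_\delta}$ immediately. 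The residual possibility is that every such $g_\star$ has $\sigma=1$; then $C:=\alpha^{g_\star}\in\Gamma^-(\gamma)$ is a new in-neighbour of $\gamma$, and its first coordinate cannot be $\delta$ (otherwise $\varepsilon_1=\delta$), so $d_H(\alpha,C)=2$ and Lemma~\ref{vx-nbrs-adjacent} makes $\alpha$ and $C$ adjacent. I would then play this extra in-neighbour off against the swap $g$ of the first paragraph (which interchanges the two coordinate-roles of the cross $A,B$) to manufacture a swap fixing both $\alpha$ and $\gamma$, or else to reach a contradiction, with Lemma~\ref{same-orbit-lengths} supplying the orbit-length bookkeeping. \emph{This no-swap alternative is the main obstacle of the whole lemma}; it is the only place where the argument is not a routine unwinding of the wreath action.

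Finally, part 3 follows formally from part 2. Since $A,B\in\Gamma^-(\gamma)$ with $d_H(A,B)=2$, Lemma~\ref{vx-nbrs-adjacent} makes them adjacent; say $(A,B)\in A\Gamma$. Carrying $(A,B)$ by an element of $G$ onto the standard out-arc $(\alpha,\gamma)$ (so $\alpha\mapsto A$, $\gamma\mapsto B$) and noting that the wreath component acting on the coordinate where $A$ carries the entry $\delta$ must lie in $H_\delta$, one reads off $\varepsilon_1^{H_\delta}=\varepsilon_2^{H_\delta}\in\{\delta_1'^{H_\delta},\delta_2'^{H_\delta}\}$, which by part 2 equals $\delta_1'^{H_\delta}$. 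Carrying $(A,B)$ instead onto the standard in-arc $(\beta,\alpha)$ yields $\varepsilon_1^{H_\delta}\in\{\delta_1^{H_\delta},\delta_2^{H_\delta}\}$, which by Lemma~\ref{only-2-cases} together with part 2 equals $\delta_1^{H_\delta}$. Either computation delivers the claimed dichotomy.
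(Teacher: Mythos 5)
Your first paragraph reproduces the paper's own argument for $\varepsilon_1^{H_\delta}=\varepsilon_2^{H_\delta}$, and your third paragraph is a correct derivation of the final dichotomy once $\delta_1'^{H_\delta}=\delta_2'^{H_\delta}$ is in hand. The gap is exactly where you flag it: the middle claim. Your plan reduces it to finding a coordinate-swapping element of $G_\gamma$ (ideally one fixing $\alpha$ as well), and you have no argument in the branch where every $g_\star\in G_\gamma$ carrying $(\varepsilon_1,\delta)$ to $(\delta,\varepsilon_2)$ has trivial top component. Worse, the object you are hunting for may simply not exist: the orbit equality $\delta_1'^{H_\delta}=\delta_2'^{H_\delta}$ produces elements of $H_\delta$, hence of $W_\alpha\cap W_\gamma$, swapping the coordinates, but nothing forces such an element to lie in the subgroup $G$. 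So ``manufacture a swap fixing both $\alpha$ and $\gamma$'' is a strictly stronger goal than the lemma itself, and no contradiction is available from its failure.

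The paper closes this case without ever seeking a swap, and the key ingredient you are missing is the normal base group: $N_\gamma=T_{\delta_1'}\times T_{\delta_2'}\leq G_\gamma$, so $\Gamma^-(\gamma)$ contains the full rectangle $\alpha^{N_\gamma}=\delta^{T_{\delta_1'}}\times\delta^{T_{\delta_2'}}$. Choosing $\varepsilon_i\in\delta^{T_{\delta_i'}}\setminus\{\delta\}$ (nonempty since $T_{\delta_i'}$ does not fix $\delta$), the vertex $\alpha'=(\varepsilon_1,\varepsilon_2)$ lies in $\Gamma^-(\gamma)$, is at Hamming distance $2$ from $\alpha$, hence is adjacent to $\alpha$ by Lemma~\ref{vx-nbrs-adjacent}, and --- crucially, by your part~1 --- has both coordinates in one $H_\delta$-orbit. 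Arc-transitivity then does the rest: if $\alpha'\in\Gamma^+(\alpha)$, then $\Gamma^+(\alpha)=\alpha'^{G_\alpha}\subseteq\alpha'^{W_\alpha}=\varepsilon_1^{H_\delta}\times\varepsilon_1^{H_\delta}$, and $\gamma\in\Gamma^+(\alpha)$ gives $\delta_1',\delta_2'\in\varepsilon_1^{H_\delta}$; if instead $\alpha'\in\Gamma^-(\alpha)$, the same containment for $\Gamma^-(\alpha)$ gives $\delta_1^{H_\delta}=\delta_2^{H_\delta}=\varepsilon_1^{H_\delta}$, and Lemma~\ref{only-2-cases} transfers this to the primed orbits. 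Note that this argument yields the third assertion at the same time ($\varepsilon_1^{H_\delta}$ equals $\delta_1'^{H_\delta}$ in the first case and $\delta_1^{H_\delta}$ in the second), so once the gap is filled this way your third paragraph becomes redundant. Finally, your vertex $C=(\delta^{h_1},\varepsilon_2)$ cannot play the role of $\alpha'$: you have no control that its first coordinate lies in $\varepsilon_2^{H_\delta}$, and without that the containment $\Gamma^{\pm}(\alpha)\subseteq\varepsilon_1^{H_\delta}\times\varepsilon_1^{H_\delta}$ --- the actual engine of the proof --- is unavailable.
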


\begin{proof}
We start by proving that if $\alpha_1=(\varepsilon_1,\delta)$, $\alpha_2=(\delta,\varepsilon_2)$ and $\alpha_1,\alpha_2\in \Gamma^-(\gamma)$, then $\varepsilon_1^{H_\delta}=\varepsilon_2^{H_\delta}$. As $d_H(\alpha_1,\alpha)=d_H(\alpha_2,\alpha)=1$, we see from Lemma~\ref{same-row-nonadjacent} that $\alpha_1$ and $\alpha_2$ are not adjacent to $\alpha$, that is, $(\alpha,\alpha_1),(\alpha,\alpha_2)\in A^2_+\Gamma$. So, by $\wedge$-transitivity, there must be some element $g \in G_{\alpha}$ with $\alpha_1^g=\alpha_2$.  It is not hard to see that $g=(1\ 2)(h_1, h_2)$, $h_1,h_2\in H_\delta$ and $\varepsilon_1^{h_2}=\varepsilon_2$.  Hence $\varepsilon_1$ and $\varepsilon_2$ are in the same $H_{\delta}$-orbit, that is, $\varepsilon_1^{H_\delta}=\varepsilon_2^{H_\delta}$.

Certainly, \[(\dag)\qquad\Gamma^-(\gamma) \supseteq \alpha^{N_\gamma}=\delta^{T_{\delta_1'}}\times \delta^{T_{\delta_2'}}.\]  Let $\varepsilon_1 \in \delta^{T_{\delta_1'}}\setminus\{\delta\}$ and $\varepsilon_2 \in \delta^{T_{\delta_2'}}\setminus\{\delta\}$. (Note that $\varepsilon_1$ and $\varepsilon_2$ are well-defined because $T_{\delta_i'}$ does not fix $\delta$.) As $(\varepsilon_1,\delta),(\delta,\varepsilon_2)\in \Gamma^-(\gamma)$, from the previous paragraph, we have  $\varepsilon_1^{H_\delta}=\varepsilon_2^{H_\delta}$. We also have from~$(\dag)$ that $\alpha'=(\varepsilon_1, \varepsilon_2) \in \Gamma^-(\gamma)$ has $d_H(\alpha',\alpha)=2$. So by Lemma~\ref{vx-nbrs-adjacent}, $\alpha$ and $\alpha'$ are adjacent.  
If $\alpha' \in \Gamma^+(\alpha)$, then by arc-transitivity, $\Gamma^+(\alpha) =\alpha'^{G_\alpha}\subseteq \alpha'^{W_\alpha}=\varepsilon_1^{H_{\delta}}\times\varepsilon_1^{H_{\delta}}$ (since $\varepsilon_1$ and $\varepsilon_2$ are in the same $H_{\delta}$-orbit). As $\gamma\in\Gamma^{+}(\alpha)$ and $\gamma=(\delta_1',\delta_2')$, we obtain $\delta_1', \delta_2' \in \varepsilon_1^{H_{\delta}}$ and $\delta_1'^{H_\delta}=\delta_2'^{H_\delta}$.
On the other hand, if $\alpha' \in \Gamma^-(\alpha)$, then an analogous argument yields $\Gamma^-(\alpha) \subseteq \varepsilon_1^{H_{\delta}}\times \varepsilon_1^{H_\delta}$, so $\delta_1^{H_\delta}=\delta_2^{H_\delta}=\varepsilon_1^{H_\delta}$, and Lemma~\ref{only-2-cases} completes the proof.
\end{proof}

Now we can obtain an extension of Lemma~\ref{abconstant}.

\begin{lemma}\label{abconstant-directed}
Let $\gamma'=(\delta_1',\delta_1')$.
For each $\varepsilon_1\in \delta_1'^{H_\delta}$ and $\varepsilon_2 \in \delta^{H_{\delta_1'}}$, the cardinalities of the following sets are equal and do not depend on $\varepsilon_1$ or $\varepsilon_2$:
\begin{enumerate}\renewcommand{\theenumi}{\alph{enumi}}
\item $\{ \nu \in \Delta : (\varepsilon_1, \nu) \in \Gamma^+(\alpha)\}$;
\item $\{ \nu \in \Delta : (\nu, \varepsilon_1) \in \Gamma^+(\alpha)\}$;
\item $\{ \nu \in \Delta : (\varepsilon_2, \nu) \in \Gamma^-(\gamma')\}$;
\item $\{ \nu \in \Delta : (\nu, \varepsilon_2) \in \Gamma^-(\gamma')\}$.
\end{enumerate}
Furthermore, $k=a|\delta_1'^{H_{\delta}}|$, where $a$ is the cardinality of each of these sets, and $a \ge 2$.
\end{lemma}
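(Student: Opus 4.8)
The plan is to read parts (a) and (b) directly off Lemma~\ref{abconstant}, to obtain parts (c) and (d) by applying the same lemma at the diagonal vertex $\gamma'$ in place of $\alpha$, and then to glue the two halves together by comparing in- and out-valencies via Lemma~\ref{same-orbit-lengths}. Throughout I work in the square case ${\delta_1'}^{H_\delta}={\delta_2'}^{H_\delta}$, which holds by Lemma~\ref{square}.

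First I would dispose of (a), (b), the formula for $k$, and the bound $a\ge 2$. Applying Lemma~\ref{abconstant} with $\Gamma^*=\Gamma^+$ and $\delta'=\delta_1'$ shows at once that the sets in (a) and (b) have a common cardinality $a$, independent of $\varepsilon_1\in{\delta_1'}^{H_\delta}$. Since $\gamma=(\delta_1',\delta_2')\in\Gamma^+(\alpha)$ with ${\delta_1'}^{H_\delta}={\delta_2'}^{H_\delta}$, the ``furthermore'' clause of Lemma~\ref{abconstant} gives $k=|\Gamma^+(\alpha)|=a\,|{\delta_1'}^{H_\delta}|$. For $a\ge 2$ I would argue that $T_\delta$ fixes neither $\delta_1'$ nor $\delta_2'$: because $\delta_1',\delta_2'$ are $H_\delta$-conjugate and $T_\delta\unlhd H_\delta$, the orbits ${\delta_1'}^{T_\delta}$ and ${\delta_2'}^{T_\delta}$ have equal size, so if $T_\delta$ fixed one it would fix both, whence $N_\alpha=T_\delta\times T_\delta$ would fix $\gamma$ and, being normal in $G_\alpha$, would fix $\Gamma^+(\alpha)=\gamma^{G_\alpha}$ pointwise; the connectedness argument from the opening of Lemma~\ref{n=2} would then force $N$ to be regular, a contradiction. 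Hence $T_\delta$ moves $\delta_2'$, and the distinct vertices $(\delta_1',{\delta_2'}^{t})$ with $t\in T_\delta$ show $a\ge|{\delta_2'}^{T_\delta}|\ge 2$.

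For (c) and (d) the point is that $\gamma'=(\delta_1',\delta_1')$ is again a diagonal vertex, so the entire machinery behind Lemma~\ref{abconstant} is available verbatim at $\gamma'$. Indeed, since $N$ is vertex-transitive and acts trivially on $\{T_1,T_2\}$ while $G$ is transitive on $\{T_1,T_2\}$, the stabiliser $G_{\gamma'}$ is transitive on $\{T_1,T_2\}$ exactly as in Remark~\ref{action}; moreover Lemma~\ref{Psfavourite} was proved for an arbitrary pair, so it applies with $\varepsilon_1=\varepsilon_2=\delta_1'$. Rerunning the proof of Lemma~\ref{abconstant} with $\delta$ replaced by $\delta_1'$ and $\Gamma^*=\Gamma^-$ then shows that the sets in (c) and (d) have a common cardinality $a'$, independent of $\varepsilon_2$ in its $H_{\delta_1'}$-orbit.

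The main obstacle is to match the two halves, that is, to prove $a=a'$ and that the orbit relevant to (c), (d) is precisely $\delta^{H_{\delta_1'}}$. My approach is to show that $\Gamma^-(\gamma')$ is a single ``square'' inside $\delta^{H_{\delta_1'}}\times\delta^{H_{\delta_1'}}$. As $\gamma'$ is $G$-conjugate to $\alpha$ it too lies in the square case, so $\Gamma^-(\gamma')$ is concentrated in one orbit $\rho^{H_{\delta_1'}}\times\rho^{H_{\delta_1'}}$; the delicate step is the identification $\rho^{H_{\delta_1'}}=\delta^{H_{\delta_1'}}$, equivalently $\alpha\in\Gamma^-(\gamma')$, after which $\alpha^{N_{\gamma'}}=\delta^{T_{\delta_1'}}\times\delta^{T_{\delta_1'}}\subseteq\Gamma^-(\gamma')$ pins the orbit down. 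I expect this to be the hardest point, since $\wedge$-transitivity controls shared \emph{out}-neighbours but not shared \emph{in}-neighbours, so it must be extracted from arc-transitivity applied to the arc $(\alpha,\gamma)$ together with the square structure recorded in Lemma~\ref{square}. Once it is in hand, the in-valency of $\gamma'$ equals the out-valency $k$ by arc-transitivity, so the ``furthermore'' clause of Lemma~\ref{abconstant} at $\gamma'$ gives $k=a'\,|\delta^{H_{\delta_1'}}|$. Comparing with $k=a\,|{\delta_1'}^{H_\delta}|$ and invoking Lemma~\ref{same-orbit-lengths} (with $\eta=\delta_1'$ and $\nu=\delta$), which yields $|{\delta_1'}^{H_\delta}|=|\delta^{H_{\delta_1'}}|$, forces $a=a'$ and completes the proof.
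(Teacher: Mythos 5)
Your treatment of (a), (b), the formula $k=a|{\delta_1'}^{H_\delta}|$, and the derivation of (c), (d) by running Lemma~\ref{abconstant} at the diagonal vertex $\gamma'$ all coincide with the paper's proof, and your argument for $a\ge 2$ (showing $T_\delta$ moves $\delta_2'$ via normality of $T_\delta$ in $H_\delta$, then invoking the connectedness argument from Lemma~\ref{n=2}) is a valid alternative to the paper's one-line appeal to the fact that $T_{\delta_1'}$ does not fix $\delta$. However, the step you yourself flag as the hardest point is a genuine gap: you never prove that $\Gamma^-(\gamma')$ meets $\delta^{H_{\delta_1'}}\times\delta^{H_{\delta_1'}}$; you only announce that it ``must be extracted from arc-transitivity applied to the arc $(\alpha,\gamma)$.'' Worse, the intermediate claim you set up, $\alpha\in\Gamma^-(\gamma')$, is stronger than what is needed and is not available at this stage: it is equivalent to $\gamma'=(\delta_1',\delta_1')\in\Gamma^+(\alpha)$, and nothing proved so far forces $\Gamma^+(\alpha)=\gamma^{G_\alpha}$, which is a union of $N_\alpha$-orbits (rectangles of the form ${\mu}^{T_\delta}\times{\nu}^{T_\delta}$), to contain any vertex of the form $(\rho,\rho)$; the entries $\delta_1',\delta_2'$ lie in the same $H_\delta$-orbit but need not lie in the same $T_\delta$-orbit. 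Your parenthetical ``equivalently'' is also false in one direction: knowing $\rho^{H_{\delta_1'}}=\delta^{H_{\delta_1'}}$ does not place the particular vertex $(\delta,\delta)$ inside $\Gamma^-(\gamma')$, which is in general a proper subset of the square.

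The paper closes exactly this gap with a weaker but sufficient claim: it produces a $G$-translate of $\alpha$, not $\alpha$ itself, inside the square. Choose $t\in H_\delta$ with $(\delta_2')^t=\delta_1'$ (possible since $(\delta_2')^{H_\delta}=(\delta_1')^{H_\delta}$ by Lemma~\ref{square}), and apply Lemma~\ref{Psfavourite} at the vertex $\alpha''=(\delta_1',\delta)$ to obtain $g=(t',t)\in G_{\alpha''}$; necessarily $t'\in H_{\delta_1'}$, and $g$ carries the arc $(\alpha,\gamma)$ to the arc $\left((\delta^{t'},\delta),\gamma'\right)$. Thus $(\delta^{t'},\delta)\in\Gamma^-(\gamma')$ has both entries in $\delta^{H_{\delta_1'}}$, so the ``furthermore'' clause of Lemma~\ref{abconstant} at $\gamma'$ gives $k=a'|\delta^{H_{\delta_1'}}|$, and Lemma~\ref{same-orbit-lengths} forces $a=a'$ exactly as in your last step. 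Your endgame is identical to the paper's; what is missing is precisely this construction, and without it (or something like it) the two halves of your proof do not connect.
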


\begin{proof}
The equality of the cardinalities of the sets in~(a) and~(b) follows immediately from Lemma~\ref{abconstant}.  Replacing $\alpha$ by $\gamma'$ in Lemma~\ref{abconstant} shows the equality of the cardinalities of the sets in~(c) and~(d).  Call the first of these cardinalities $a$, and the second $a'$.  By Lemma~\ref{square} we have $\delta_1'^{H_\delta}=\delta_2'^{H_{\delta}}$, so since $\gamma=(\delta_1',\delta_2') \in \Gamma^+(\alpha)$, Lemma~\ref{abconstant} tells us that $k=a|\delta_1'^{H_\delta}|$.  

We will now find an in-neighbour of $\gamma'$ whose second entry is $\delta$, and whose first entry is in $\delta^{H_{\delta_1'}}$.  Then Lemma~\ref{abconstant} applied to this in-neighbour shows that $k=a'|\delta^{H_{\delta_1'}}|$.  Since $|\delta_1'^{H_\delta}|=|\delta^{H_{\delta_1'}}|$ by Lemma~\ref{same-orbit-lengths}, this will show that $a=a'$.

Let $t \in H_{\delta}$ such that $(\delta_2')^t=\delta_1'$ (we can do this since $(\delta_2')^{H_\delta}=(\delta_1')^{H_\delta}$ by Lemma~\ref{square}). Let $\alpha''=(\delta_1',\delta)$. By Lemma~\ref{Psfavourite}, there must be some $g=(t',t) \in G_{\alpha''}$.  Applying $g$ to $\alpha$ and $\gamma$, we get $\gamma^g=(\delta_1',\delta_1')$ and $\alpha^g=(\delta^{t'},\delta)$.  This shows that $\alpha^g$ is an in-neighbour of $\gamma'$ with the desired form.

That $a \ge 2$ follows from the fact that $T_{\delta_1'}$ does not fix $\delta$.
\end{proof}

For the rest of this subsection, we let $a$ denote the constant defined in Lemma~\ref{abconstant-directed}.

\begin{lemma}\label{k-a-b+1}
The sets $\Gamma^+(\beta')\cap\Gamma^-(\alpha)$ and  $\Gamma^-(\beta')\cap\Gamma^-(\alpha)$ each have cardinality $(k-2a+1)/2$ for any $\beta' \in \Gamma^-(\alpha)$.
\end{lemma}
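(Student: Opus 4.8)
The plan is to reduce to the case $\beta'=\beta$ and then count the vertices of $\Gamma^-(\alpha)$ according to their Hamming distance from $\beta$. Since $G_\alpha$ is transitive on $\Gamma^-(\alpha)=\beta^{G_\alpha}$, it suffices to prove the claim for $\beta'=\beta$. First I would record the shape of $\Gamma^-(\alpha)$: by Lemma~\ref{square} and Lemma~\ref{only-2-cases} we have $\delta_1^{H_\delta}=\delta_2^{H_\delta}=:O$, and $\delta\notin O$, so arc-transitivity gives $\Gamma^-(\alpha)=\beta^{G_\alpha}\subseteq O\times O$ and every element of $\Gamma^-(\alpha)$ is at Hamming distance $2$ from $\alpha$. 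A short orbit-length computation (comparing the arc $(\beta,\alpha)$ with the arc $(\alpha,\gamma)$ as in the proof of Lemma~\ref{only-2-cases}, together with Lemma~\ref{same-orbit-lengths}) shows $|O|=|\delta_1'^{H_\delta}|$; combined with Lemma~\ref{abconstant} applied to $\Gamma^-(\alpha)$ this yields that the set of elements of $\Gamma^-(\alpha)$ with first coordinate $\delta_1$ and the set with second coordinate $\delta_2$ each have exactly $a$ elements, and $k=a|O|$.

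Next I would partition $\Gamma^-(\alpha)\setminus\{\beta\}$. A vertex $w=(\eta_1,\eta_2)\in\Gamma^-(\alpha)$ has $d_H(w,\beta)\in\{1,2\}$. If $d_H(w,\beta)=1$ then $w$ shares exactly one coordinate with $\beta$; the ``row'' through $\beta$ (first coordinate $\delta_1$) contributes $a-1$ such $w$, and the ``column'' through $\beta$ (second coordinate $\delta_2$) contributes a further disjoint $a-1$, so there are exactly $2a-2$ vertices of $\Gamma^-(\alpha)$ at Hamming distance $1$ from $\beta$; by Lemma~\ref{same-row-nonadjacent} these are precisely the vertices of $\Gamma^-(\alpha)$ non-adjacent to $\beta$. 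The remaining vertices have $d_H(w,\beta)=2$ and hence, by Lemma~\ref{vx-nbrs-adjacent}, are adjacent to $\beta$, so they split into $\Gamma^+(\beta)\cap\Gamma^-(\alpha)$ and $\Gamma^-(\beta)\cap\Gamma^-(\alpha)$. Counting gives $k=|\Gamma^-(\alpha)|=1+(2a-2)+|\Gamma^+(\beta)\cap\Gamma^-(\alpha)|+|\Gamma^-(\beta)\cap\Gamma^-(\alpha)|$, that is, $|\Gamma^+(\beta)\cap\Gamma^-(\alpha)|+|\Gamma^-(\beta)\cap\Gamma^-(\alpha)|=k-2a+1$.

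The crux is to show these two cardinalities are equal. The key observation is that $G_\alpha$ acts as a group of digraph automorphisms of the subdigraph $D$ that $\Gamma$ induces on the vertex set $\Gamma^-(\alpha)$: indeed $G_\alpha$ fixes $\alpha$, hence permutes $\Gamma^-(\alpha)$ and preserves the arcs lying inside it, and it is transitive on $\Gamma^-(\alpha)=\beta^{G_\alpha}$. Thus $D$ is a \emph{vertex-transitive} digraph, and in any vertex-transitive digraph the in-degree equals the out-degree at every vertex (sum the in- and out-degrees over all vertices and compare). Applying this at $\beta$, the out-degree of $\beta$ in $D$ is $|\Gamma^+(\beta)\cap\Gamma^-(\alpha)|$ and its in-degree is $|\Gamma^-(\beta)\cap\Gamma^-(\alpha)|$, so these are equal. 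Together with the displayed relation this forces each to equal $(k-2a+1)/2$, as required, and the same holds for an arbitrary $\beta'\in\Gamma^-(\alpha)$ by transitivity. I expect the routine bookkeeping in identifying the row/column count with the constant $a$ (the orbit-length matching $|O|=|\delta_1'^{H_\delta}|$) to be the only fiddly point; the genuinely load-bearing idea is the vertex-transitivity argument giving $|\Gamma^+(\beta)\cap\Gamma^-(\alpha)|=|\Gamma^-(\beta)\cap\Gamma^-(\alpha)|$.
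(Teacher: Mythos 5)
Your proposal is correct and follows essentially the same route as the paper: both arguments partition $\Gamma^-(\alpha)\setminus\{\beta\}$ by Hamming distance from $\beta$ (using Lemmas~\ref{same-row-nonadjacent} and~\ref{vx-nbrs-adjacent} to get exactly $k-2a+1$ vertices adjacent to $\beta$), and then exploit the transitivity of $G_\alpha$ on $\Gamma^-(\alpha)$ in a double-counting argument over the induced subdigraph to split this number evenly between in- and out-neighbours. The only differences are cosmetic: you first deduce in-degree equals out-degree and then halve, while the paper counts the total number of arcs as $k(k-2a+1)/2$; and you spell out the bookkeeping (via Lemmas~\ref{only-2-cases}, \ref{same-orbit-lengths} and~\ref{abconstant}) showing the row/column constant for $\Gamma^-(\alpha)$ is the same $a$ as in Lemma~\ref{abconstant-directed}, a point the paper leaves implicit.
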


\begin{proof}
If $\beta' \in \Gamma^-(\alpha)$ with $d_H(\beta',\beta)=1$, then by Lemma~\ref{same-row-nonadjacent}, there cannot be an arc between $\beta$ and $\beta'$.  However, if $\beta' \in \Gamma^-(\alpha)$ with $d_H(\beta',\beta)=2$, then by Lemma~\ref{vx-nbrs-adjacent}, there must be an arc between $\beta$ and $\beta'$. Using the fact that there cannot be arcs in both directions between $\beta$ and any other vertex, we conclude that $\beta$ has precisely $k-2a+1$ arcs to or from other vertices in $\Gamma^-(\alpha)$.  Since $G$ is arc-transitive, every vertex of $\Gamma^-(\alpha)$ has the same number of out-neighbours in $\Gamma^-(\alpha)$ as every other vertex; also, every vertex of $\Gamma^-(\alpha)$ has the same number of in-neighbours in $\Gamma^-(\alpha)$ as every other vertex.  This shows that the total number of arcs both of whose endpoints lie within $\Gamma^-(\alpha)$ is $k(k-2a+1)/2$ (we divide by two since each arc has been counted at both ends).  Our conclusions are immediate.
\end{proof}

We can now generalize Lemma~\ref{vx-nbrs-adjacent} to vertices that share an in-neighbour.

\begin{lemma}\label{vx-in-nbrs-adjacent}
If $\gamma_1$ and $\gamma_2$ share an in-neighbour and $d_H(\gamma_1,\gamma_2)=2$, then they must be adjacent.
\end{lemma}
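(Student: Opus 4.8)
The plan is to reduce to the situation where the common in-neighbour is $\alpha$ itself, and then to argue by contradiction that two non-adjacent out-neighbours of $\alpha$ at Hamming distance $2$ cannot exist. First I would use the transitivity of $N$ on $V\Gamma$ together with arc-transitivity to move the common in-neighbour to $\alpha$, so that $\gamma_1,\gamma_2\in\Gamma^+(\alpha)$. By Lemma~\ref{square} we have $\delta_1'^{H_\delta}=\delta_2'^{H_\delta}$, and hence, since $\Gamma^+(\alpha)=\gamma^{G_\alpha}$, the whole out-neighbourhood $\Gamma^+(\alpha)$ is contained in the single square $S\times S$, where $S=\delta_1'^{H_\delta}$. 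Writing $\gamma_1=(s_1,t_1)$ and $\gamma_2=(s_2,t_2)$, the hypothesis $d_H(\gamma_1,\gamma_2)=2$ means $s_1\neq s_2$ and $t_1\neq t_2$, with all four entries in $S$. I then assume, for contradiction, that $\gamma_1$ and $\gamma_2$ are non-adjacent.

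The strategy is to produce a common out-neighbour $w$ of $\gamma_1$ and $\gamma_2$: once such a $w$ exists, the pair $\gamma_1,\gamma_2$ shares the out-neighbour $w$, is at Hamming distance $2$, and is non-adjacent, so $(\gamma_1,\gamma_2)\in A^2_+\Gamma$ and Lemma~\ref{n=2} forces $d_H(\gamma_1,\gamma_2)=1$, a contradiction. This is precisely the mechanism behind Lemma~\ref{vx-nbrs-adjacent}, but here the common neighbour must be found combinatorially rather than by symmetry. The point I want to stress is that a digraph automorphism always carries in-neighbours to in-neighbours and out-neighbours to out-neighbours, so one cannot simply dualize Lemma~\ref{vx-nbrs-adjacent}: our hypothesis supplies transitivity only on $A^2_+\Gamma$, not on $A^2_-\Gamma$, and the required adjacency statement for vertices sharing an \emph{in}-neighbour has to be extracted from the square structure itself.

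Concretely, I would analyse $\Gamma^+(\gamma_1)$ and $\Gamma^+(\gamma_2)$. Applying Lemma~\ref{abconstant-directed} at $\gamma_1$ and at $\gamma_2$ in place of $\alpha$, each of these out-neighbourhoods has size $k=a|S|$ with $a\geq 2$ and enjoys the constant row-and-column counts provided by that lemma. Using Lemma~\ref{square} at $\gamma_1$ and at $\gamma_2$, together with the orbit-length equalities of Lemmas~\ref{same-orbit-lengths} and~\ref{only-2-cases}, I would try to confine both $\Gamma^+(\gamma_1)$ and $\Gamma^+(\gamma_2)$ to one common square; a double-counting and pigeonhole estimate in the spirit of Lemma~\ref{k-a-b+1} would then aim to show that two subsets, each of size $k=a|S|$ with $a\geq 2$, cannot be disjoint inside that square, yielding the common out-neighbour $w$.

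The hard part will be this last step. I must verify that the out-neighbourhoods of $\gamma_1$ and $\gamma_2$ genuinely occupy a single common square of controlled size, and that the resulting count makes disjointness impossible. This requires pinning down exactly which $H$-orbit each out-neighbourhood lives in via Lemma~\ref{square}, and checking that the two orbits coincide; the two alternatives permitted by Lemma~\ref{square} (the out-orbit $\delta_1'^{H_\delta}$ versus the in-orbit $\delta_1^{H_\delta}$) are where the bookkeeping is most delicate. I expect the counting inequality to close only once these alternatives are resolved, with the bound $a\geq 2$ playing the same decisive role it did in Lemma~\ref{k-a-b+1}.
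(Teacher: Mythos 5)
Your reduction to $\gamma_1,\gamma_2\in\Gamma^+(\alpha)$ and your closing mechanism (a common out-neighbour $w$ would make $(\gamma_1,\gamma_2)\in A^2_+\Gamma$ and contradict Lemma~\ref{n=2}) are both sound, and your observation that Lemma~\ref{vx-nbrs-adjacent} cannot simply be dualized is exactly right. But the step that is supposed to produce $w$ is a genuine gap, in two ways. First, the confinement of $\Gamma^+(\gamma_1)$ and $\Gamma^+(\gamma_2)$ to one common square is precisely the orbit bookkeeping that the paper only carries out later (Lemma~\ref{delta-orbits-equal}, via Corollary~\ref{gamma-nbrs}), and those results rest on Lemma~\ref{more-k-a-b+1}, whose proof uses the present lemma; so you cannot invoke anything of that strength here without circularity, and nothing among the lemmas available at this point pins the two out-neighbourhoods into the same square. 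Second, and more decisively, even if you grant the confinement, the pigeonhole does not close: the square has $|S|^2$ elements and each out-neighbourhood has $k=a|S|$ elements, so disjointness is impossible only when $2a|S|>|S|^2$, that is, when $a>|S|/2$. All you know is $2\le a\le |S|$, and two subsets of the square meeting every row and every column in exactly $a$ points can perfectly well be disjoint whenever $2a\le|S|$ (take the points at cyclic ``offsets'' $0,\dots,a-1$ for one set and $a,\dots,2a-1$ for the other). So the counting, as proposed, cannot yield the common out-neighbour.

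The paper's proof avoids both problems by never looking for a common out-neighbour: it counts arcs inside $\Gamma^+(\alpha')$ itself. By arc-transitivity the pair $(\alpha',\gamma_1)$ is equivalent to $(\beta,\alpha)$, so Lemma~\ref{k-a-b+1} transfers to give $|\Gamma^+(\alpha')\cap\Gamma^-(\gamma_1)|=(k-2a+1)/2$. Since $G_{\alpha'}$ is transitive on $\Gamma^+(\alpha')$, the induced subgraph on $\Gamma^+(\alpha')$ is regular with in-valency equal to out-valency, hence each equal to $(k-2a+1)/2$; as $\Gamma^+(\gamma_1)\cap\Gamma^-(\gamma_1)=\emptyset$, the vertex $\gamma_1$ has exactly $k-2a+1$ distinct neighbours inside $\Gamma^+(\alpha')$. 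On the other hand, by Lemma~\ref{abconstant-directed} exactly $2a-2$ vertices of $\Gamma^+(\alpha')$ are at Hamming distance $1$ from $\gamma_1$, and by Lemma~\ref{same-row-nonadjacent} none of them is adjacent to $\gamma_1$; so the $k-2a+1$ neighbours are precisely the $k-2a+1$ vertices at Hamming distance $2$ from $\gamma_1$, and in particular $\gamma_2$ is adjacent to $\gamma_1$. Note that any repair of your argument would need essentially this same valency count, at which point adjacency follows directly and the common out-neighbour becomes unnecessary.
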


\begin{proof}
Call the shared in-neighbour $\alpha'$.  We will show that $\gamma_1$ has $(k-2a+1)/2$ out-neighbours and $(k-2a+1)/2$ in-neighbours in $\Gamma^+(\alpha')$.  Since there are $2a-2$ vertices in $\Gamma^+(\alpha')$ that are at Hamming distance 1 from $\gamma_1$ (by Lemma~\ref{abconstant-directed}), and $d_H(\gamma_1,\gamma_1)\neq 2$, there must be $k-2a+1$ vertices in $\Gamma^+(\alpha')$ that are at Hamming distance 2 from $\gamma_1$, so this count will show that all of these vertices are adjacent to $\gamma_1$, which yields the conclusion.

By Lemma~\ref{k-a-b+1} and arc-transitivity, $|\Gamma^+(\alpha')\cap \Gamma^-(\gamma_1)|=(k-2a+1)/2$.  Consider the induced subgraph on $\Gamma^+(\alpha')$.  Since $G_{\alpha'}$ is transitive on this set, the in-valency and out-valency of every vertex is constant in this subgraph, so every vertex has in-valency and out-valency $(k-2a+1)/2$, since $\gamma_1$ has this in-valency.
\end{proof}

\begin{lemma}\label{more-k-a-b+1}
For any $\gamma' \in \Gamma^+(\alpha)$, 
the sets  $\Gamma^-(\gamma')\cap\Gamma^+(\alpha)$ and  $\Gamma^+(\gamma')\cap\Gamma^+(\alpha)$
have cardinality $(k-2a+1)/2$.
\end{lemma}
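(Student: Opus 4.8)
The plan is to mirror the proof of Lemma~\ref{k-a-b+1} almost verbatim, replacing the in-neighbourhood $\Gamma^-(\alpha)$ by the out-neighbourhood $\Gamma^+(\alpha)$ throughout, and using Lemma~\ref{vx-in-nbrs-adjacent} in exactly the role that Lemma~\ref{vx-nbrs-adjacent} played there. First I would record the structure of $\Gamma^+(\alpha)$. By Lemma~\ref{square} we have $\delta_1'^{H_\delta}=\delta_2'^{H_\delta}$, so $\Gamma^+(\alpha)\subseteq \delta_1'^{H_\delta}\times\delta_1'^{H_\delta}$, and by Lemma~\ref{abconstant-directed} each row and each column of this square meets $\Gamma^+(\alpha)$ in exactly $a$ vertices, with $k=|\Gamma^+(\alpha)|=a|\delta_1'^{H_\delta}|$.

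Next I would fix $\gamma'=(\varepsilon_1,\varepsilon_2)\in\Gamma^+(\alpha)$ and sort the remaining vertices of $\Gamma^+(\alpha)$ by their Hamming distance from $\gamma'$. The vertices at Hamming distance $1$ are precisely the $a-1$ further vertices of $\Gamma^+(\alpha)$ sharing the first coordinate $\varepsilon_1$ together with the $a-1$ further vertices sharing the second coordinate $\varepsilon_2$; these two families are disjoint, giving $2a-2$ such vertices, and by Lemma~\ref{same-row-nonadjacent} none of them is adjacent to $\gamma'$. The remaining $k-1-(2a-2)=k-2a+1$ vertices of $\Gamma^+(\alpha)$ lie at Hamming distance $2$ from $\gamma'$; since each of them shares the in-neighbour $\alpha$ with $\gamma'$, Lemma~\ref{vx-in-nbrs-adjacent} shows that each is adjacent to $\gamma'$. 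Hence $\gamma'$ is joined by an arc, in one direction or the other, to exactly $k-2a+1$ vertices of $\Gamma^+(\alpha)$.

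To finish, I would pass to the digraph induced by $\Gamma$ on $\Gamma^+(\alpha)$. Arc-transitivity of $G$ makes $G_\alpha$ transitive on $\Gamma^+(\alpha)$, so in this induced digraph every vertex has the same out-valency $d^+$ and the same in-valency $d^-$, with $d^++d^-=k-2a+1$. Counting the arcs of the induced digraph at their tails and then at their heads gives $k\,d^+=k\,d^-$, whence $d^+=d^-=(k-2a+1)/2$. Since $\Gamma^-(\gamma')\cap\Gamma^+(\alpha)$ and $\Gamma^+(\gamma')\cap\Gamma^+(\alpha)$ are exactly the in-neighbours and the out-neighbours of $\gamma'$ inside this induced digraph, both have cardinality $(k-2a+1)/2$, as required.

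The only genuinely delicate point is the equality $d^+=d^-$; everything else is a transcription of Lemma~\ref{k-a-b+1}. I expect no real obstacle, since once $G_\alpha$-transitivity has made both valencies constant, the double count of arcs at tails versus heads forces $d^+=d^-$ immediately, and the divisibility of $k-2a+1$ by $2$ is already guaranteed by Lemma~\ref{k-a-b+1}.
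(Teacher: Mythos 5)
Your proposal is correct and follows essentially the same route as the paper: the paper's own proof is literally ``repeat the proof of Lemma~\ref{k-a-b+1} with $\Gamma^-(\alpha)$ replaced by $\Gamma^+(\alpha)$, $\beta$ by $\gamma'$, and Lemma~\ref{vx-nbrs-adjacent} by Lemma~\ref{vx-in-nbrs-adjacent}'', which is exactly the substitution you carry out. Your write-up merely makes explicit the details (the $2a{-}2$ count of Hamming-distance-$1$ vertices via Lemma~\ref{abconstant-directed}, and the tail/head double count giving $d^+=d^-$) that the paper leaves implicit.
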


\begin{proof}
Replacing $\Gamma^-(\alpha)$ by $\Gamma^+(\alpha)$ and $\beta$ by $\gamma'$ throughout the proof of Lemma~\ref{k-a-b+1}, with Lemma~\ref{vx-nbrs-adjacent} replaced by Lemma~\ref{vx-in-nbrs-adjacent} yields the desired conclusion.
%
%
\end{proof}

\begin{corollary}\label{gamma-nbrs}
If $\beta' \in \Gamma^-(\alpha)$, then $\Gamma^+(\beta')\subset \Gamma^+(\alpha) \cup \Gamma^-(\alpha) \cup HD_1(\alpha)$, where $HD_1(\alpha)$ is the set of vertices at Hamming distance 1 from $\alpha$.  

Also, if $\gamma' \in \Gamma^+(\alpha)$, then $\Gamma^-(\gamma')\subset \Gamma^+(\alpha) \cup \Gamma^-(\alpha) \cup HD_1(\alpha)$.
\end{corollary}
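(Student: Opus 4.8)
The plan is to establish the first containment and then obtain the second by the now-familiar symmetry between in-neighbourhoods and out-neighbourhoods. First I would fix $\beta' \in \Gamma^-(\alpha)$ and take an arbitrary $w \in \Gamma^+(\beta')$; the goal is to show $w$ lies in one of the three listed sets. The key observation is that both $\beta'$ and $\alpha$ lie in $\Gamma^-$ of some common out-neighbour structure that $w$ interacts with, so I want to control $d_H(w,\alpha)$. Since $G$ preserves Hamming distance (Remark~\ref{GpreservesHammingDist}) and since every arc joins vertices at Hamming distance equal to $d_H(\alpha,\beta)$, which is $2$ by Lemma~\ref{n=2}, the arc $(\beta',w)$ forces $d_H(\beta',w)=2$. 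Because $\beta' \in \Gamma^-(\alpha)$ we also have $d_H(\beta',\alpha)=2$. With $n=2$, the triangle-type behaviour of Hamming distance on $\Delta^2$ severely restricts $d_H(w,\alpha)$: it can only be $0$, $1$, $2$, $3$, or $4$, but distances $3$ and $4$ are impossible since two points of $\Delta^2$ are at Hamming distance at most $2$. Thus $d_H(w,\alpha)\in\{0,1,2\}$.

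Next I would rule out, or rather classify, each possibility. The value $d_H(w,\alpha)=0$ cannot occur since $w$ is an out-neighbour of $\beta'$ and $\alpha\neq\beta'$ (indeed $w=\alpha$ would require $\alpha\in\Gamma^+(\beta')$, but $\beta'\in\Gamma^-(\alpha)$ means $\alpha\in\Gamma^+(\beta')$, so actually this \emph{is} allowed and puts $w=\alpha$); here I should be careful, but in any case if $d_H(w,\alpha)=0$ then $w=\alpha\in\Gamma^+(\alpha)\cup\Gamma^-(\alpha)$ trivially, or we simply note $\alpha$ lies in none of these three sets and must instead check whether $w=\alpha$ can occur at all. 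The cleaner route is: if $d_H(w,\alpha)=2$, then since $w$ and $\alpha$ are both related to $\beta'$ I can use Lemma~\ref{vx-nbrs-adjacent} or Lemma~\ref{vx-in-nbrs-adjacent} to deduce $w$ is adjacent to $\alpha$, hence $w\in\Gamma^+(\alpha)\cup\Gamma^-(\alpha)$. If instead $d_H(w,\alpha)=1$, then $w\in HD_1(\alpha)$ by definition. This exhausts the cases and yields $\Gamma^+(\beta')\subseteq \Gamma^+(\alpha)\cup\Gamma^-(\alpha)\cup HD_1(\alpha)$.

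The main obstacle will be handling the adjacency deduction in the case $d_H(w,\alpha)=2$ correctly: I must verify that $w$ and $\alpha$ genuinely share an appropriate neighbour so that Lemma~\ref{vx-nbrs-adjacent} (which requires a common \emph{out}-neighbour, i.e.\ both in $\Gamma^-$ of something) or Lemma~\ref{vx-in-nbrs-adjacent} (which requires a common \emph{in}-neighbour) applies. Here $\beta'$ is a common in-neighbour of $w$ (since $w\in\Gamma^+(\beta')$) and of $\alpha$ is false---$\beta'\in\Gamma^-(\alpha)$ means $\alpha\in\Gamma^+(\beta')$, so $\beta'$ is a common in-neighbour of both $w$ and $\alpha$. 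Thus $w$ and $\alpha$ share the in-neighbour $\beta'$, and with $d_H(w,\alpha)=2$ Lemma~\ref{vx-in-nbrs-adjacent} forces them adjacent, as desired.

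For the second statement I would apply the identical argument to $\Gamma^{opp}$. Concretely, the hypotheses are symmetric under reversing all arcs: the excerpt already notes that $\wedge$-transitivity forces transitivity on $A^2_-\Gamma$ as well, so all the lemmas used have in/out-reversed analogues. Taking $\gamma'\in\Gamma^+(\alpha)$ and $w\in\Gamma^-(\gamma')$, the pair $\gamma'$ serves as a common out-neighbour of $w$ and of $\alpha$, and running the same case analysis on $d_H(w,\alpha)\in\{0,1,2\}$ (with Lemma~\ref{vx-nbrs-adjacent} handling the distance-$2$ case via the shared out-neighbour $\gamma'$) gives $\Gamma^-(\gamma')\subseteq\Gamma^+(\alpha)\cup\Gamma^-(\alpha)\cup HD_1(\alpha)$, completing the corollary.
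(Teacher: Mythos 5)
Your proof is correct, and it takes a genuinely different---and considerably shorter---route than the paper's. The paper argues by counting: Lemmas~\ref{k-a-b+1} and~\ref{more-k-a-b+1} give $|\Gamma^+(\beta')\cap\Gamma^-(\alpha)|=|\Gamma^+(\beta')\cap\Gamma^+(\alpha)|=(k-2a+1)/2$, and Lemma~\ref{abconstant-directed} with vertex-transitivity identifies the remaining $2a-1$ out-neighbours of $\beta'$ as $\alpha$ itself together with $a-1$ vertices agreeing with $\alpha$ in each coordinate, so that all $k$ out-neighbours are located. You instead do a three-way case analysis on $d_H(w,\alpha)\in\{0,1,2\}$ (which is trivial once $n=2$; your preliminary computation that $d_H(\beta',w)=d_H(\beta',\alpha)=2$ is not actually needed): distance $1$ puts $w\in HD_1(\alpha)$ by definition, and distance $2$ is dispatched by noting that $\beta'$ is a common in-neighbour of $w$ and $\alpha$, so Lemma~\ref{vx-in-nbrs-adjacent} forces adjacency; dually, in the second statement $\gamma'$ is a common out-neighbour, so Lemma~\ref{vx-nbrs-adjacent} applies directly (no appeal to $\Gamma^{opp}$ is needed). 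Both of those lemmas precede the corollary, so there is no circularity; the counting machinery you avoid is hidden only inside the proof of Lemma~\ref{vx-in-nbrs-adjacent}, which you may use as a black box. What the paper's computation buys beyond the bare containment is the exact count of neighbours of $\beta'$ in each of the three sets, but the statement does not require this. One wrinkle you wrestled with deserves a clean resolution: the case $w=\alpha$ genuinely occurs (since $\beta'\in\Gamma^-(\alpha)$ gives $\alpha\in\Gamma^+(\beta')$), and $\alpha$ lies in none of the three sets as literally defined---your passing claim that $\alpha\in\Gamma^+(\alpha)\cup\Gamma^-(\alpha)$ is false, as there are no loops. This is an imprecision in the corollary's statement itself, which the paper's own proof silently absorbs by counting $\alpha$ among the ``remaining $2a-1$'' out-neighbours; the correct reading is that the union should also contain $\alpha$ (equivalently, replace $HD_1(\alpha)$ by the vertices at Hamming distance at most $1$ from $\alpha$), and with that reading your argument is complete.
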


\begin{proof}
We have $|\Gamma^+(\beta')|=k$.  The sets $\Gamma^+(\alpha)$ and $\Gamma^-(\alpha)$ are disjoint since we are in the directed case, and Lemmas~\ref{k-a-b+1} and \ref{more-k-a-b+1} together with arc-transitivity tell us that  $|\Gamma^+(\beta')\cap\Gamma^+(\alpha)|= |\Gamma^+(\beta')\cap\Gamma^-(\alpha)|=(k-2a+1)/2$, so this accounts for all but $2a-1$ of the out-neighbours of $\beta'$.  But $\alpha \in \Gamma^+(\beta')$, and by Lemma~\ref{abconstant-directed} with vertex-transitivity, we see that for $i= 1, 2$ there must be precisely $a-1$ other out-neighbours of $\beta'$ that have the same entry as $\alpha$ in coordinate $i$.  By Lemma~\ref{same-row-nonadjacent}, none of these vertices is in either $\Gamma^+(\alpha)$ or $\Gamma^-(\alpha)$, so these together with $\alpha$ itself form the remaining $2a-1$ out-neighbours of $\beta'$.

The proof for $\gamma'$ is analogous.
\end{proof}

\begin{lemma}\label{delta-orbits-equal}
Suppose that $\delta_1^{H_{\delta}}=\delta_1'^{H_{\delta}}$.  Then for any $\delta' \in \delta_1^{H_{\delta}}$, we have $\delta^{H_{\delta'}}\setminus \{\delta\}=\delta'^{H_{\delta}}\setminus\{\delta'\}$.
\end{lemma}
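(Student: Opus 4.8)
The plan is to reduce the claimed set equality to a single inclusion by a cardinality count, and then to establish that inclusion by producing explicit automorphisms in $G$ via Lemma~\ref{Psfavourite} and Remark~\ref{action} and reading off orbit membership from how they move $\alpha$ and a chosen out-neighbour.

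First I would record the basic data. Since $\delta'\in\delta_1^{H_\delta}$ we have $\delta'^{H_\delta}=\delta_1^{H_\delta}$, and because $H_\delta$ fixes $\delta$ while $\delta_1\ne\delta$, the orbit $\delta_1^{H_\delta}$ avoids $\delta$; hence $\delta\ne\delta'$, $\delta\notin\delta'^{H_\delta}$, and (as $H_{\delta'}$ fixes $\delta'$) also $\delta'\notin\delta^{H_{\delta'}}$. Thus $\delta^{H_{\delta'}}\setminus\{\delta\}$ and $\delta'^{H_\delta}\setminus\{\delta'\}$ are both subsets of $\Delta\setminus\{\delta,\delta'\}$, and by Lemma~\ref{same-orbit-lengths} they have the same cardinality $|\delta^{H_{\delta'}}|-1=|\delta'^{H_\delta}|-1$. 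Consequently it suffices to prove one inclusion, say $\delta'^{H_\delta}\setminus\{\delta'\}\subseteq\delta^{H_{\delta'}}$; equality then follows for free.

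It is clarifying to reformulate the target. Writing $R=(\delta,\delta')^H$ for the orbital (Definition~\ref{orbital}) through $(\delta,\delta')$, one checks $\delta'^{H_\delta}=\{\nu:(\delta,\nu)\in R\}$ and $\delta^{H_{\delta'}}=\{\nu:(\nu,\delta')\in R\}$, so the inclusion to be proved says exactly that every out-neighbour of $\delta$ in $R$ other than $\delta'$ is an in-neighbour of $\delta'$ in $R$. To exhibit such arcs I would argue as follows: given $\eta\in\delta'^{H_\delta}$ with $\eta\ne\delta'$, write $\eta=(\delta')^{h_0}$ with $h_0\in H_\delta$, and combine a non-swap lift $(h_0,h_0')\in G_\alpha$ of $h_0$ (Lemma~\ref{Psfavourite} with $\varepsilon_1=\varepsilon_2=\delta$) with a coordinate-swapping element of $G$, which exists by Remark~\ref{action} and, using that $N$ is transitive on $V\Gamma$, can be arranged to interchange $(\delta,\delta')$ and $(\delta',\delta)$. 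I would then track how this composite automorphism moves $\alpha=(\delta,\delta)$ and an out-neighbour $\gamma_*\in\Gamma^+(\alpha)$ whose first coordinate is $\delta'$ (such a $\gamma_*$ is obtained from $\gamma$ by a lift of an element of $H_\delta$ carrying $\delta_1'$ to $\delta'$, whose second coordinate stays in $\delta_1'^{H_\delta}=\delta_2'^{H_\delta}$ by Lemma~\ref{square}). The image of $\alpha$ should be a vertex $(\zeta,\delta)$ with $\zeta\in\delta^{H_{\delta'}}$, and arc-transitivity together with the Hamming invariance of $G$ (Remark~\ref{GpreservesHammingDist}) and the adjacency dichotomy of Lemmas~\ref{same-row-nonadjacent} and~\ref{vx-nbrs-adjacent} should force $\zeta=\eta$, yielding $\eta\in\delta^{H_{\delta'}}$.

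The main obstacle is that $G$ need not contain the diagonal automorphisms $(h,h)$, so one cannot simply conjugate the point stabiliser $H_\delta$ onto $H_{\delta'}$: the orbit $\delta'^{H_\delta}$ is governed by the stabiliser of $\delta$ whereas $\delta^{H_{\delta'}}$ is governed by the stabiliser of $\delta'$, and the two must be matched only indirectly through the swap. The delicate bookkeeping therefore lies in controlling the coordinate entries under the swap element, so that the image of $\alpha$ has second coordinate exactly $\delta$ (guaranteeing its first coordinate genuinely lies in $\delta^{H_{\delta'}}$) while ranging over all of $\delta'^{H_\delta}\setminus\{\delta'\}$ as $\eta$ varies. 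Once this single inclusion is secured, the cardinality count from the first paragraph closes the argument.
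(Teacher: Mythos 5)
Your opening reduction is sound and matches the paper: by Lemma~\ref{same-orbit-lengths} the two orbits have equal size, $\delta\notin\delta'^{H_\delta}$ and $\delta'\notin\delta^{H_{\delta'}}$, so a single inclusion suffices; and your $\gamma_*$ is exactly the vertex $\gamma'\in\Gamma^+(\alpha)$ with first entry $\delta'$ that the paper constructs via Lemma~\ref{Psfavourite}. The gap is in the engine of your argument. You assert that a coordinate-swapping element of $G$ ``can be arranged to interchange $(\delta,\delta')$ and $(\delta',\delta)$,'' citing Remark~\ref{action} and the transitivity of $N$. Neither provides this. Remark~\ref{action} gives a swap in $G_\alpha$, i.e.\ an element $(1\ 2)(s_1,s_2)$ with $s_1,s_2\in H_\delta$; computing the action, such an element interchanges $(\delta,\delta')$ and $(\delta',\delta)$ only if $s_1,s_2\in H_\delta\cap H_{\delta'}$, and nothing in the hypotheses guarantees that $G$ contains a swap both of whose components also fix $\delta'$. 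Transitivity of $N$ yields an element \emph{mapping} $(\delta,\delta')$ to $(\delta',\delta)$, but the image of $(\delta',\delta)$ under that element is uncontrolled, so it need not interchange the pair, and composing it with the $G_\alpha$-swap does not repair this. Worse, even if such an interchanging swap existed, its components would lie in $H_\delta\cap H_{\delta'}$, so it would fix $\alpha$ --- as does your lift $(h_0,h_0')\in G_\alpha$ --- hence every composite of these two elements fixes $\alpha$, and the vertex $(\zeta,\delta)$ you propose to track is just $\alpha$ itself. Finally, no mechanism is given for why Lemmas~\ref{same-row-nonadjacent} and~\ref{vx-nbrs-adjacent} would ``force $\zeta=\eta$'': these are local adjacency statements and cannot single out which element of $\delta^{H_{\delta'}}$ occurs.

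What is missing is a \emph{global} constraint on where the in-neighbours of $\gamma_*$ can lie, and this is what the paper supplies. Under the hypothesis $\delta_1^{H_\delta}=\delta_1'^{H_\delta}$, Lemma~\ref{abconstant-directed} (via the count $k=a|\delta_1^{H_\delta}|$) shows that \emph{every} in- and out-neighbour of $\alpha$ has first entry in $\delta_1^{H_\delta}$; combining with Corollary~\ref{gamma-nbrs}, which places $\Gamma^-(\gamma')$ inside $\Gamma^+(\alpha)\cup\Gamma^-(\alpha)\cup HD_1(\alpha)$, every in-neighbour of $\gamma'$ has first entry in $(\delta_1^{H_\delta}\cup\{\delta\})\setminus\{\delta'\}$. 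Then Lemma~\ref{Psfavourite} applied at the point $\gamma'$ itself (in its full generality, not with $\varepsilon_1=\varepsilon_2=\delta$) produces coordinate-preserving elements of $G_{\gamma'}$ under which the first entry of the image of $\alpha$ sweeps out all of $\delta^{H_{\delta'}}$; since these images are in-neighbours of $\gamma'$, one gets $\delta^{H_{\delta'}}\subseteq\delta'^{H_\delta}\cup\{\delta\}$, and the cardinality count finishes. Note this is the inclusion \emph{opposite} to the one you chose: the available constraints bound $\delta^{H_{\delta'}}$ from above, whereas your direction $\delta'^{H_\delta}\setminus\{\delta'\}\subseteq\delta^{H_{\delta'}}$ demands exhibiting, for each $\eta$, an explicit element of $H_{\delta'}$ carrying $\delta$ to $\eta$ --- precisely what your unconstructed interchanging element was supposed to manufacture.
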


\begin{proof}
We will show that $\delta^{H_{\delta'}}=(\delta'^{H_{\delta}}\cup \{\delta\}) \setminus\{\delta'\}$.  Clearly $\delta \in \delta^{H_{\delta'}}\setminus\delta'^{H_{\delta}}$ and $\delta' \in \delta'^{H_{\delta}}\setminus\delta^{H_{\delta'}}$, and the cardinalities of the two orbits are equal (by Lemma~\ref{same-orbit-lengths}), so if we can show that $\delta^{H_{\delta'}}\subset \delta'^{H_{\delta}}\cup\{\delta\}$, that will be sufficient.  

Since $\gamma=(\delta_1',\delta_2') \in \Gamma^+(\alpha)$ and $\delta'\in \delta_1'^{H_{\delta}}$, Lemma~\ref{Psfavourite} tells us that there is some element of $G_{\alpha}$ that fixes the first coordinate (so fixes each of the two coordinates) and takes $\gamma$ to some vertex $\gamma'$ whose first entry is $\delta'$.  Clearly, $\gamma' \in \Gamma^+(\alpha)$.

A similar argument shows that there is some in-neighbour of $\alpha$ whose first entry is any fixed element of $\delta_1^{H_{\delta}}$.   Thus, there exists some in-neighbour of $\alpha$ and some out-neighbour of $\alpha$ whose first entries are any fixed element of $\delta_1^{H_{\delta}}$.  
In fact, Lemma~\ref{abconstant-directed} tells us that there exists $a$ in-neighbours and $a$ out-neighbours of $\alpha$ in each of these columns.  These are in fact all of the in- and out-neighbours of $\alpha$, since $k=a|\delta_1^{H_{\delta}}|$.

By Corollary~\ref{gamma-nbrs}, $\Gamma^-(\gamma') \subset \Gamma^+(\alpha)\cup \Gamma^-(\alpha)\cup HD_1(\alpha)$.  Now, we have just concluded that any neighbour of $\alpha$ must have its first entry in the set $\delta_1^{H_{\delta}}$.  The in-neighbours of $\gamma'$, therefore, must have their first entries in the set $(\delta_1^{H_{\delta}} \cup\{\delta\})\setminus\{\delta'\}$.  
But since $\alpha$ is an in-neighbour of $\gamma'$, Lemma~\ref{Psfavourite} tells us that there is an element of $G$ that fixes $\gamma'$, fixes the coordinates, and takes the column containing $\delta$ to the column indexed by any element of $\delta^{H_{\delta'}}$.  
So these indices must be elements of $(\delta_1^{H_{\delta}}\cup\{\delta\})\setminus\{\delta'\},$ 
meaning that we must have $\delta^{H_{\delta'}} \subset  \delta'^{H_{\delta}} \cup\{\delta\}$, as desired.
\end{proof}

\begin{lemma}\label{one-square-2-trans}
Suppose $\delta_1^{H_{\delta}}=\delta_2^{H_{\delta}}=\delta_1'^{H_{\delta}}=\delta_2'^{H_{\delta}}$.  Then $H$ is $2$-transitive on $\Delta$.
\end{lemma}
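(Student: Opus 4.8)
The plan is to prove that the point stabiliser $H_\delta$ is transitive on $\Delta\setminus\{\delta\}$; since $T\le H$ is transitive on $\Delta$, this is exactly $2$-transitivity. Write $O=\delta_1^{H_\delta}$, so that by hypothesis $\delta_1,\delta_2,\delta_1',\delta_2'\in O$, and put $\Delta_0=O\cup\{\delta\}$. Because $T_\delta\le H_\delta$ does not fix $\delta_1$, we have $|O|\ge 2$. The whole argument reduces to showing $\Delta_0=\Delta$.

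First I would record that for every $x\in\Delta_0$ the set $\Delta_0\setminus\{x\}$ is a single $H_x$-orbit. For $x=\delta$ this is just the orbit $O$. For $x=\delta'\in O$, the hypothesis gives $\delta_1^{H_\delta}=\delta_1'^{H_\delta}$, so Lemma~\ref{delta-orbits-equal} applies and yields $\delta^{H_{\delta'}}\setminus\{\delta\}=\delta'^{H_\delta}\setminus\{\delta'\}=O\setminus\{\delta'\}$, whence $\delta^{H_{\delta'}}=\Delta_0\setminus\{\delta'\}$. Consequently $\Delta_0=\{x\}\cup(\Delta_0\setminus\{x\})$ is a union of two $H_x$-orbits, so each $H_x$ with $x\in\Delta_0$ stabilises $\Delta_0$ setwise. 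Let $K=H_{\{\Delta_0\}}$ be the setwise stabiliser; then $H_\delta\le K$ and $K$ contains every such $H_x$. Since $|O|\ge 2$, the orbits $\delta^{H_{\delta'}}=\Delta_0\setminus\{\delta'\}$ (as $\delta'$ ranges over $O$) cover $\Delta_0$, so $\delta^K=\Delta_0$. As $H_\delta\le K\le H$, the standard correspondence between blocks and overgroups of a point stabiliser (see~\cite{DM}) shows that $\Delta_0=\delta^K$ is a block of imprimitivity for $H$ on $\Delta$.

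The punchline combines this block with the product structure and connectedness. As $\Delta_0$ is a block, its $H$-translates form a block system $\mathcal{B}$ on $\Delta$, and hence $W=H\wr\Sym(2)$ (and so $G\le W$) preserves the partition of $V\Gamma=\Delta^2$ into the product blocks $B\times B'$ with $B,B'\in\mathcal{B}$. Since all four orbits equal $O$, the displayed containments $\Gamma^-(\alpha)=\beta^{G_\alpha}\subseteq\beta^{W_\alpha}$ and $\Gamma^+(\alpha)=\gamma^{G_\alpha}\subseteq\gamma^{W_\alpha}$ force $\Gamma^-(\alpha),\Gamma^+(\alpha)\subseteq O\times O\subseteq\Delta_0\times\Delta_0$; that is, every neighbour of $\alpha$ lies in the same product block as $\alpha$. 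Transporting this by vertex- and arc-transitivity of $G$, every neighbour of every vertex $v$ lies in the product block containing $v$. Connectedness of $\Gamma$ then forces $\Delta^2=\Delta_0\times\Delta_0$, i.e.\ $\Delta_0=\Delta$. Therefore $H_\delta$ is transitive on $\Delta\setminus\{\delta\}=O$, and $H$ is $2$-transitive on $\Delta$.

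I expect the main obstacle to be the middle step: extracting from Lemma~\ref{delta-orbits-equal} that each $\Delta_0\setminus\{x\}$ is a genuine single $H_x$-orbit for \emph{every} $x\in\Delta_0$, and then packaging this symmetry into the assertion that $\Delta_0$ is a block. It is precisely this block property that licenses the $G$-invariant product partition of $\Delta^2$, without which the final connectedness argument would not even get started.
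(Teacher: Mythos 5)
Your proof is correct, but it takes a genuinely different route from the paper's. The paper proves the same trapping statement locally: it shows that any vertex $\varepsilon$ of $\bigl(\delta_1'^{H_\delta}\cup\{\delta\}\bigr)\times\bigl(\delta_1'^{H_\delta}\cup\{\delta\}\bigr)$ having one in- or out-neighbour inside this set has \emph{all} of its in- and out-neighbours inside it. That argument needs arc-transitivity at each such vertex (to bound $\Gamma^-(\varepsilon)$ as a $G_\varepsilon$-orbit inside a union of orbit-products, which Lemma~\ref{delta-orbits-equal} then places back in the set), and --- because this only controls in-neighbours --- an extra counting step via Lemma~\ref{more-k-a-b+1}, namely $|\Gamma^+(\mu)\cap\Gamma^+(\varepsilon)|=(k-2a+1)/2>0$, to bootstrap the same control for out-neighbours; connectedness then finishes. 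You instead package Lemma~\ref{delta-orbits-equal} into the purely group-theoretic statement that $\Delta_0=\delta_1^{H_\delta}\cup\{\delta\}$ is a block of imprimitivity for $H$ (via the correspondence between blocks and overgroups of a point stabiliser), so that the product blocks $B\times B'$ give a $G$-invariant partition of $V\Gamma$; since $\Gamma^+(\alpha)$ and $\Gamma^-(\alpha)$ are contained in $\delta_1^{H_\delta}\times\delta_1^{H_\delta}$, hence in the block of $\alpha$, vertex-transitivity alone traps every vertex's in- and out-neighbours in its own block, and connectedness collapses the partition to a single block. Your route avoids Lemma~\ref{more-k-a-b+1} entirely and treats in- and out-neighbours symmetrically in one stroke, at the cost of invoking the block--overgroup correspondence; the paper's route is more elementary and self-contained graph-theoretically, but needs the asymmetric two-stage argument and the counting lemma. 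Both arguments ultimately rest on Lemma~\ref{delta-orbits-equal} together with connectedness, and both are sound.
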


\begin{proof}
We claim that if $\Delta \neq  \delta_1'^{H_{\delta}}\cup \{\delta\}$, then $\Gamma$ is disconnected. This will be a contradiction, so we conclude that $\Delta = \delta_1'^{H_{\delta}}\cup \{\delta\}$, 
which forces $H$ to be 2-transitive on $\Delta$, completing the proof.

If we can prove that whenever $\varepsilon$ is in $(\delta_1'^{H_{\delta}}\cup \{\delta\})\times (\delta_1'^{H_{\delta}}\cup \{\delta\})$ and has some in- or out-neighbour in this set, all of its in- and out-neighbours must be in this set, this will establish the claim we made in the preceding paragraph, and so complete the proof.  
Let $\varepsilon=(\delta_3, \delta_4)$ and $\mu=(\delta_3', \delta_4')$, where $\delta_3, \delta_4, \delta_3', \delta_4' \in \delta_1'^{H_{\delta}}\cup \{\delta\}$, and suppose that $\mu$ is either an in-neighbour or out-neighbour of $\varepsilon$.  Let us suppose that $\mu \in \Gamma^-(\varepsilon)$. 
By arc-transitivity, we have $\Gamma^-(\varepsilon) = \mu^{G_\varepsilon} \subseteq \left( (\delta_3')^{H_{\delta_3}} \times (\delta_4')^{H_{\delta_4}}\right) \cup \left( (\delta_4')^{H_{\delta_3}} \times (\delta_3')^{H_{\delta_4}}\right)$. Using Lemma~\ref{delta-orbits-equal} it is straightforward to verify that this is a subset of $\left((\delta_1')^{H_{\delta}} \cup \{\delta\}\right)\times \left((\delta_1')^{H_{\delta}} \cup \{\delta\}\right)$. It remains to show that $\Gamma^+(\varepsilon)$ is also in this set.
Notice that a similar argument shows that $\Gamma^+(\mu) \subseteq (\delta_1'^{H_{\delta}}\cup \{\delta\})\times (\delta_1'^{H_{\delta}}\cup \{\delta\})$ since $\mu$ has one out-neighbour (namely $\varepsilon$) in this set.  But by Lemma~\ref{more-k-a-b+1} and arc-transitivity, $\Gamma^+(\mu) \cap \Gamma^+(\varepsilon)$ has cardinality $(k-2a+1)/2$.  Since Lemma~\ref{abconstant-directed} shows that $a$ divides $k$ and $a \ge 2$, we must have $(k-2a+1)/2>0$, so $\varepsilon$ has at least one out-neighbour in $(\delta_1'^{H_{\delta}}\cup \{\delta\})\times (\delta_1'^{H_{\delta}}\cup \{\delta\})$, from which a similar argument shows that all out-neighbours of $\varepsilon$ are in this set.  

The case in which $\mu \in \Gamma^+(\varepsilon)$ is precisely analogous to the above case.
\end{proof}

\begin{lemma}\label{one-square-done}
It is not possible to have $\delta_1^{H_{\delta}}=\delta_2^{H_{\delta}}=\delta_1'^{H_{\delta}}=\delta_2'^{H_{\delta}}$.
\end{lemma}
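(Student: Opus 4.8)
The plan is to argue by contradiction: assume $\delta_1^{H_\delta}=\delta_2^{H_\delta}=\delta_1'^{H_\delta}=\delta_2'^{H_\delta}$ and derive an impossibility. The first move is to feed this hypothesis into Lemma~\ref{one-square-2-trans}, which tells us that $H$ acts $2$-transitively on $\Delta$. Writing $S=\Delta\setminus\{\delta\}$, this means $S$ is a single $H_\delta$-orbit of size $m-1$, and hence --- using Lemma~\ref{square} and Lemma~\ref{abconstant-directed} --- both $\Gamma^+(\alpha)$ and $\Gamma^-(\alpha)$ are contained in the single ``square'' $S\times S$, are disjoint (we are in the directed case), and each meets every row and every column of $S\times S$ in exactly $a$ vertices, with $k=a(m-1)$ and $a\ge 2$.

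Next I would record the structural consequences that distinguish $\alpha$ from the vertices it is not adjacent to. By Lemma~\ref{same-row-nonadjacent} the Hamming-distance-$1$ vertices $R_1=\{(\delta,\nu):\nu\in S\}$ and $C_1=\{(\mu,\delta):\mu\in S\}$ are non-adjacent to $\alpha$, while Lemma~\ref{n=2} (for out-neighbours) and Lemma~\ref{vx-in-nbrs-adjacent} (for in-neighbours) show that $\alpha$ shares \emph{no} common out-neighbour and \emph{no} common in-neighbour with any Hamming-distance-$2$ vertex that is non-adjacent to it. From Corollary~\ref{gamma-nbrs} each neighbour of $\alpha$ has exactly $a-1$ out- (respectively in-) neighbours in each of $R_1$ and $C_1$, and a short argument with Lemma~\ref{Psfavourite} shows that $G_\alpha$ is transitive on each of $R_1$ and $C_1$; together these facts pin down precisely how the neighbourhood of $\alpha$ interacts with the Hamming-distance-$1$ and Hamming-distance-$2$ vertices.

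A warning I would flag explicitly is that the naive first-order counts are \emph{not} enough. Writing $c(v)=|\Gamma^+(\alpha)\cap\Gamma^+(v)|$ and using $\sum_{v\ne\alpha}c(v)=k(k-1)$, one fills in the value of $c(v)$ on $\Gamma^+(\alpha)$ and on $\Gamma^-(\alpha)$ (both $(k-2a+1)/2$, by Lemmas~\ref{more-k-a-b+1} and~\ref{k-a-b+1}), on $R_1\cup C_1$ (the constant $a(a-1)$), and on the remaining Hamming-distance-$2$ non-neighbours (which is $0$); the identity then collapses to $k=a(m-1)$. Every such double count is automatically satisfied, so the contradiction has to come from a finer, more structural source rather than from mere arc-counting.

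The hard part, therefore, is to extract that finer contradiction, and this is where I would concentrate the effort. My preferred route is to examine the $G_\alpha$-arc-transitive digraph induced on $\Gamma^+(\alpha)$, together with its rigid row/column partition and its interaction with $R_1$ and $C_1$, and to distil from it an arc-transitive \emph{symmetric tournament}; Lemma~\ref{tournamentprel} would then force the socle of the relevant group to act regularly, which, transported back to the action of $H$ on $\Delta$, is incompatible either with the $2$-transitivity just obtained or with the standing hypothesis that $T$ is not regular on $\Delta$. A viable alternative is to imitate the undirected analysis of Subsection~\ref{und} (Lemmas~\ref{ab-max-independent}--\ref{co1}) inside the square $S\times S$, forcing $\Gamma$ to have diameter $2$ and to be symmetric about $\alpha$, which directly contradicts $\Gamma^+(\alpha)\cap\Gamma^-(\alpha)=\emptyset$. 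I expect the genuine obstacle to be making the tournament (or the forced symmetry) appear cleanly, precisely because, as noted above, the configuration is numerically self-consistent and only its group-theoretic rigidity rules it out.
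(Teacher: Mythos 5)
Your proposal assembles the right preliminaries (Lemma~\ref{one-square-2-trans} giving $2$-transitivity, the square structure of $\Gamma^{\pm}(\alpha)$, the counts from Lemmas~\ref{abconstant-directed}, \ref{k-a-b+1} and~\ref{more-k-a-b+1}), but it never actually produces the contradiction: the ``hard part'' is left as two speculative routes, neither of which is carried out, and both of which face concrete obstacles. The induced digraph on $\Gamma^+(\alpha)$ is \emph{not} a tournament --- pairs in the same row or column are non-adjacent by Lemma~\ref{same-row-nonadjacent} --- so Lemma~\ref{tournamentprel} cannot be invoked without a ``distillation'' step that you do not supply, and for which there is no obvious $G_\alpha$-invariant candidate (a diagonal transversal of $S\times S$ need not be preserved by $G_\alpha$). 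Likewise, the undirected analysis of Subsection~\ref{und} leans on symmetry of adjacency throughout, so ``imitating'' it in the directed case is not a proof sketch but a restatement of the difficulty.

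Moreover, your warning that the contradiction cannot come from counting points in the wrong direction: the paper's proof begins with exactly a counting consequence. Since $(k-2a+1)/2\in\mathbb{Z}$ (Lemma~\ref{k-a-b+1}), $k$ is odd; since $k=a|\delta_1^{H_\delta}|$ (Lemma~\ref{abconstant-directed}), both $a$ and $|\delta_1^{H_\delta}|$ are odd; and then Lemma~\ref{one-square-2-trans} gives $|\Delta|=|\delta_1^{H_\delta}|+1$, which is therefore \emph{even}. Hence the transitive group $T$ has even order and contains an involution $t$, say with $\delta^t=\delta'\neq\delta$. Choosing an out-neighbour $(\delta',\delta'')$ of $\alpha$ and $h\in H_\delta$ with $\delta'^h=\delta''$ (possible by $2$-transitivity), the element $g=(t,h^{-1}th)\in T\times T=N\leq G$ satisfies $\alpha^g=(\delta',\delta'')$ and $(\delta',\delta'')^g=\alpha$, so $g$ reverses an arc, contradicting $\Gamma^+(v)\cap\Gamma^-(v)=\emptyset$ in the directed case. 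This parity-plus-involution argument is the missing idea; without it (or a worked-out substitute), your write-up is a plan rather than a proof.
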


\begin{proof}
Towards a contradiction, suppose that $\delta_1^{H_{\delta}}=\delta_2^{H_{\delta}}=\delta_1'^{H_{\delta}}=\delta_2'^{H_{\delta}}$.
By Lemma~\ref{k-a-b+1} we have $(k-2a+1)/2 \in \mathbb{Z}$, so $k$ must be odd.  But $k=a|\delta_1^{H_{\delta}}|$ (by Lemma~\ref{abconstant-directed}), so $a$ and $|\delta_1^{H_{\delta}}|$ are both odd.  Then by Lemma~\ref{one-square-2-trans}, $|\Delta|=|\delta_1^{H_{\delta}}|+1$, so $|\Delta|$ must be even.

Now, $T$ is a transitive group acting on $\Delta$, so $T$ must have even order.  Hence $T$ contains an involution $t$.  Without loss of generality, we can assume that $\delta^t=\delta' \neq \delta$, and $\delta'^t=\delta$.  Since $\Delta=\delta_1^{H_\delta} \cup \{\delta\}$, we have $\delta' \in \delta_1^{H_{\delta}}=(\delta_1')^{H_\delta}$. Now by Lemma~\ref{abconstant-directed}, $\alpha$ has $a$ out-neighbours whose first entry is $\delta'$; we choose one of these, $(\delta', \delta'')$.

Since $H$ is 2-transitive on $\Delta$ (by Lemma~\ref{one-square-2-trans}), there exists $h \in H_{\delta}$ such that $\delta'^h=\delta''$.  Now since $T \triangleleft H$, we have $h^{-1}th \in T$.  We know that $T \times T=N \leq G$; consider the action of $g=(t, h^{-1}th) \in G$ on $\alpha$ and on $(\delta', \delta'')$.  We have $(\delta, \delta)^{(t, h^{-1}th)}=(\delta', \delta'')$ since $h \in H_{\delta}$.  And $(\delta', \delta'')^{(t, h^{-1}th)}=(\delta, (\delta')^{th})=(\delta, \delta^h)=(\delta,\delta)$.  So $g$ reverses this arc, contradicting the fact that we are in the directed case.
\end{proof}

By Lemma~\ref{square} we have $(\delta_1')^{H_\delta}=(\delta_2')^{H_\delta}$, so by Lemma~\ref{only-2-cases} $\delta_1^{H_\delta}=\delta_2^{H_\delta}$ and the hypothesis eliminated in our next  lemma is the only remaining possibility.

\begin{lemma}\label{two-squares-done}
It is not possible to have $\delta_1^{H_{\delta}}\neq \delta_1'^{H_{\delta}}.$
\end{lemma}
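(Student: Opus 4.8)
The plan is to derive a contradiction from the standing hypothesis $\delta_1^{H_\delta}\neq(\delta_1')^{H_\delta}$; combined with Lemmas~\ref{square}, \ref{only-2-cases} and~\ref{one-square-done}, this will show that the directed case with $d_H(\alpha,\beta)=2$ cannot occur at all. Write $O_{\mathrm{in}}=\delta_1^{H_\delta}=\delta_2^{H_\delta}$ and $O_{\mathrm{out}}=(\delta_1')^{H_\delta}=(\delta_2')^{H_\delta}$ (these coincidences come from Lemmas~\ref{square} and~\ref{only-2-cases}); by hypothesis $O_{\mathrm{in}}\neq O_{\mathrm{out}}$, and arc-transitivity gives $\Gamma^-(\alpha)\subseteq O_{\mathrm{in}}\times O_{\mathrm{in}}$ and $\Gamma^+(\alpha)\subseteq O_{\mathrm{out}}\times O_{\mathrm{out}}$. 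First I would record the parity consequences. Since Lemma~\ref{k-a-b+1} gives $(k-2a+1)/2\in\mathbb{Z}$, the integer $k$ is odd; as $k=a\,|O_{\mathrm{out}}|$ by Lemma~\ref{abconstant-directed} and $k=c\,|O_{\mathrm{in}}|$ for the corresponding in-constant $c$ of Lemma~\ref{abconstant}, the numbers $a,c,|O_{\mathrm{in}}|,|O_{\mathrm{out}}|$ are all odd, and in particular $(k-2a+1)/2\geq 1$.

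Next I would analyse the in-neighbourhood of $\gamma$. Using $\Gamma^-(\gamma)=\alpha^{G_\gamma}\subseteq\alpha^{W_\gamma}=(\delta^{H_{\delta_1'}}\times\delta^{H_{\delta_2'}})\cup(\delta^{H_{\delta_2'}}\times\delta^{H_{\delta_1'}})$ together with Corollary~\ref{gamma-nbrs} and the strictly positive counts $|\Gamma^-(\gamma)\cap\Gamma^+(\alpha)|=|\Gamma^-(\gamma)\cap\Gamma^-(\alpha)|=(k-2a+1)/2$ from Lemma~\ref{more-k-a-b+1}, I would show that $\Gamma^-(\gamma)$ genuinely meets both $O_{\mathrm{in}}\times O_{\mathrm{in}}$ and $O_{\mathrm{out}}\times O_{\mathrm{out}}$. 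Reading off first coordinates, this forces the orbit $\delta^{H_{\delta_1'}}$ (for $\delta_1'\in O_{\mathrm{out}}$) to meet both $O_{\mathrm{in}}$ and $O_{\mathrm{out}}$, in sharp contrast to the one-square situation of Lemma~\ref{delta-orbits-equal}; the dichotomy of Lemma~\ref{square} further gives $\delta^{H_{\delta_1'}}\setminus\{\delta\}\subseteq O_{\mathrm{in}}\cup O_{\mathrm{out}}$. A symmetric analysis at $\beta$ controls $\delta^{H_{\delta_1}}$ for $\delta_1\in O_{\mathrm{in}}$.

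With these orbit inclusions in hand, I would run a connectivity argument in the spirit of Lemma~\ref{one-square-2-trans}: the previous step shows that every in- and out-neighbour of a vertex whose two coordinates lie in $\Sigma=\{\delta\}\cup O_{\mathrm{in}}\cup O_{\mathrm{out}}$ again has both coordinates in $\Sigma$, so the vertices supported on $\Sigma$ form a union of connected components. Since $\Gamma$ is connected, this forces $\Delta=\Sigma$, so that $H$ has rank $3$ on $\Delta$ and $|\Delta|=1+|O_{\mathrm{in}}|+|O_{\mathrm{out}}|$ is odd. Finally I would extract the contradiction from this rigid configuration: the two non-trivial suborbits $O_{\mathrm{in}}$ and $O_{\mathrm{out}}$ are interchanged by arc-reversal, so an associated orbital digraph is a symmetric tournament on which a point-stabiliser-respecting subgroup of $H$ acts arc-transitively, and Lemma~\ref{tournamentprel} then forces the socle $T$ to act regularly on $\Delta$, contradicting Definition~\ref{def}(i). (Equivalently, one produces an explicit element of $G$ reversing an arc, contradicting the standing assumption that $\Gamma$ is directed.)

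The hard part will be the connectivity closure and the terminal contradiction. Without the $2$-transitivity exploited in the one-square case, tracking the orbits $\delta^{H_{\delta'}}$ across the two distinct suborbits and pushing the closure through is delicate, since one must rule out escape into further $H_\delta$-orbits. Moreover, because the parity analysis makes $|\Delta|$ odd, the involution-based arc-reversal of Lemma~\ref{one-square-done} does not transfer verbatim, so the final blow must come either from the tournament/regularity reduction above or from a new construction of an arc-reversing automorphism tailored to the rank-$3$ structure.
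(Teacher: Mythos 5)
Your proposal is a strategy outline rather than a proof, and both of the steps you yourself flag as ``the hard part'' are genuinely missing. First, the connectivity closure: you assert that the previous step shows every in- and out-neighbour of a vertex with both coordinates in $\Sigma=\{\delta\}\cup O_{\mathrm{in}}\cup O_{\mathrm{out}}$ again has both coordinates in $\Sigma$, but that step only controls $\Gamma^-(\gamma)$ (and, symmetrically, $\Gamma^+(\beta)$) for vertices adjacent to $\alpha$; it says nothing about the neighbourhoods of an arbitrary vertex supported on $\Sigma$, such as $(\delta_1',\delta_1')$. In the one-square case this closure is exactly what Lemma~\ref{one-square-2-trans} establishes, and its proof leans on Lemma~\ref{delta-orbits-equal}, whose own proof uses the hypothesis $\delta_1^{H_\delta}=\delta_1'^{H_\delta}$ essentially; no two-square analogue is supplied, and one must rule out orbits $\mu^{H_\nu}$ (with $\mu,\nu\in\Sigma$) escaping into further $H_\delta$-orbits. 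Second, the terminal contradiction: the claim that $O_{\mathrm{in}}$ and $O_{\mathrm{out}}$ are ``interchanged by arc-reversal'' --- i.e.\ that the two nontrivial orbitals of $H$ on $\Delta$ are paired with each other rather than each being self-paired --- is never justified, and if they are self-paired there is no tournament and Lemma~\ref{tournamentprel} gives nothing. The fallback of constructing an arc-reversing automorphism must produce an element of $G$, not merely of $W$; in Lemma~\ref{one-square-done} this was possible because $N=T\times T\le G$ together with $|\Delta|$ even supplied an involution of $T$, whereas your own parity computation makes $|\Delta|=1+|O_{\mathrm{in}}|+|O_{\mathrm{out}}|$ odd --- which is precisely why you concede the method ``does not transfer verbatim''. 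Conceding this is not the same as replacing it.

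For contrast, the paper's proof is a short counting argument that needs no closure or rank-$3$ analysis. By Corollary~\ref{gamma-nbrs} and Lemma~\ref{square}, the in-neighbours of $\gamma$ at Hamming distance $1$ from $\alpha$ lie (without loss of generality) in $(\delta_1^{H_\delta}\times\{\delta\})\cup(\{\delta\}\times\delta_1^{H_\delta})$; since $\delta_1^{H_\delta}\neq\delta_1'^{H_\delta}$, these share no rows or columns with the square $\delta_1'^{H_\delta}\times\delta_1'^{H_\delta}$ containing $\Gamma^+(\alpha)$, so the in-neighbours of $\gamma$ inside $\Gamma^+(\alpha)$ fill whole rows or columns in blocks of size $a$ (Lemma~\ref{abconstant-directed}), giving $|\Gamma^-(\gamma)\cap\Gamma^+(\alpha)|=ja$ for some integer $j$. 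Comparing with $|\Gamma^-(\gamma)\cap\Gamma^+(\alpha)|=(k-2a+1)/2$ from Lemma~\ref{more-k-a-b+1} yields $k-2a+1=2ja$; since $a\mid k$, this forces $a\mid 1$, contradicting $a\ge 2$. Your observations up through the parity step are correct and consistent with this, but the divisibility clash is the idea your sketch lacks, and it bypasses both of your unproven steps entirely.
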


\begin{proof}
By Corollary~\ref{gamma-nbrs}, $\Gamma^-(\gamma) \subset \Gamma^+(\alpha)\cup \Gamma^-(\alpha) \cup HD_1(\alpha)$.  So by Lemma~\ref{square}, we can conclude that either
\begin{eqnarray*}
&\Gamma^-(\gamma)\cap HD_1(\alpha)\subseteq (\delta_1^{H_{\delta}} \times \{\delta\}) \cup(\{\delta\} \times \delta_1^{H_{\delta}}), &\text{ or}\\
& \Gamma^-(\gamma)\cap HD_1(\alpha)\subseteq (\delta_1'^{H_{\delta}} \times \{\delta\}) \cup(\{\delta\} \times \delta_1'^{H_{\delta}}).&
\end{eqnarray*}
We assume that the first of these possibilities is true; the other proof is analogous.

Since $\Gamma^-(\gamma) \subset \Gamma^+(\alpha)\cup \Gamma^-(\alpha) \cup HD_1(\alpha)$, our assumption forces the rows and columns of $\delta_1'^{H_{\delta}}\times \delta_1'^{H_{\delta}}$ to be disjoint from the rows and columns of all other in-neighbours of $\gamma$.  
Now $\Gamma^+(\alpha) \subseteq \delta_1'^{H_{\delta}}\times \delta_1'^{H_{\delta}}$, and 
$\gamma$ has either $0$ or $a$ in-neighbours in any of these rows or columns (Lemma~\ref{abconstant-directed} together with vertex-transitivity yield this conclusion), all of which must also be out-neighbours of $\alpha$; and these are all of the in-neighbours of $\gamma$ that are also out-neighbours of $\alpha$.  Hence
we must have $|\Gamma^-(\gamma)\cap\Gamma^+(\alpha)|=ja$ for some $j$.  But Lemma~\ref{more-k-a-b+1} tells us that $|\Gamma^-(\gamma) \cap \Gamma^+(\alpha)|=(k-2a+1)/2$.  So we have $k-2a+1=2ja$, but $a$ divides each of these values with the exception of 1, and we know $a\ge 2$ (by Lemma~\ref{abconstant-directed}), a contradiction.
\end{proof}

\thebibliography{10}
\bibitem{Berg}J.~L.~Berggren, An algebraic characterization of finite
  symmetric tournaments, \textit{Bull. Austral. Math. Soc.} \textbf{6}
  (1972), 53--59.  

\bibitem{Cam1}P.~J.~Cameron, Proofs of some theorems of W.~A.~Manning, \textit{Bull. London 
  Math. Soc.}, \textbf{1} (1969), 349--352.

\bibitem{Cam2}P.~J.~Cameron, Bounding the rank of certain permutation groups, \textit{Math. Z.}, \textbf{124} (1972), 343--352.

\bibitem{Cam3}P.~J.~Cameron, Permutation groups with multiply transitive suborbits, \textit{Proc. London Math. Soc.}, \textbf{25} (1972), 427--440.

\bibitem{Peter}P.~J.~Cameron, {\em Permutation groups}, London
  Mathematical Society, Student Texts 45, 1999.

\bibitem{ATLAS}J.~H.~Conway, R.~T.~Curtis, S.~P.~Norton, R.~A.~Parker,
  R.~A.~Wilson, {\em Atlas of finite groups}, Clarendon Press, Oxford, 1985.

\bibitem{DM}J.~D.~Dixon, B.~Mortimer, {\em Permutation groups},
  Graduate texts in mathematics \textbf{163}, Springer-Verlag New
  York, 1996.
 
\bibitem{Pr1}M.~Giudici, C.~H.~Li, C.~E.~Praeger, Analysing finite locally $s$-arc transitive graphs, \textit{Trans. Amer. Math. Soc.} \textbf{356} (2004), 291--317.

\bibitem{GW}D.~Gorenstein, J.~H.~Walter, The characterization of
  finite groups with dihedral Sylow $2$-subgroups, \textit{J. Algebra}
  \textbf{2} (1965), 85--151, 218--270, 354--393. 

\bibitem{LS}C.~H.~Li, \'A.~Seress, personal communication.

\bibitem{Pr2}J.~Morris, C.~E.~Praeger, P.~Spiga, Strongly regular edge-transitive graphs, \textit{Ars Math. Contemp.} \textbf{2} (2009), 137--155.

\bibitem{arxiv}J.~Morris, P.~Spiga, $2$-distance-transitive digraphs preserving a cartesian decomposition, \texttt{arXiv:1203.6386v1}.

\bibitem{Praeger}C.~E.~Praeger, Imprimitive symmetric graphs, \textit{Ars Combin.} \textbf{19A} (1985),  149--163.

\bibitem{PSY}C.~E.~Praeger, J.~Saxl, K.~Yokoyama, Distance transitive
  graphs and finite simple groups, \textit{Proc. London
    Math. Soc. (3)} \textbf{55} (1987), 1--21.

\bibitem{PS}C.~E.~Praeger, C.~Schneider, Permutation groups and cartesian decompositions, in preparation.

\bibitem{Suz}M.~Suzuki, {\em Group Theory II}, Grundlehren der
  mathematischen Wissenschaften \textbf{248}, Springer-Verlag.

\bibitem{vanBon thesis} J.~van Bon, Affine distance-transitive groups, \textit{Ph.D. thesis}, Universiteit Utrecht, 1990.

\bibitem{vanBon}J.~van Bon,  Finite primitive distance-transitive graphs,
\textit{European J. Combin.} \textbf{28} (2007), 517--532. 

\bibitem{W}J.~H.~Walter, The characterization of finite groups with
Abelian Sylow $2$-subgroups, \textit{Ann. of Math. (2)} \textbf{89}
(1969), 405--514.

\bibitem{Wie}H.~Wielandt, \textit{Finite permutation groups}, Academic Press, New York, $1964$.

\end{document}